\newcommand{\RR}{{\mathbb R}}
\newcommand{\EE}{{\mathbb E}}
\newcommand{\ee}{{\tilde{e}}}
\newcommand{\LL}{{\mathbb L}}
\newcommand{\PP}{{\mathbb P}}
\renewcommand{\AA}{{\tilde{A}}}
\newcommand{\QQ}{{\tilde{Q}}}
\newcommand{\WW}{{\tilde{W}}}
\newcommand{\GG}{{\tilde{G}}}
\newcommand{\KK}{{\tilde{K}}}
\renewcommand{\SS}{{\tilde{S}}}
\numberwithin{equation}{section}
\newtheorem{theorem}{Theorem}[section]
\newtheorem{lemma}[theorem]{Lemma}
\newtheorem{prop}[theorem]{Proposition}
\begin{document}
\title[Strong Convergence of time Euler schemes for 2D Boussinesq]
{Speed of convergence of time Euler schemes  
 \\ for a stochastic  2D Boussinesq model}%: \\ 

%%%%%%%%%%%%%%%%%%%
\author[H. Bessaih]{Hakima Bessaih}
\address{Florida International University, Mathematics and Statistics Department, 11200 SW 8th Street, Miami, FL 33199, United States}
\email{hbessaih@fiu.edu}

\author[A. Millet]{ Annie Millet}
\address{SAMM, EA 4543,
Universit\'e Paris 1 Panth\'eon Sorbonne, 90 Rue de
Tolbiac, 75634 Paris Cedex France {\it and} Laboratoire de
Probabilit\'es, Statistique et Mod\'elisation, UMR 8001, 
  Universit\'es Paris~6-Paris~7} 
\email{amillet@univ-paris1.fr}

%\thanks{Hakima Bessaih was partially supported by Simons Foundation grant: 582264 and NSF grant DMS: %2147189.}  

%{\it MSC2020:} Primary 60H15, 60H35 65M12; Secondary 76D03, 76M35.
\subjclass[2000] { Primary  60H15, 60H35, 65M12; Secondary 76D03, 76M35.}
% \MSC[2020]{Primary: 60H15 \sep  60H35  \sep 65M12 \sep Secondary: 76D03 \sep 76M35 }

\keywords{Boussinesq model, implicit time Euler schemes, 
convergence in probability, strong convergence}

\begin{abstract}
 We prove that an implicit time Euler scheme for the 2D-Boussinesq model on the torus $D$ converges.
Various moment of the $W^{1,2}$-norms of the velocity and temperature,   as well as   their discretizations,  are computed. 
We obtain the optimal speed of convergence in probability, and a logarithmic speed of convergence in $L^2(\Omega)$. These results are
deduced from a time regularity of the solution both in $L^2(D)$ and $W^{1,2}(D)$,  and from an $L^2(\Omega)$ convergence restricted to a subset
where the $W^{1,2}$-noms of the solutions are bounded. 
\end{abstract}

\maketitle

%%%%%%%%%%%%%%%%%%%%%%%%%%%%%%%%%%%%%%%%%%%%%%%%%%%%%%%%
\section{Introduction}\label{s1} \smallskip

The Boussinesq equations have been used as a model in many geophysical applications. They have been widely studied in a both
 the deterministic and stochastic settings. 
 We take random forcings into account and formulate the B\'enard convection problem as a system of stochastic partial differential equations (SPDEs). 
 The need to take stochastic effects into account for modeling complex systems has now become widely recognized. Stochastic 
partial differential equations (SPDEs) arise naturally as mathematical models for nonlinear macroscopic dynamics under random influences.
 The Navier-Stokes equations are coupled with a transport equation for the temperature and with diffusion. 
 Here, the system will be subject to a multiplicative random perturbation which will be defined later. 
Here, $u$ describes the fluid velocity field, while $\theta$ describes the temperature  of the buoyancy  driven  fluid, and $\pi$ is the  fluid's  pressure.

We study the multiplicative stochastic Boussinesq equations
\begin{align} \label{velocity}
 \partial_t u - \nu \Delta u + (u\cdot \nabla) u + \nabla \pi & = \theta + G(u) \,  dW\quad \mbox{\rm in } \quad (0,T)\times D,\\
 \partial_t \theta - \kappa \Delta \theta + (u\cdot \nabla \theta) &= \tilde{G} (\theta)\,  d\tilde{W} \quad \mbox{\rm in } \quad (0,T)\times D, 
 \label{temperature}\\
 \mbox{\rm div }u&=0 \quad \mbox{\rm in } \quad (0,T)\times D,  \nonumber 
 \end{align}
 where  $T>0$. The processes % and  $D_T=(0,T)\times D$. 
$u: \Omega\times (0,T)\times D  \to \RR^2$, and $\theta: \Omega\times (0,T)\times D  \to \RR$ 
 have  initial conditions $u_0$ and $ \theta_0$ in $D$ respectively. 
The  parameters $\nu>0$ denotes  the  kinematic viscosity of the fluid, and $\kappa>0$ its thermal diffusivity.
These fields satisfy 
  periodic boundary conditions $u(t,x+L v_i)=u(t,x)$, $\theta (t,x+L v_i) = \theta(t,x)$ on  $(0,T)\times \partial D$, 
 where $v_i$, $i=1,2$
 denotes the canonical basis of $\RR^2$, and $\pi : \Omega\times (0,T)\times D  \to \RR$  is  the  pressure.
 % $\nu$ and $\kappa$ are strictly positive constants. 

 In dimension 2 without any stochastic perturbation, this system has been extensively studied with a complete picture
  about its well posedness and longtime behavior.
 In the deterministic setting,  more investigations have been extended to the cases where $\nu=0$ and/or $\kappa=0$ with some partial results.
  
  If the $(L^2)^2$ (resp. $L^2$) norms of $u_0$ and $\theta_0$ are square integrable, it is known that the random system 
 \eqref{velocity}--\eqref{temperature} is well-posed, 
 and that there exists a unique solution $(u\times \theta)$ in
 $C([0,T];(L^2)^2 \times L^2) \cap L^2(\Omega ; (H^1)^2 \times H^1)$; see e.g. \cite{ChuMil} and \cite{DuaMil}. 

Numerical schemes and algorithms have been introduced to best approximate the solution to non-linear PDEs. The time approximation
is either an implicit Euler or a time splitting scheme coupled with Galerkin approximation or finite elements to approximate the space variable. 
The literature on numerical analysis for SPDEs is now very extensive. 
%There is an extensive literature, see 
In many papers  the models are either linear, have global Lipschitz properties,
 or more generally some monotonicity property. In this case the convergence is proven to be in mean square. 
When nonlinearities are involved that are not of Lipschitz or monotone type,  then a rate of  convergence 
 in mean square is more difficult to  obtain.  
  Indeed, because of the stochastic perturbation, one may not use the Gronwall lemma after taking the expectation of the
   error bound   
  since it involves a nonlinear term which is often  quadratic; such a nonlinearity requires some localization.

In a random setting, the discretization of the Navier-Stokes equations on the torus has been intensively investigated.  
Various space-time numerical schemes have been studied for the stochastic Navier-Stokes equations with a multiplicative or an additive noise, that is 
where in the
right hand side of  \eqref{velocity} (with no $\theta$) we have either $G(u) \, dW$  or $dW$.
 % by, where $G$ is a Lipschitz function with at most linear growth.
 We refer to \cite{BrCaPr,Dor, Breckner, CarPro, BreDog}, where convergence in probability is stated with various rates of convergence
  in a multiplicative setting for a time Euler scheme, and \cite{BeBrMi} for a time splitting scheme. 
 As stated previously, the main tool to get the convergence in probability  is the localization of the nonlinear term over a space of large probability.
  % Our previous paper \cite{Be-Mi_time} describes most of the results contained in these papers. 
  We studied the strong (that is  $L^2(\Omega)$) rate of convergence of the time  implicit Euler scheme
   (resp. space-time implicit Euler scheme coupled with
  finite element space discretization) in our previous papers \cite{BeMi_multi}  (resp.  \cite{BeMi_FEM}) for an $H^1$-valued initial condition. 
 % The first result on an strong, that is $L^2(\Omega)$-convergence, rate is proved in  \cite{Be-Mi_time} for an $H^1$-valued initial condition. 
  The method is based on the fact that the solution (and the scheme) have finite  moments (bounded uniformly on the mesh). 
  For a general multiplicative noise, the rate is logarithmic. When the diffusion coefficient is bounded (which is a slight extension of an additive noise),
  the supremum of the $H^1$-norm of the solution has exponential  moments;  we used this property in \cite{BeMi_multi} and \cite{BeMi_FEM} 
  to  get an explicit polynomial strong rate
   of convergence. However, this rate depends on the viscosity and the strength of the noise, and is strictly less than 1/2 for the time parameter 
   (resp. than $1$ for the
   spatial one). For a given viscosity, the time rates on convergence increase to 1/2 when the strength of the noise converges to 0. For an additive noise,
  if the strength of the noise is not too large,  the strong ($L^2(\Omega)$) rate of convergence in time is the optimal one, that is almost $1/2$ (see
  \cite{BeMi_additive}).  Once more this is based on exponential moments of the supremum of the
  $H^1$-norm of the solution (and of its  scheme for the space discretization); this enabled us to have strong polynomial time rates. 
\smallskip

In the current paper, we study the time approximation of the Boussinesq equations \eqref{velocity}-\eqref{temperature} in a multiplicative setting.  
 To 
the best of our knowledge,  it is the first result where a time-numerical scheme is implemented 
for a more general hydrodynamical model with a multiplicative noise.
We use a fully implicit time Euler scheme and once more have to assume that the initial conditions $u_0$ and $\theta_0$ belong to $H^1(D)$
in order to prove a rate of convergence in $L^2(D)$ uniformly in time. We prove the existence of finite moments of the $H^1$-norms of the 
velocity and the temperature uniformly in time. Since we are on the torus, this is quite easy for the velocity. However, for the temperature, due
to the presence of the velocity in the bilinear term,  the
argument is more involved and has to be done in two steps. It requires higher moments on the $H^1$-norm of the initial condition. 
The time regularity  of the solutions $u, \theta$ is the same as that of  $u$ in the Navier-Stokes equations.
We then study rates of convergence in probability and in $L^2(\Omega)$. 
 The rate of convergence in probability is  optimal (almost $1/2$); we have to impose higher moments on the initial conditions
than what is needed for the velocity described by a stochastic Navier-Stokes equations. Once more we first obtain an $L^2(\Omega)$-convergence
on a set where we bound the $L^2$ norm of the gradients of both the velocity and the temperature. We deduce an optimal rate of convergence
in probability, that is strictly less than 1/2. When $H^1$-norm of the initial condition has all moments  (for example it is a Gaussian  $H^1$-valued 
random variable), the rate of convergence in $L^2(\Omega)$ is any negative
exponent of the logarithm of the number of time steps. These results extend those established for the Navier-Stokes equations subject to a 
multiplicative stochastic perturbation. 

 The paper is organized as follows. In section \ref{preliminary} we describe the model, the assumptions on the noise and the diffusion coefficients, and describe the fully implicit time Euler scheme. 
In section \ref{main} we state the global well-posedeness of the solution to  \eqref{velocity}--\eqref{temperature},
  moment estimates  of  the gradient of $u$ and $\theta$ uniformly in time, the existence of the scheme. We then formulate the main results
 of the paper about the rates of convergence in probability and  in $L^2(\Omega)$ of the scheme to the solution. 
%recall the known results about global well posedeness and moment estimates for $L^2(D)$ initial conditions.
 In section \ref{s_regularity_ut}  we prove  moment estimates in $H^1$  of $u$ and $\theta$ uniformly on the time interval $[0,T]$
  if we start with  more
regular ($H^1$) initial conditions. This is essential to be able
to deduce a rate of convergence from the localized result. Section \ref{s-increments}  states time regularity
results of the solution $(u,\theta)$ both in $L^2(D)$ and $H^1(D)$; this a crucial ingredient of the final results.
 In section \ref{sEuler} we prove that the time Euler scheme  is well defined 
and prove its moment estimates in $L^2$ and $H^1$. Section \ref{s-loc-converg} deals with the localized convergence of the scheme 
 in $L^2(\Omega)$. This preliminary step is necessary due to the bilinear term, which requires some control of the $H^1$ norm of $u$ and $\theta$.
  In section \ref{sec_speed} we prove
the rate of convergence in probability and in $L^2(\Omega)$.  Finally, section \ref{sconclusion} summarizes the interest of the model, 
and describes some further necessary/possible extensions of this work.

As usual, except if specified otherwise, $C$ denotes a positive constant that may change  throughout the paper,
 and $C(a)$ denotes  a positive constant depending on some parameter $a$. % By convention, if $x>0$ we set $x^0=1$. 

\section{Preliminaries and assumptions}\label{preliminary} 
In this section, we describe the functional framework, the driving noise, the evolution equations, 
and the fully implicit time Euler scheme. %and state known global well-posedness results for square integrable initial conditions. 

\subsection{The functional framework}\label{frame}
Let  $D= [0,L]^2$  with periodic boundary conditions, 
${\mathbb L}^p:=L^p(D)^2$ (resp. ${\mathbb W}^{k,p}:=W^{k,p}(D)^2$)   be  the usual Lebesgue and Sobolev spaces of 
vector-valued functions
endowed with the norms  $\|\cdot \|_{{\mathbb L}^p}$  (resp. $\|\cdot \|_{{\mathbb W}^{k,p}}$). 

Let $V^0:=\{ u\in \LL^2 : {\rm div}(u)=0\;  {\rm on} \; D\}$.    Let  $\Pi : \mathbb{L} ^2 \to V^0$ denote the Leray projection, 
and let   $A=- \Pi \Delta$ denote the Stokes operator, with  domain  
  $\mbox{\rm Dom}(A)={\mathbb W}^{2,2}\cap V^0 $. 

Let $\tilde{A} = -\Delta$ acting on $L^2(D)$.  For any non negative real number $k$ let
\[  H^k={\rm Dom}\big(\tilde{A}^{\frac{k}{2}}\big) , \;   V^k={\rm Dom} \big(A^{\frac{k}{2}}\big),\; \mbox{\rm endowed with the norms }
 \|\cdot\|_{H^k} \; \mbox{\rm and } \|\cdot\|_{V^k}.\]
Thus $H^0=L^2(D)$  and $H^k=W^{k,2}$. 
 Moreover,   let  $V^{-1}$ be the dual space of $V^1$ 
   with respect to the  pivot space $V^0$,  and 
  $\langle\cdot,\cdot\rangle$ denotes the duality between $V^1$ and $V^{-1}$.

 % Recall that the Poincar\'e inequality implies the existence of a positive constant $\tilde{c}$ such that
  %\begin{equation} 				\label{Poincare_H}
  %|u|_{L^2} \leq \tilde{c} |\nabla u|_{L^2}, \quad \forall u\in V. 
  %\end{equation}
  
  Let $b:(V^1)^3 \to \RR$ denote the trilinear map defined by 
  \[ b(u_1,u_2,u_3):=\int_D  \big(\big[ u_1(x)\cdot \nabla\big] u_2(x)\big)\cdot u_3(x)\, dx. \]
  The incompressibility condition implies   $b(u_1,u_2, u_3)=-b(u_1,u_3,u_2)$  for $u_i \in V^1$, $i=1,2,3$. 
  There exists a continuous bilinear map $B:V^1\times V^1 \mapsto
  V^{-1}$ such that
  \[ \langle B(u_1,u_2), u_3\rangle = b(u_1,u_2,u_3), \quad \mbox{\rm for all } \; u_i\in V^1, \; i=1,2,3.\]
  Therefore, the map  $B$ satisfies the following antisymmetry relations:
  \begin{equation} \label{B}
  \langle B(u_1,u_2), u_3\rangle = - \langle B(u_1,u_3), u_2\rangle , \quad \langle B(u_1,u_2),  u_2\rangle = 0 
  \qquad \mbox {\rm for all } \quad u_i\in V^1.
  \end{equation}
  For $u,v\in V^1$,  we have $B(u,v):= \Pi \big( \big[ u\cdot \nabla\big]  v\big) $.
  
  Furthemore, since $D=[0,L]^2$ with periodic boundary conditions, we have (see e.g. \cite{Temam-84})
  \begin{equation}		\label{B(u)Au}
  \langle B(u,u), Au\rangle =0, \quad \forall  u\in V^2.
  \end{equation}
  Note that for $u\in V^1$ and $\theta_1, \theta_2\in H^1$, if $(u.\nabla)\theta = \sum_{i=1,2} u_i \partial_i \theta$, we have
  \begin{equation} 	\label{antisym-ut}
   \langle [u.\nabla]\theta_1\, , \, \theta_2 \rangle = - \langle [u.\nabla]\theta_2\, , \, \theta_1\rangle,
   \end{equation}
  so that $\langle [u.\nabla]\theta\, , \, \theta \rangle =0$ for $u\in V^1$ and $\theta\in H^1$.

  In dimension 2 the inclusions  $H^1\subset L^p$  and $V^1\subset \LL^p$  for $p\in [2,\infty)$ follow from the Sobolev embedding theorem. 
  More precisely 
  the following Gagliardo Nirenberg inequality is true for some constant $\bar{C}_p$
    \begin{equation} \label{GagNir}
  \|u\|_{\LL^p} \leq \bar{C}_p \;   \|A^{\frac{1}{2}}  u\|_{\LL^2}^{\alpha}  \|u\|_{\LL^2}^{1-\alpha}  \quad \mbox{\rm for} \quad
    \alpha = 1-\frac{2}{p}, \quad \forall u\in V^1.
  \end{equation} 
% In particular $\|u\|_{\LL^4} \leq \bar{C}_4 \; \|u\|_{\LL^2}^{\frac{1}{2}}  \, \|\nabla u\|_{\LL^2}^{\frac{1}{2}} $. 

Finally, let us recall the following estimate of the bilinear terms $(u.\nabla)v$ and $(u.\nabla)\theta$.
\begin{lemma} \label{GiMi}
Let $\alpha,  \rho$ be positive numbers and $\delta \in [0,1)$ be such that  $\delta +\rho > \frac{1}{2}$ and $\alpha + \delta +\rho \geq 1$. 
 Let  $u\in V^\alpha$,  $v\in V^\rho$ and $\theta \in H^\rho$; then 
\begin{align} 	\label{GiMi-uv}
\| A^{-\delta} \Pi [(u.\nabla) v] \|_{V^0} &\leq C \| A^\alpha u\|_{V^0}\, \|A^\rho v\|_{V^0}, \\
\| \tilde{A}^{-\delta} [(u.\nabla) \theta]\|_{H^0} &\leq C \| A^\alpha u\|_{V^0}\, \|\tilde{A}^\rho \theta\|_{H^0}, \label{GiMi-ut}
\end{align} 
   for some positive constant $C:= C(\alpha, \delta, \rho)$.
\end{lemma}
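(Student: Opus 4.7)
\emph{Proof plan.} The two estimates \eqref{GiMi-uv} and \eqref{GiMi-ut} will be proved in exactly the same way, so I would focus on \eqref{GiMi-uv}; for \eqref{GiMi-ut} one simply replaces $A$ by $\tilde A$, uses the scalar antisymmetry \eqref{antisym-ut} in place of \eqref{B}, and drops the Leray projection. The natural starting point is duality: since $A^{-\delta}$ is self-adjoint on $V^0$ and $\Pi$ is the orthogonal projection onto $V^0$,
\begin{equation*}
\|A^{-\delta}\Pi[(u\cdot\nabla)v]\|_{V^0}
\;=\; \sup\bigl\{ |b(u,v,\phi)| : \phi\in V^{2\delta},\ \|A^{\delta}\phi\|_{V^0}\le 1\bigr\}.
\end{equation*}
It therefore suffices to bound $|b(u,v,\phi)|$ by $C\,\|A^{\alpha}u\|_{V^0}\,\|A^{\rho}v\|_{V^0}\,\|A^{\delta}\phi\|_{V^0}$ for every such test function $\phi$.

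I would then split into cases according to which factor can absorb a derivative. When $2\rho>1$ or $2\delta>1$, so that $\nabla v$ or $\nabla\phi$ lives in some Lebesgue space, the estimate reduces to H\"older's inequality with exponents $1/p+1/q+1/r=1$ combined with the Gagliardo--Nirenberg inequality \eqref{GagNir}; a short bookkeeping of the admissible Sobolev exponents $1/p\ge \tfrac12-\alpha$, $1/q\ge 1-\rho$, $1/r\ge \tfrac12-\delta$ (or the symmetric choice obtained after the integration by parts $b(u,v,\phi)=-b(u,\phi,v)$ that \eqref{B} legitimizes) shows that the sum of lower bounds $2-\alpha-\rho-\delta$ does not exceed $1$ iff $\alpha+\rho+\delta\ge 1$. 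The delicate regime is $2\rho<1$ and $2\delta<1$ simultaneously, in which neither derivative lies in any $L^p$. Here I would use $\operatorname{div}u=0$ to write $(u\cdot\nabla)v=\operatorname{div}(u\otimes v)$ and appeal to the Sobolev product rule in dimension two, $H^{s_1}\cdot H^{s_2}\hookrightarrow H^{s_3}$ whenever $s_3\le\min(s_1,s_2)$ and $s_1+s_2-s_3>1$. Applied either to $u\otimes v$ paired with $\nabla\phi$, or to $u\,\phi$ paired with $\nabla v$, these conditions reduce exactly to $\rho+\delta>1/2$ together with $\alpha+\rho+\delta\ge 1$, which are the standing hypotheses; the pairing is legal because, as a short algebraic check shows, $\alpha+\rho+\delta\ge 1$ combined with $\rho<1/2$ and $\delta<1/2$ automatically forces both $\alpha+\delta\ge 1/2$ and $\alpha+\rho\ge 1/2$, so at least one of the two product groupings satisfies the multiplication rule.

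The main technical obstacle is precisely this low-regularity case: elementary H\"older plus Sobolev embedding fails there because the single derivative falls on a factor of negative Sobolev order, and one has to invoke the Sobolev multiplication theorem (or, equivalently, a paraproduct decomposition) to close the argument. The critical endpoint $\alpha+\rho+\delta=1$ is recovered by an $\varepsilon$-loss argument that can be absorbed in the strict inequality $\rho+\delta>1/2$. Once these ingredients are in place, the passage back from the trilinear estimate to \eqref{GiMi-uv} is just the duality identity above, and \eqref{GiMi-ut} follows verbatim in the scalar setting.
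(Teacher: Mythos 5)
Your proposal is correct in outline, but it is worth noting that the paper does not actually prove this lemma: it cites \eqref{GiMi-uv} as Lemma~2.2 of Giga--Miyakawa \cite{GigMiy} and merely asserts that the same Sobolev-embedding/H\"older argument gives the scalar variant \eqref{GiMi-ut}. What you have written is, in effect, a self-contained reconstruction of that cited result, and it follows the same underlying mechanism (duality against a test function $\phi$ with $\|A^{\delta}\phi\|_{V^0}\le 1$, H\"older, and the 2D embeddings), so there is no conflict of method --- you are filling in what the paper outsources. Your case analysis is sound: the exponent bookkeeping $(\tfrac12-\alpha)+(1-\rho)+(\tfrac12-\delta)\le 1$ correctly identifies $\alpha+\rho+\delta\ge 1$, and your observation that $\rho<\tfrac12$, $\delta<\tfrac12$ together with $\alpha+\rho+\delta\ge 1$ force $\alpha+\delta>\tfrac12$ and $\alpha+\rho>\tfrac12$ is exactly the algebraic fact that makes the $\operatorname{div}(u\otimes v)$ grouping admissible in the low-regularity regime. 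The one place where your write-up is thinner than it should be is the endpoint $\alpha+\rho+\delta=1$ with $2\rho<1$ and $2\delta<1$: there the product rule $H^{2\alpha}\cdot H^{2\rho}\hookrightarrow H^{1-2\delta}$ sits exactly at the critical index $s_1+s_2-s_3=n/2=1$, and the vague appeal to ``an $\varepsilon$-loss absorbed in $\rho+\delta>\tfrac12$'' should be replaced by the precise observation that the critical-index product estimate is valid here because none of the exponents $2\alpha$, $2\rho$, $-(1-2\delta)$ can equal $n/2=1$ (indeed $2\alpha=1$ would force $\rho+\delta=\tfrac12$, contradicting the standing strict inequality). With that point made explicit, your argument is a complete proof of both \eqref{GiMi-uv} and \eqref{GiMi-ut}.
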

\begin{proof}
The upper estimate \eqref{GiMi-uv} is Lemma 2.2 in \cite{GigMiy}. The argument, which is based on the Sobolev embeding theorem and
H\"older's inequality, clearly proves \eqref{GiMi-ut}. 
\end{proof}

\subsection{The stochastic perturbation}
Let $K$ (resp. $\tilde K$) be a Hilbert space 
 and let $(W(t), t\geq 0)$ (resp. $(\tilde{W}(t), t\geq 0)\, )$ be a $K$-valued (resp. $\tilde{K}$- valued) 
 Brownian motion with covariance $Q$ (resp. $\tilde{Q}$),
 that is a trace-class operator of $K$ (resp. $\tilde{K}$)  such that $Q \zeta_j = q_j \zeta_j$ (resp. $\tilde{Q} \tilde{\zeta}_j = \tilde{q}_j \tilde{\zeta}_j$), where
  $\{ \zeta_j\}_{j\geq 0}$ (resp. $\{ \tilde{\zeta}_j\}_{j\geq 0}$) is a complete orthonormal system of $K$ (resp. $\tilde{K}$), $q_j, \tilde{q}_j >0$,
  and ${\rm Tr}(Q)=\sum_{j\geq 0} q_j <\infty$
  (resp. ${\rm Tr}(\tilde{Q})=\sum_{j\geq 0} \tilde{q}_j <\infty$). Let $\{ \beta_j \}_{j\geq 0}$ (resp. $\{ \tilde{\beta}_j\}_{j\geq 0}$) be a sequence of independent
  one-dimensional Brownian motions on the same filtered probability space $(\Omega, {\mathcal F}, ({\mathcal F}_t, t\geq 0), \PP)$. Then
  \[ W(t)=\sum_{j\geq 0} \sqrt{q_j}\, \beta_j(t) \,\zeta_j, \quad \tilde{W}(t)=\sum_{j\geq 0} \sqrt{\tilde{q}_j }\, \tilde{\beta}_j \,\tilde{\zeta}_j.\] 
   For details concerning these Wiener processes  we refer  to \cite{DaPZab}.

 Projecting the velocity on divergence free fields,  we consider the following SPDEs for processes modeling  the velocity  $u(t)$ 
 and the temperature $\theta(t) $. The initial conditions $u_0$ and $\theta_0$ are ${\mathcal F}_0$-measurable, taking values in  
  $V^0$ and $H^0$ respectively, and
 % where the driving noises $W, \WW$  and the diffusion coefficients $G, \GG$ will be precised later
\begin{align}
\partial_t u(t) + \big[ \nu\, A u(t) + B(u(t),u(t)) \big] dt = &\, \Pi (\theta(t) v_2) + G(u(t)) \, dW(t), \label{def_u} \\
%\quad {\rm div} (u)=0 \;\; {\rm on }\;\;   D,		\label{divu}\\
\partial_t \theta(t) + \big[\kappa \,\AA \theta(t) + (u(t).\nabla) \theta(t)\big] dt =&\, \tilde{G}(\theta(t)) \, d\WW(t), 	\label{def_t}
\end{align}
$\nu,\kappa$ are strictly positive constants, and  $v_2=(0,1)\in \RR^2$.

 We make the following classical linear growth and Lipschitz assumptions on the diffusion coefficients $G$ and $\GG$.
 For technical reasons, we will have to require $u_0\in V^1$, $\theta\in H^1$ and prove estimates similar to \eqref{mom_u_L2}--\eqref{mom_t_L2}
raising the space regularity of the processes by one step in the scale of Sobolev spaces. Therefore, we have to strengthen the regularity
 of the diffusion coefficients. \\
 {\bf Condition (C-u) (i)} Let $G:V^0\to {\mathcal L}(K;V^0)$ be such that
 \begin{align}  \|G(u)\|^2_{{\mathcal L}(K,V^0)} \leq&\,  K_0 + K_1 \|u\|^2_{V^0}, \quad \forall u\in V^0, 	\label{growthG-0}\\
  \|G(u_1)-G(u_2)\|^2_{{\mathcal L}(K,V^0)} \leq &\,  L_1  \|u_1-u_2|^2_{V^0} , \quad  \forall u_1,u_2\in V^0.	\label{LipG}
 \end{align}
{\bf  (ii)} Let also   $G:V^1\to {\mathcal L}(K;V^1)$ satisfy the growth condition
\begin{equation}		\label{growthG-1}
\| G(u)\|_{{\mathcal L}(K;V^1)}^2 \leq K_2 + K_3 \|u\|_{V^1}^2, \quad \forall u\in V^1.
\end{equation}

and\\

\noindent {\bf Condition  (C-$\theta$) (i)} Let $\GG:H^0\to {\mathcal L}(\KK;H^0)$ be such that
 \begin{align}  \|\GG(\theta)\|^2_{{\mathcal L}(\KK,H^0)} \leq&\,  \tilde{K}_0 + \tilde{K}_1 \|\theta\|^2_{H^0}, \quad \forall \theta \in H^0, 	
 \label{growthGG-0}\\
  \|\GG(\theta_1)-\GG(\theta_2)\|^2_{{\mathcal L}(\KK,H^0)} \leq &\,  \tilde{L}_1  \|\theta_1-\theta_2 \|^2_{H^0} , \quad \forall \theta_1, \theta_2\in H^0.	\label{LipGG}
 \end{align}
 {\bf (ii)} Let also $\GG:H^1\to {\mathcal L}(K;H^1)$
 satisfy the growth condition
 \begin{equation}		\label{growthGG-1}
\| \GG(\theta)\|_{{\mathcal L}(\tilde{K};H^1)}^2 \leq \KK_2 + \KK_3 \|\theta\|_{H^1}^2, \quad \forall \theta\in H^1.
\end{equation}

 \subsection{The fully implicit time Euler scheme} \label{ssEuler}
 %We  first prove the existence of the fully time implicit time Euler scheme. 
 Fix $N\in \{1,2, ...\}$, let $h:=\frac{T}{N}$ denote the time mesh,
 and for $j=0, 1, ..., N$ set $t_j:=j \frac{T}{N}$. 

The fully implicit time Euler scheme $\{ u^k ; k=0, 1, ...,N\}$  and $\{ \theta^k ; k=0, 1, ...,N\}$ is defined by $u^0=u_0$, $\theta^0=\theta_0$,
 and for $\varphi \in V^1$, $\psi\in H^1$ and $l=1, ...,N$, 
\begin{align}	\label{Euler_u}
\Big( u^l-u^{l-1} &+ h \nu A u^l + h B\big( u^l,u^l\big)   , \varphi\Big) = \big(\Pi \theta^{l-1} v_2, \varphi) h  \nonumber \\
& + \big( G(u^{l-1}) [W(t_l)-W(t_{l-1})]\, , \, \varphi),  \\
\Big( \theta^l-\theta^{l-1} &+ h \kappa \AA \theta^l +h [ u^{l-1}. \nabla]\theta^l , \psi\Big) = \big( \GG(\theta^{l-1}) [\WW(t_l)-\WW(t_{l-1})]\, , \, \varphi). 
\label{Euler_t}
\end{align}
 %Set $\Delta_l W:= W(t_l)-W(t_{l-1})$ and $\Delta_l \WW=\WW(t_l)-\WW(t_{l-1})$, $l=1, ...,N$.
\medskip

 \section{Main results} \label{main}
 In this section, we state the main  results about well-posedness of the solutions $(u,\theta)$, the scheme $\{ u^k ; k=0, 1, ...,N\}$, and 
 the rate of convergence of the scheme $\{ (u^k, \theta^k) ; k=0, 1, ...,N\}$  %and $\{ \theta^k ; k=0, 1, ...,N\}$
 to $(u,\theta)$. % and $\theta$. 
 
 \subsection{Global well-posedness and moment estimates of $(u,\theta)$}	\label{gwp}
   The first results states the existence and uniqueness of   a   weak pathwise solution (that is strong probabilistic solution in the weak deterministic sense) 
  of \eqref{def_u}-\eqref{def_t}. 
It is proven in  \cite{ChuMil} (see also \cite{DuaMil}).
 \begin{theorem}		\label{th-gwp}
 Let $u_0\in L^{2p}(\Omega;V^0)$, $\theta_0 \in L^{2p}(\Omega;H^0)$ for $p=1$ or $p\in [2,\infty)$. Let the coefficients $G$ and $\GG$
 satisfy the conditions {\bf (C-u)(i)} and {\bf (C-$\theta$)(i)} respectively. Then equations \eqref{def_u}--\eqref{def_t} have a unique pathwise solution, i.e.,
 \begin{itemize}
 \item  $u $ (resp. $\theta$) is an adapted $V^0$-valued (resp. $H^0$-valued) process which belongs a.s. to $L^2(0,T ; V^1)$
 (resp. to $L^2(0,T;H^1)$),
 \item $\PP$ a.s. we have  $u\in C([0,T];V^0)$, $\theta \in C(0,T];H^0)$, and 
% \item  $\PP\;  \mbox{\rm a.s.}$
  \begin{align*}
  \big(u(t), \varphi\big) +& \nu \int_0^t \big( A^{\frac{1}{2}} u(s), A^{\frac{1}{2}} \varphi\big) ds 
  + \int_0^t \big\langle [u(s) \cdot \nabla]u(s), \varphi\big\rangle ds \\
 &    = \big( u_0, \varphi) + \int_0^t \big( \Pi \theta(t) v_2,\varphi) ds + \int_0^t \big( \varphi ,  G(u(s)) dW(s) \big),\\
  \big(\theta(t), \psi\big) +& \kappa \int_0^t \big( \AA^{\frac{1}{2}} \theta(s), \AA^{\frac{1}{2}} \psi\big) ds 
  + \int_0^t \big\langle [u(s) \cdot \nabla]\theta(s), \psi\big\rangle ds \\
 &    = \big( \theta_0, \psi) + \int_0^t \big( \phi ,  \GG(\theta(s)) d\WW(s) \big),
 \end{align*}
for every $t\in [0,T]$ and every $\varphi \in V^1$ and $\psi\in H^1$.
  \end{itemize}
Furthermore, 
 \begin{align}		\label{mom_u_L2}
 \EE\Big( \sup_{t\in [0,T]} \|u(t)\|_{V^0}^{2p} + \int_0^T \|A^{\frac{1}{2}} u(t)\|_{V^0}^2 \, \big[ 1+\|u(t)\|_{V^0}^{2(p-1)}\big] dt \Big) 
 &\leq C\big(1+\EE(\|u_0\|_{V^0}^{2p}\big), \\
 \EE\Big( \sup_{t\in [0,T]} \|\theta(t)\|_{H^0}^{2p} + \int_0^T \|\AA^{\frac{1}{2}} \theta(t)\|_{H^0}^2 \, \big[ 1+\|\theta(t)\|_{H^0}^{2(p-1)}\big] dt \Big) 
 &\leq C\big(1+\EE(\|\theta_0\|_{H^0}^{2p}\big).		\label{mom_t_L2}
 \end{align}
 \end{theorem}
%For technical reasons, we will have to require $u_0\in V^1$, $\theta\in H^1$ and prove estimates similar to \eqref{mom_u_L2}--\eqref{mom_t_L2}
%raising the space regularity of the processes by one step in the scale of Sobolev spaces. Therefore, we have to strengthen the regularity
 %of the diffusion coefficients,
%and require the following.\\
 
 %{\bf Condition (C1)} The function $G$ satisfies Condition {\bf (C)} and $G:V^1\to {\mathcal L}(K;V^1)$ satisfies the growth condition
%\begin{equation}		\label{growthG-1}
%\| G(u)\|_{{\mathcal L}(K;V^1)}^2 \leq K_2 + K_3 \|u\|_{V^1}^2, \quad \forall u\in V^1.
%\end{equation}

%{\bf Condition ($\tilde{\rm \bf C}$1)} The function $\GG$ satisfies Condition {\bf ($\tilde{\rm \bf C}$)}, and $\GG:H^1\to {\mathcal L}(K;H^1)$
 %satisfies the growth condition
%\begin{equation}		\label{growthGG-1}
%\| \GG(\theta)\|_{{\mathcal L}(K;H^1)}^2 \leq \KK_2 + \KK_3 \|\theta\|_{H^1}^2, \quad \forall \theta\in H^1.
%\end{equation}

The following result proves that if $u_0\in V^1$, the solution $u$ to  \eqref{def_u}--\eqref{def_t} is more regular.
\begin{prop}		\label{prop_u_V1}
Let $u_0\in L^{2p}(\Omega;V^1)$, $\theta_0\in L^{2p}(\Omega;H^0)$ for $p=1$ or some $p\in [2,\infty)$,  and let $G$ satisfy condition {\bf (C-u)} and
$\GG$ satisfy condition {\bf (C-$\theta$)}.
 Then the solution $u$ to  \eqref{def_u}--\eqref{def_t} belongs a.s. to $ C([0,T];V^1)  \cap L^2([0,T];V^2)$. Moreover, for some constant $C$
 \begin{equation}		\label{mom_u_V1}
\EE\Big( \sup_{t\in [0,T]} \|u(t)\|_{V^1}^{2p} + \int_0^T \!\! \| A u(t)\|_{V^0}^2 \,\big[ 1+  \| A^{\frac{1}{2}} u(t) \|_{V^0}^{2(p-1)}\big] dt \Big) 
 \leq C  \big[ 1+\EE\big( \|u_0\|_{V^1}^{2p} + \|\theta_0\|_{H^0}^{2p}\big)\big].
 \end{equation}
\end{prop}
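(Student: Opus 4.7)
The plan is to establish \eqref{mom_u_V1} by applying It\^o's formula to $\|A^{1/2}u(t)\|_{V^0}^{2p}$ along equation \eqref{def_u}. Because this functional is not smooth on $V^0$, I would run the argument on Galerkin projections $u_N$ onto the span of the first $N$ eigenfunctions of $A$, where all norms are smooth and It\^o's formula is classical. The antisymmetry identities \eqref{B} and, crucially, \eqref{B(u)Au} survive the Galerkin truncation, so the bound obtained at level $N$ is uniform in $N$; lower semicontinuity of the $V^1$- and $V^2$-norms under the weak-$\ast$ convergence (combined with uniqueness from Theorem \ref{th-gwp}) then transfers the estimate to $u$.

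At the formal level, since $\langle B(u,u),Au\rangle=0$ by \eqref{B(u)Au}, It\^o's formula yields
\begin{align*}
d\|A^{1/2}u\|^{2p} + 2p\nu\|A^{1/2}u\|^{2(p-1)}\|Au\|_{V^0}^2\,dt
=&\; 2p\|A^{1/2}u\|^{2(p-1)}\langle \Pi(\theta v_2),Au\rangle\,dt \\
&+ I(u)\,dt + dM_t,
\end{align*}
where $I(u)$ collects the second-order It\^o correction terms and $dM_t$ is the martingale differential. Using Condition \textbf{(C1)} together with ${\rm Tr}(Q)<\infty$ and Cauchy-Schwarz on the cross term gives $|I(u)|\leq C\|A^{1/2}u\|^{2(p-1)}\bigl(1+\|A^{1/2}u\|^2\bigr)$, while Young's inequality controls the buoyancy drift by $p\nu\|A^{1/2}u\|^{2(p-1)}\|Au\|^2 + C\|A^{1/2}u\|^{2(p-1)}\|\theta\|_{H^0}^2$; the first piece is absorbed by the dissipative term on the left-hand side, and a further Young step packages the remainder into $C\bigl(1+\|A^{1/2}u\|^{2p}+\|\theta\|_{H^0}^{2p}\bigr)$.

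Taking $\sup_{s\leq t}$ and expectation, the martingale is handled by the Burkholder-Davis-Gundy inequality,
\[ \EE\sup_{s\leq t}|M_s|\leq \tfrac12\EE\sup_{s\leq t}\|A^{1/2}u(s)\|^{2p} + C\EE\int_0^t\bigl(1+\|A^{1/2}u(r)\|^{2p}\bigr)dr, \]
the half-supremum is absorbed into the left-hand side, the temperature contribution $\EE\int_0^T\|\theta(r)\|_{H^0}^{2p}dr$ is bounded via \eqref{mom_t_L2} by $C(1+\EE\|\theta_0\|_{H^0}^{2p})$, and Gronwall's lemma closes the estimate to produce \eqref{mom_u_V1}. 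The main obstacle is not analytic but structural: the identity $\langle B(u,u),Au\rangle=0$ of \eqref{B(u)Au} is precisely what makes $V^1$-regularity accessible in 2D, since otherwise the cubic term would contribute a drift that cannot be closed by Gronwall at this level of regularity. The coupling to the temperature is by contrast benign, because only zeroth-order $H^0$-moments of $\theta$ enter, and these are already supplied by Theorem \ref{th-gwp} under the stated hypotheses on $\theta_0$.
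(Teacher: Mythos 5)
Your proposal is correct and follows essentially the same route as the paper: apply $A^{\frac{1}{2}}$ to \eqref{def_u}, use It\^o's formula for $\|A^{\frac{1}{2}}u\|_{V^0}^{2p}$ with the cancellation \eqref{B(u)Au}, absorb the buoyancy term into the dissipation via Cauchy--Schwarz and Young, control the It\^o correction by Condition \textbf{(C1)}, and close with the Davis/BDG inequality and Gronwall, using \eqref{mom_t_L2} for the temperature. The only difference is the rigor device — you justify It\^o's formula by Galerkin truncation and pass to the limit by lower semicontinuity, whereas the paper works formally and localizes with the stopping times $\tau_M=\inf\{t:\|u(t)\|_{V^1}\geq M\}$ before letting $M\to\infty$; both are standard and equivalent here.
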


The next result proves similar bounds for moments of the gradient of the temperature, uniformly in time.
 %, when localized on a set where the $V^0$-norm  $A^{\frac{1}{2}}u$ is bounded, 
%the gradient of the temperature
 %has bounded moments uniformly in time. 
\begin{prop}		\label{lem_mom_t_H1}
Let $u_0\in L^{8p+\epsilon}(\Omega;V^1)$ and $\theta_0\in L^{8p+\epsilon}(\Omega;H^1)$ for some $\epsilon >0$ and $p=1$ or $p\in [2,+\infty)$.
Suppose that the coefficients $G$ and $\GG$ satisfy the conditions {\bf (C-u)} and {\bf (C-$\theta$)}. 
%For any $M>0$ set
%$A(t,M)=\{ \sup_{s\leq t} \|A^{\frac{1}{2}} u(s)\|_{V^0} \leq M\}$. 
%Then for $\beta \in (2,+\infty)$, t
There exists a constants $C$ %, independent of $M$,
such that 
\begin{equation} 		\label{mom_t_H1}
\EE\Big[ \sup_{t\leq T}  %\big(1_{A(t,M)} 
\|\AA^{\frac{1}{2}}  \theta(t)\|_{H^0}^{2p} +  \int_0^T\| \AA \theta(s)\|_{H^0}^2 \| \AA^{\frac{1}{2}} \theta(s)\|_{H^0}^{2(p-1)} ds %\big)
 \Big] \leq C. %(1+  M^\beta).
\end{equation}
\end{prop}

\subsection{Global well-posedness %and moments estimates 
of the time Euler scheme}
The following proposition states the existence and uniqueness of the sequences $\{ u^k\}_{k=0, ...,N}$  and $\{ \theta^k\}_{k=0, ..., N}$.
\begin{prop} 		\label{prop_ul}		
Let  Condition {\bf (G-u)(i)} and {\bf (C-$\theta$)(i)} be satisfied, $u_0\in V^0$ and $\theta^0\in H^0$ a.s.
 The time fully  implicit scheme \eqref{Euler_u}--\eqref{Euler_t}  has a unique 
 solution $\{u^l\}_{l=1, ...,N} \in V^1$,  $\{\theta^l\}_{l=1, ...,N} \in H^1$.
 %{\color{blue} suppress if we do not keep \eqref{mom_uk_V}
 % $\{u_N(t_k)\}_{k=0, ...,N} \in V\cap \HH^2$}. 
 \end{prop}
\subsection{Rates of convergence in probability and in $L^2(\Omega)$}
The following theorem states that   the implicit time Euler scheme converges to the pair $(u,\theta)$ in probability with the ``optimal" 
rate 
``almost 1/2". It is the main result of the paper. \\
For $j=0, ...,N$ set $e_j:= u(t_j)-u^j$ and $\tilde{e}_j:= \theta(t_j)-\theta^j$; then $e_0=\tilde{e}_0=0$.  
%and a logarithmic speed of convergence in $L^2(\Omega)$. The presence of the bilinear term in the It\^o formula for
 %$\|\AA^{\frac{1}{2}} \theta(t)\|_{H^0}^2$  does not enable us to prove exponential moments for this norm, which prevents from using
% the general framework presented in \cite{BeMi_FEM}  to prove a polynomial rate  for the strong convergence.
% \subsection{Rate of convergence in probability}
%The following theorem is the main result of this paper.
\begin{theorem} 		\label{th_cv_proba}
Suppose that  the  conditions {\bf (C-u)} and {\bf (C-$\theta$)} hold. 
Let  $u_0\in L^{32+\epsilon}(\Omega ; V^1)$ and $\theta_0\in L^{32+\epsilon}(\Omega;H^1)$ for some $\epsilon>0$,
  $u,\theta$ be the solution to \eqref{def_u}--\eqref{def_t},
$\{u^j, \theta^j\}_{j=0, ..., N}$ be the solution to \eqref{Euler_u}--\eqref{Euler_t}. 
 Then for every $\eta \in (0,1)$ we have
\begin{equation}		\label{speed_proba} 
\lim_{N\to \infty} P\Big( \max_{1\leq J\leq N} \big[ \|e_J\|_{V^0}^2 + \|\ee_J\|_{H^0}^2 \big] + \frac{T}{N} \sum_{j=1}^N \big[ \|A^{\frac{1}{2}} e_j\|_{V^0}^2
+ \| \AA^{\frac{1}{2}} \ee_j \|_{H^0}^2 \big] \geq  N^{-\eta} \Big) = 0 .
\end{equation} 
\end{theorem} 

 We finally state  that
the strong (i.e. in $L^2(\Omega)$) rate of convergence  of the implicit time Euler scheme is some negative exponent of $\ln N$.
Note that if the initial conditions $u_0$ and $\theta_0$ are deterministic, or if their $V^1$ and $H^1$-norms have moments of all orders (for example
if $u_0$ and $\theta_0$ are Gaussian random variables), the strong rate of convergence is any   negative exponent of $\ln N$.
 More precisely we have the following
result.
\begin{theorem}		\label{th_strong_rate}
Suppose that  the  conditions {\bf (C-u)} and {\bf (C-$\theta$)(i)}  hold.  
Let $u_0\in L^{2^q+\epsilon}(\Omega;V^1)$ and $\theta_0\in L^{2^q+\epsilon}(\Omega;H^1)$ for $q\in [5,\infty)$ and some $\epsilon >0$. 
Then for  some constant $C$ such that  %and % $\beta \in (1,+\infty)$  and
\begin{align}	\label{strong_rate}
\EE\Big( \max_{1\leq J\leq N}& \big[ \|e_J\|_{V^0}^2 + \|\ee_J\|_{H^0}^2 \big] + \frac{T}{N} \sum_{j=1}^N \big[ \|A^{\frac{1}{2}} e_j\|_{V^0}^2
+ \| \AA^{\frac{1}{2}} \ee_j\|_{H^0}^2
\Big) 		
 \leq C  \big( \ln (N) \big) ^{ - (2^{q-1}+1)} 
% \big( \ln (N) \big) ^{ -(2^{q-2}+1- \beta)}. 
\end{align}
for  $N$ large enough.
\end{theorem}

\section{More egularity of the solution} \label{s_regularity_ut}
%For technical reasons, we will have to require $u_0\in V^1$, $\theta\in H^1$ and prove estimates similar to \eqref{mom_u_L2}--\eqref{mom_t_L2}
%raising the space regularity of the processes by one step in the scale of Sobolev spaces. We have to strengthen the regularity of the diffusion coefficients,
%and require the following.\\
{%\bf Condition (C1)} The function $G$ satisfies Condition {\bf (C)} and $G:V^1\to {\mathcal L}(K;V^1)$ satisfies the growth condition
%\begin{equation}		\label{growthG-1}
%\| G(u)\|_{{\mathcal L}(K;V^1)}^2 \leq K_2 + K_3 \|u\|_{V^1}^2, \quad \forall u\in V^1.
%\end{equation}
%and \\
%{\bf Condition ($\tilde{\rm \bf C}$1)} The function $\GG$ satisfies Condition {\bf ($\tilde{\rm \bf C}$)}, and $G:H^1\to {\mathcal L}(K;H^1)$
 %satisfies the growth condition
%\begin{equation}		\label{growthGG-1}
%\| \GG(\theta)\|_{{\mathcal L}(K;H^1)}^2 \leq \KK_2 + \KK_3 \|\theta\|_{H^1}^2, \quad \forall \theta\in H^1.
%\end{equation}
%The following result proves that if $u_0\in V^1$, the solution $u$ to  \eqref{def_u}--\eqref{def_t} is more regular.
%\begin{prop}		\label{prop_u_V1}
%Let $u_0\in L^{2p}(\Omega;V^1)$, $\theta_0\in L^{2p}(\Omega;H^0)$ for $p=1$ or some $p\in [2,\infty)$,  and let $G$ satisfy condition {\bf (C1)}.
 %Then the solution $u$ to  \eqref{def_u}--\eqref{def_t} belongs a.s. to $ C([0,T];V^1)  \cap L^2([0,T];V^2)$. Moreover,
 %\begin{equation}		\label{mom_u_V1}
%\EE\Big( \sup_{t\in [0,T]} \|u(t)\|_{V^1}^{2p} + \int_0^T \!\! \| A u(t)\|_{V^0}^2 \,\big[ 1+  \| A^{\frac{1}{2}} u(t) \|_{V^0}^{2(p-1)}\big] dt \Big) 
% \leq C  \big[ 1+\EE\big( \|u_0\|_{V^1}^{2p} + \|\theta_0\|_{H^0}^{2p}\big)\big].
 %\end{equation}
%\end{prop}
\subsection{Moments of $u$ in $L^\infty(0,T;V^1)$ }
In this section, we prove that if $u_0\in V^1$ and $\theta_0\in H^0$, the $H^1$-norm of the velocity has bounded moments uniformly in time.

\noindent {\it Proof of Proposition  \ref{prop_u_V1}}
Apply the operator $A^{\frac{1}{2}}$ to \eqref{def_u} and use (formally) It\^o's formula for the square of the $\|.\|_{V^0}$-norm of $A^{\frac{1}{2}} u(t)$.
Then, using \eqref{B(u)Au}, we obtain 
\begin{align}		\label{Ito-u}
\|A^{\frac{1}{2}}u(t)\|_{V^0}^2 &\, + 2\nu \int_0^t \|A u(s)\|_{V^0}^2 \, ds = \| A^{\frac{1}{2}} u_0\|_{V^0}^2 + 2 \int_0^t \big( A^{\frac{1}{2}} \Pi \theta(s) v_2,
A^{\frac{1}{2}} u(s)\big) ds\\& + 2 \int_0^t \big( A^{\frac{1}{2}} G(u(s)) dW(s), A^{\frac{1}{2}} u(s)\big) 
+ \int_0^t \| A^{\frac{1}{2}} G(u(s))\|^2_{{\mathcal L}(K;V^0)} \, {\rm Tr}(Q)  ds. \nonumber 
\end{align}
Let $ \tau_M:= \inf\{ t : \|u(t)\|_{V^1} \geq M\} $; using \eqref{growthG-1}, integration by parts, the Cauchy-Schwarz and Young inequalities, we deduce 
for $M>0$ and $t\in [0,T]$
\begin{align*}
\EE\Big( \|A^{\frac{1}{2}} &u(t\wedge \tau_M )\|_{V^0}^2 +2\nu \int_0^{t\wedge \tau_M} \|A u(s)\|_{V^0}^2 ds \Big) \leq \EE\big( \|u_0\|_{V^0}^2 \big) \\
& +
2 \EE\Big( \int_0^{t\wedge \tau_M} \|\theta(s)\|_{H^0} \, \| A u(s)\|_{V^0} ds + {\rm Tr}(Q) \EE\Big( \int_0^{t\wedge \tau_N} \big[ K_2 + K_3 \|u(s)\|_{V^1}^2
\big] ds \Big) \\
\leq & \EE\Big( \|u_0\|_{V^0}^2   + \nu \int_0^{t\wedge \tau_M} \|A u(s)\|_{V^0}^2 ds \Big) 
+ \frac{1}{\nu} \EE\Big( \int_0^{t\wedge \tau_M} \|\theta(s)\|_{H^0}^2 ds \Big) + K_2 T \\
& + K_3 T \EE\Big( \sup_{t\in [0,T]} \|u(t)\|_{V^0}^2\Big)
+ K_3 \int_0^t \EE\Big( \| A^{\frac{1}{2}} u(s\wedge \tau_M)\|_{V^0}^2 \Big) ds.
\end{align*}
Indeed the stochastic integral in the right hand side of \eqref{Ito-u} is a square integrable - hence centered - martingale. 
Neglecting the time integral in the left hand side, using \eqref{mom_u_L2} and  the Gronwall lemma, we deduce
\begin{equation}		\label{mom_1_Au}
\sup_M  \sup_{t\in [0,T]} \EE\Big( A^{\frac{1}{2}} \|u(t\wedge \tau_M)\|_{V^0}^2 \Big) \leq C <\infty.
\end{equation}
As $M\to \infty$, this implies $\EE\big( \int_0^{T}  \|A u(s)\|_{V^0}^2 ds\big) <\infty$. 

Furthermore, the Davis inequality and Young's inequality imply
\begin{align*}
\EE\Big( \sup_{s\leq t} &\int_0^{s\wedge \tau_M} \big( A^{\frac{1}{2}} G(u(r)) dW(r), A^{\frac{1}{2}} u(r)\big) \Big) \\
&  \leq 3 \EE \Big( \Big\{ \int_0^{t} \|A^{\frac{1}{2}} u(r\wedge \tau_M)\|_{V^0}^2 \; {\rm Tr}(Q)
\,  \| A^{\frac{1}{2}} G(u(r\wedge \tau_M))\|_{{\mathcal L}(K;V^0)}^2 dr \Big\}^{\frac{1}{2}} \Big)\\
&\leq   3 \EE \Big(\sup_{s\leq t} \|A^{\frac{1}{2}} u(s\wedge \tau_M)\|_{V^0} \Big\{ {\rm Tr}(Q) \int_0^t
 [K_2+K_3 \|u(s\wedge \tau_M )\|_{V^1}^2] ds \Big\}^{\frac{1}{2}} \Big)\\
&\leq \frac{1}{2} \,  \EE \Big(\sup_{s\leq t} \|A^{\frac{1}{2}} u(s\wedge \tau_M)\|_{V^0}^2\Big) + 9 {\rm Tr}(Q) \EE\Big( \int_0^t [K_2+K_3 \|u(r\wedge \tau_M)\|_{V^1}^2 ds\Big).
\end{align*}
The upper estimates \eqref{Ito-u}, \eqref{mom_u_L2}, \eqref{mom_t_L2} and \eqref{mom_1_Au} imply for some constant $C$ depending on
$ \EE(\int_0^T \big[ \|u(t)\|_{V^0}^2 + \|A^{\frac{1}{2}} u(t)\|_{V^0}^2 + \|\theta(t)\|_{H^0}^2\big] ds \big)<\infty$, 
\begin{align*}
 \sup_M \EE\Big( \frac{1}{2}  \sup_{t\leq T} &  \|A^{\frac{1}{2}} u(t\wedge \tau_M) \|_{V^0}^2  + \int_0^{T\wedge \tau_M} \|Au(s)\|_{V^0}^2 ds \Big) \\
 & \leq C + 
C \EE\Big(\int_0^T \big[  \|A^{\frac{1}{2}} u(t)\|_{V^1}^2 + \|\theta(t)\|_{H^0}^2\big] ds \Big) <\infty .
\end{align*}
As $M\to \infty$,  we deduce
\[  \EE\Big(  \sup_{t\in [0,T]} \|A^{\frac{1}{2}} u(t)\|_{V^0}^2 \Big) + \EE \Big( \int_0^T \|A u(s)\|_{V^0}^2 ds\Big) \leq C <\infty. \]

this proves \eqref{mom_u_V1} for $p=1$.
Given $p\in [2,\infty)$ and using It\^o's formula for the map $x\mapsto x^p$ in \eqref{Ito-u}, we obtain
\begin{align} 		\label{Itou-p}
\| A^{\frac{1}{2}} &u(t\wedge \tau_M)\|_{V^0}^{2p} + 2p\nu \int_0^{t\wedge \tau_M}  \|A u(s)\|_{V^0}^2 \, \|A^{\frac{1}{2}} u(s)\|_{V^0}^{2(p-1)} ds
 = \|A^{\frac{1}{2} } u_0 \|_{V^0}^{2p} \nonumber \\
 &+ 2p \int_0^{t\wedge \tau_M} \big( A^{\frac{1}{2}} \Pi \theta(s) v_2, A^{\frac{1}{2}} u(s) \big) \, 
 \|A^{\frac{1}{2}} u(s)\|_{V^0}^{2(p-1)}  ds	\nonumber \\
 &+ 2p \int_0^{t\wedge \tau_M} \big( A^{\frac{1}{2}} G(u(s)) dW(s) , A^{\frac{1}{2}} u(s)\big) \, \|A^{\frac{1}{2}} u(s)\|_{V^0}^{2(p-1)}   	\nonumber \\
& + p {\rm Tr}(Q) \int_0^{t\wedge \tau_M} \| G(u(s)\|_{{\mathcal L}(K;V^1)}^2 \|A^{\frac{1}{2}} u(s)\|_{V^0}^{2(p-1)} ds	\nonumber \\
& + 2p(p-1) {\rm Tr}(Q) \int_0^{t\wedge \tau_M} \| \big(A^{\frac{1}{2}} G\big)^*(u(s)) \big(A^{\frac{1}{2}} u(s)\big)\|_K^2 
\| A^{\frac{1}{2}} u(s)\|_{V^0}^{2(p-2)} ds.
\end{align}
Integration by parts, the Cauchy-Schwarz, H\"older  and Young inequalities imply 
\begin{align}		\label{t-u-p}
\Big| \int_0^t& \!\!\big( A^{\frac{1}{2}} \Pi \theta(s) v_2, A^{\frac{1}{2}} u(s)\big) \|A^{\frac{1}{2}} u(s)\|_{V^0}^{2(p-1)} ds\Big| \leq 
\int_0^t\!\!  \|Au(s)\|_{V^0}   \|\theta(s)\|_{H^0}\, \| A^{\frac{1}{2}} u(s)\|_{V^0}^{2(p-1)} ds  \nonumber \\
&\leq \Big\{ \int_0^t  \|A u(s)\|_{V^0}^2  \|A^{\frac{1}{2}} u(s)\|_{V^0}^{ 2(p-1)} ds \Big\}^{\frac{1}{2}} 
\Big\{ \int_0^t \|\theta(s)\|_{H^0}^2 \|A^{\frac{1}{2}} u(s)\|_{V^0}^{2(p-1)} ds \Big\}^{\frac{1}{2}} \nonumber \\
&\leq \frac{p\nu}{2}   \int_0^t  \|A u(s)\|_{V^0}^2 \|A^{\frac{1}{2}} u(s)\|_{V^0}^{ 2(p-1)} ds  + 
\frac{1}{2p\nu}  \int_0^t \|\theta(s)\|_{H^0}^2 \|A^{\frac{1}{2}} u(s)\|_{V^0}^{2(p-1)} ds  \nonumber\\
& \leq \epsilon  \int_0^t \!  \|A u(s)\|_{V^0}^2 \|A^{\frac{1}{2}} u(s)\|_{V^0}^{ 2(p-1)} ds  +  C\! \int_0^t \! \|\theta(s)\|_{H^0}^{2p} ds +
C\! \int_0^t \! \| A^{\frac{1}{2}} u(s)\|_{V^0}^{2p} ds. 
\end{align}
Since $a^{p-1}\leq 1+a^p$ for any $a\geq 0$, the growth condition \eqref{growthG-1} implies
\begin{align}		\label{varq-u-p}
\int_0^t \|A^{\frac{1}{2}}&  G(u(s))\|_{{\mathcal L}(K,V^0)}^2 \|A^{\frac{1}{2}} u(s)\|_{V^0}^{2(p-1)} ds   \nonumber  \\
& \leq \int_0^t \big[ K_2+K_3 \|u(s)\|_{V^0}^2
+ K_3 \| A^{\frac{1}{2}} u(s)\|_{V^0}^2\big] \|A^{\frac{1}{2}} u(s)\|_{V^0}^{2(p-1)} ds  \nonumber \\
&\leq C\Big( T + \int_0^T \|u(s)\|_{V^0}^{2p} + \int_0^t \| A^{\frac{1}{2}} u(s)\|_{V^0}^{2p} ds\Big). 
\end{align}
Furthermore, since $\big( \|A^{\frac{1}{2}} G(u(s)) \big)^* A^{\frac{1}{2}} u(s) \|_{V^0}^2 \leq [K_2 + K_3 \|u(s)\|_{V^1}^2] \|A^{\frac{1}{2}} u(s)\|_{V^0}^2$,
the upper estimate of the corresponding integral is similar to that of \eqref{varq-u-p}. 
Since the stochastic integral $\int_0^{t\wedge \tau_M} \big( A^{\frac{1}{2}} G(u(s))  dW(s), A^{\frac{1}{2}} u(s) \big) \|u(s)\|_{V^0}^{2(p-1)}\big)$ 
is square integrable  it is centered. 
Therefore,\eqref{Itou-p} and the above upper estimates \eqref{t-u-p}--\eqref{varq-u-p}  imply
\begin{align*}
\sup_M \EE\Big( &\|A^{\frac{1}{2}} u(t\wedge \tau_M)\|_{V^0}^{2p} + p\nu \int_0^{t\wedge \tau_M}\|Au(s)\|_{V^0}^2 \|A^{\frac{1}{2}} u(s)\|_{V^0}^{2(p-1)} 
 \Big)  \\
 &\leq C\Big( T+\EE\Big( \int_0^t \big[ \|u(s)\|_{V^0}^{2p} + \|\theta(s)\|_{H^0}^{2p}\big] ds \Big) + 
 \int_0^t \EE\big( \|A^{\frac{1}{2}} u(s\wedge \tau_M)\|_{V^0}^{2p} \big) ds.
\end{align*} 
Gronwall's lemma implies 
\begin{align}
 \sup_M &\, \sup_{t\in [0,T]} \EE\big( \|A^{\frac{1}{2}} u(s\wedge \tau_M)\|_{V^0}^{2p}\big) =C<\infty, 	\label{mom-u-tau}\\\
\sup_M &\; \EE\Big( \int_0^{T\wedge \tau_M}
\|Au(s)\|_{V^0}^2 \|A^{\frac{1}{2}} u(s)\|_{V^0}^{2(p-1)} ds \Big) =C< \infty.		\label{mom-Au-tau}
\end{align}
Finally, the Davis inequality, then the H\"older and Young inequalities imply 
\begin{align}	\label{Davis-up}
\EE\Big(& \sup_{s\in [0,t]} 2p \Big| \int_0^{s\wedge \tau_M} \big( A^{\frac{1}{2}} G(u(r)) dW(r),A^{\frac{1}{2}} u(r) \| A^{\frac{1}{2}}u(r)\|_{V^0}^{2(p-1)} 
\Big|\Big)  \nonumber \\
&\leq 6p\,  \EE\Big( \Big\{ \int_0^{t\wedge \tau_M}  {\rm Tr}(Q) \|A^{\frac{1}{2}} G(u(s))\|_{{\mathcal L}(K;V^0)}^2 \|A^{\frac{1}{2}} u(s) \|_{V^0}^{4p-2}
ds \Big\}^{\frac{1}{2}} \Big) \nonumber \\
&\leq 6p\,  \big( { \rm Tr}(Q)\big)^{\frac{1}{2}} \EE\Big(  \sup_{s\leq t\wedge \tau_M} \|A^{\frac{1}{2}} u(s)\|_{V^0}^{p} \nonumber \\
&\qquad \qquad \times 
\Big\{ \int_0^t \|A^{\frac{1}{2}} G(u(s\wedge \tau_N))\|_{{\mathcal L}(K;V^0)}^2 
\|A^{\frac{1}{2}} u(s\wedge \tau_M) \|_{V^0}^{2p-2}
ds \Big\}^{\frac{1}{2}} \Big) \nonumber \\
&\leq \frac{1}{2} \EE\Big( \sup_{s\in [0,t\wedge \tau_M]}   \|A^{\frac{1}{2}} u(s)\|_{V^0}^{2p} \Big) +
C \EE\Big( 1+ \int_0^t \|u(s)\|_{V^0}^{2p} ds + \int_0^t \|A^{\frac{1}{2}} u(s)\|_{V^0}^{2p} ds\Big). 
\end{align}
The upper estimates \eqref{Itou-p}, \eqref{mom_u_L2} and \eqref{Davis-up} imply 
\[ \sup_M \EE\Big( \sup_{s\in [0, T\wedge \tau_M]} \|A^{\frac{1}{2}} u(s)\|_{V^0}^{2p} \Big) \leq C \Big[ 1+\sup_M  \EE\Big( \int_0^{T}  
\big[\|\theta(s\wedge \tau_M)\|_{H^0}^{2p} + \|u(s\wedge \tau_M)\|_{V^1}^{2p}\big]  ds\Big)\Big] <\infty. 
\]
As $M\to \infty$ in this inequality and in \eqref{mom-Au-tau}, the monotone convergence theorem  concludes the proof  of \eqref{mom_u_V1}. 
\hfill $\Box$

\subsection{Moment estimates of $\theta$ in $L^\infty(0,T;H^1)$}
We next give upper estimates for moments of $\sup_{t\in [0,T]} \|\AA^{\frac{1}{2}} \theta(t)\|_{H^0}$, i.e., prove Proposition \ref{lem_mom_t_H1}.\\
 However, since 
$\langle [u(s). \nabla] \theta(s),\AA \theta(s)\rangle \neq 0$, 
 unlike what we have in the proof of the previous result, we keep the bilinear term. 
This creates technical problems and we proceed in two steps. First, using the mild formulation of the weak solution $\theta$ of \eqref{def_t},
 we prove that the gradient of the temperature has finite moments. Then going back to the weak form, we prove the desired result.
 %localized on a set  where $\| A^{\frac{1}{2}} u(t)\|_{V^0}$ is bounded by a constant.
 
Let  $\{S(t)\}_{t\geq 0}$ be the semi-group generated by $-\nu A$,  $\{\SS(t)\}_{t\geq 0}$ be the semi-group generated by $-\kappa \AA$, that
is $S(t)=\exp(-\nu t A)$ and $\SS(t)=\exp(-\kappa t\AA)$ for every $t\geq 0$. Note that  for every $\alpha >0$
\begin{align}
\| A^\alpha S(t) \|_{{\mathcal L}(V^0;V^0)} \leq C t^{-\alpha}, \quad\forall  t>0 	\label{AS}
 \\
\| A^{-\alpha} \big[ {\rm Id} - S(t)\big] \|_{{\mathcal L}(V^0;V^0)}
  \leq C t^{\alpha}, \quad \forall t>0.	\label{A(I-S)}
\end{align} 
 Similar upper estimates are valid when we replace $A$ by $\AA$, $S(t)$ by $\SS(t)$ and $V^0$ by $H^0$.
 
Note that  if $u_0\in L^2(\Omega;V^1)$ and $\theta_0\in L^2(\Omega;H^0)$,  $u\in L^{2}(\Omega ; C([0,T];V^0)\cap L^\infty( [0,T] ; V^1))$ 
%\cap L^{2p}(\Omega; L^\infty( [0,T] ; V^1)$ 
and 
$\theta \in  L^{2}(\Omega ; C([0,T];H^0))\cap 
L^2(\Omega\times [0,T] ; H^1)$, we can write the solutions of \eqref{def_u}--\eqref{def_t} in the following mild form
\begin{align}
u(t) = &\, S(t) u_0 - \int_0^t S(t-s) B(u(s), u(s))\,  ds + \int_0^t S(t-s) \big(\Pi \theta(t) v_2\big) \,  ds  \nonumber \\
&+ \int_0^t S(t-s) G(u(s)) dW(s), 	\label{weak_u}\\
\theta(t) = &\, \tilde{S}(t)  \theta_0 - \int_0^t \SS(t-s) \big( [u(s) . \nabla] \theta(s)\big) \, ds + \int_0^t \SS(t-s) \GG(\theta(s)) d\WW(s), 	\label{weak_t}
\end{align}
where the first equality holds a.s. in $V^0$ and the second one in $H^0$.

Indeed, since $\|A^\alpha u\|_{V^0}\leq C  \|A^{\frac{1}{2}}  u\|_{V^0}^{2\alpha} \|u(s)\|_{V^0}^{1-2\alpha}$,
 the upper estimate \eqref{GiMi-uv} for $\delta +\rho >\frac{1}{2}$, $\delta+\alpha+\rho=1$ and the Minkowski inequality imply %for every $\epsilon >0$
 \begin{align*}
\Big\| \int_0^t S(t-s)& B(u(s),u(s)) ds \Big\|_{V^0 }  \leq  \, \int_0^t \| A^{\delta}  A^{-\delta} B(u(s),u(s)) \|_{V^0} ds \\
& \leq C  \int_0^t (t-s)^{-\delta } \|A^\alpha u(s)\|_{V^0} \|A^\rho u(s)\|_{V^0} ds \\
& \leq C \,  \sup_{s\in [0,t]} \| u(s)\|_{V^1}^{2} \int_0^t (t-s)^{-\delta }  ds
% \|A^{\frac{1}{2}} u(s)\|_{V^0}^{2(\alpha +\rho)} \|u(s)\|_{V^0}^{2(1-\alpha-\rho)} ds \\
%&\leq C \Big(\int_0^t (t-s)^{-1+\frac{p}{p-1} \epsilon} \Big)^{\frac{p-1}{p}} \sup_{t\in [0,T]} \|u(s)\|_{V^0}^{2\delta} 
%\Big( \int_0^t \|A^{\frac{1}{2}} u(s)\|_{V^0}^2 ds\Big)^{\frac{1}{p}},
 \end{align*}
Since $\|S(t)\|_{{\mathcal L}(V^0;V^0)} \leq 1$, it is easy to see that 
 \[ \Big\| \int_0^t S(t-s) \Pi \theta(t) v_2 ds\Big\|_{V^0} \leq C \int_0^t \|\theta(t)\|_{H^0} ds .\]
 Furthermore,
 \[ \EE\Big( \Big\| \int_0^t S(t-s) G(u(s)) dW(s) \Big\|_{V^0}^2 \Big) \leq {\rm Tr}(Q) \EE\Big( \int_0^t [K_0+K_1 \|u(t)\|_{V^0}^2\big] ds \Big) <\infty.
 \]
 Therefore, the stochastic integral  $\int_0^t S(t-s) G(u(s)) dW(s) \in V^0$ a.s., and the identity \eqref{weak_u} is true a.s. in $V^0$.
 
  A similar argument shows that \eqref{weak_t} holds a.s. in $H^{0}$. We only show that the convolution involving the bilinear term belongs to $H^0$.
  Using the Minkowski inequality and the upper estimate \eqref{GiMi-ut} with positive constants $\delta, \alpha, \rho $ such that 
  $\alpha, \rho\in (0,\frac{1}{2})$,   $\delta + \rho > \frac{1}{2}$ and $\delta + \alpha + \rho =1$, we obtain
  \begin{align*}
  \Big\|& \int_0^t \SS(t-s) [ (u(s). \nabla) \theta(s)] ds \Big\|_{H^0} \leq \int_0^t \| \AA^\delta  \SS(t-s) \; \AA^{-\delta} [ (u(s). \nabla) \theta(s)] \|_{H^0}\,  ds  \\
  &\leq C  \int_0^t (t-s)^{-\delta} \, \|A^\alpha u(s)\|_{V^0} \, \| \AA^\rho \theta(s)\|_{H^0} d\, s \\
  & \leq C  \sup_{s\in [0,t]} \|u(s)\|_{V^1}\sup_{s\in [0,t]} \|\theta(s)\|_{H^0}^{1-2\rho}  \Big( \int_0^t (t-s)^{-\frac{\delta }{1-\rho}} ds\Big)^{1-\rho}
 \Big(  \int_0^t \|\AA^{\frac{1}{2}} \theta(s)\|_{H^0}^2 ds \Big)^\rho <\infty,
  \end{align*}
  where the last upper estimate is deduced from H\"older's inequality and $\frac{\delta}{1-\rho}<1$.

The following result shows that for fixed $t$, the $L^2$-norm of the gradient of $\theta(t)$ has finite moments.
\begin{lemma}		\label{lem_sup_E_t}
Let  $p\in [0,+\infty)$, $u_0\in L^{4p + \epsilon}(\Omega;V^1)$ and $\theta_0 \in L^{4p+\epsilon}(\Omega;H^1)$ for some  $\epsilon \in (0,\frac{1}{2})$. Let 
the diffusion coefficient $G$ and $\GG$ satisfy the condition {\bf(C)} and {\bf ($\tilde{\rm \bf C}$)} respectively. 
 For every N, let  $\tilde{\tau}_N := \inf\{ t\geq 0 : \|\AA^{\frac{1}{2}} \theta(t)\|_{H^0} \geq N\} \wedge T$; then 
%For every $N>0$
%let $\tilde{\tau}_N = \inf\{ t\geq 0: \|\AA^{\frac{1}{2}} \theta(t)\|_{H^0} \geq M\}$. Then 
\begin{equation}		\label{sup_E_t}
\sup_{N>0} 
\sup_{t\in [0,T]} \EE\big(\| \AA^{\frac{1}{2}} \theta(t \wedge \tilde{\tau}_N
)\|_{H^0}^{2p}\big)  <\infty.
\end{equation} 
\end{lemma}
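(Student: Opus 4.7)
The plan is to apply $\tilde{A}^{1/2}$ to the mild formulation \eqref{weak_t} and bound each of the three resulting terms in $L^{2p}(\Omega;H^0)$. The initial-condition piece is trivial since $\tilde{S}$ is a contraction on $H^1$, giving $\|\tilde{A}^{1/2}\tilde{S}(t)\theta_0\|_{H^0}\leq \|\tilde{A}^{1/2}\theta_0\|_{H^0}$, whose $2p$-th moment is finite by assumption.

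For the stochastic convolution, I rely on the commutation $\tilde{A}^{1/2}\tilde{S}(t-s)=\tilde{S}(t-s)\tilde{A}^{1/2}$, the contractivity of $\tilde{S}$ on $H^0$, and the Burkholder--Davis--Gundy inequality, combined with Jensen's inequality. Provided one has an $H^1$-growth bound on $\tilde{G}$ (as in condition $(\tilde{C}1)$; the weaker $(\tilde{C})$ does not suffice for factorization at the critical regularity $H^1$), this produces a linear Gronwall contribution
\[
\EE\Big[\Big\|\int_0^t \tilde{A}^{1/2}\tilde{S}(t-s)\tilde{G}(\theta(s))\,d\tilde{W}(s)\Big\|_{H^0}^{2p}\Big] \leq C + C\int_0^t \EE\big[\|\tilde{A}^{1/2}\theta(s)\|_{H^0}^{2p}\big]\, ds.
\]

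The bilinear convolution is the heart of the proof. I apply \eqref{GiMi-ut} with $\rho\in(0,1/2)$ strictly, $\delta+\rho>1/2$, $\alpha+\delta+\rho=1$ and $\delta<1/2$, combined with the semigroup bound \eqref{AS}, to get
\[
\Big\|\tilde{A}^{1/2}\tilde{S}(t-s)[(u(s)\cdot\nabla)\theta(s)]\Big\|_{H^0} \leq C(t-s)^{-(1/2+\delta)}\|A^\alpha u(s)\|_{V^0}\|\tilde{A}^\rho\theta(s)\|_{H^0}.
\]
The key interpolation $\|\tilde{A}^\rho\theta\|_{H^0}\leq C\|\theta\|_{H^0}^{1-2\rho}\|\tilde{A}^{1/2}\theta\|_{H^0}^{2\rho}$ together with $\|A^\alpha u\|_{V^0}\leq C\|u\|_{V^1}$, then raising to the $2p$-th power and invoking Jensen with the probability measure proportional to $(t-s)^{-(1/2+\delta)}\,ds$, reduces the task to bounding
\[
\EE\big[\|u(s)\|_{V^1}^{2p}\,\|\theta(s)\|_{H^0}^{2p(1-2\rho)}\,\|\tilde{A}^{1/2}\theta(s)\|_{H^0}^{4p\rho}\big].
\]
A three-fold H\"older inequality with conjugate exponents $r_1=r_2=2/(1-2\rho)$ and $r_3=1/(2\rho)$ (so that $1/r_1+1/r_2+1/r_3=1$) splits this as a product whose $u$-factor has order $4p/(1-2\rho)\leq 4p+\epsilon$ (finite by Proposition \ref{prop_u_V1}), whose $\theta$-factor in $H^0$ has order exactly $4p$ (finite by Theorem \ref{th-gwp}), and whose $\|\tilde{A}^{1/2}\theta\|$-factor contributes exactly $\EE[\|\tilde{A}^{1/2}\theta(s)\|_{H^0}^{2p}]^{2\rho}$. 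This balancing is possible provided $\rho$ is chosen of order $\epsilon/(4p+\epsilon)$ or smaller.

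Setting $h(t):=\EE[\|\tilde{A}^{1/2}\theta(t)\|_{H^0}^{2p}]$, the three estimates together yield the hybrid singular Volterra inequality
\[
h(t)\leq C + C\int_0^t h(s)\, ds + C\int_0^t (t-s)^{-(1/2+\delta)} h(s)^{2\rho}\, ds,
\]
with $1/2+\delta<1$ and $2\rho<1$. Using the crude bound $h^{2\rho}\leq 1+h$ absorbs the sub-linear term into a weakly singular linear one, and Henry's singular Gronwall lemma then delivers $\sup_{t\in[0,T]} h(t)<\infty$. The main obstacle is exactly the H\"older calibration in the bilinear convolution: the power of $\|\tilde{A}^{1/2}\theta\|_{H^0}$ on the right side must be strictly less than $2p$ in order to close the inequality, while the $u$-factor and $H^0$-factor of $\theta$ must remain within the $L^{4p+\epsilon}$ moments provided. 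Taking $\rho$ strictly below $1/2$ in \eqref{GiMi-ut}, at the cost of a more singular kernel, is precisely what makes the interpolation subcritical and the whole argument tractable.
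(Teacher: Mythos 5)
Your overall strategy is the same as the paper's: decompose $\tilde A^{1/2}\theta(t)$ through the mild formulation \eqref{weak_t}, bound the initial term by $\|\theta_0\|_{H^1}$, treat the stochastic convolution with Burkholder and the $H^1$-growth of $\tilde G$ (you are right that this silently uses \eqref{growthGG-1}, i.e.\ condition $(\tilde{\rm C}1)$, rather than the bare condition $(\tilde{\rm C})$ in the statement — the paper does exactly the same), and close the bilinear convolution via \eqref{GiMi-ut}, the interpolation $\|\tilde A^\rho\theta\|_{H^0}\le \|\tilde A^{1/2}\theta\|_{H^0}^{2\rho}\|\theta\|_{H^0}^{1-2\rho}$ with $\rho$ small enough that the $u$- and $\theta$-factors stay within the $L^{4p+\epsilon}$ moments, and a weakly singular Gronwall lemma. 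The only cosmetic difference is in the bookkeeping: the paper performs a pathwise H\"older/Young split (exponents $p_1=\frac{2(1-\rho)}{1-2\rho}$, $p_2=\frac{2(1-\rho)}{(1-2\rho)^2}$, $p_3=\frac{1}{2\rho}$) before taking expectations and lands directly on a linear singular Volterra inequality, whereas you take expectations first, obtain $h(s)^{2\rho}$ under the integral, and linearize with $h^{2\rho}\le 1+h$; both routes feed into the same Gronwall-type lemma (the paper proves its own version, Lemma \ref{Gronwall}, in place of Henry's).

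Two points need repair. First, your parameter choice is internally inconsistent: you impose $\alpha+\delta+\rho=1$ together with $\delta+\rho>\frac12$ and $\|A^\alpha u\|_{V^0}\le C\|u\|_{V^1}$, but the latter forces $\alpha\le\frac12$, hence $\delta+\rho=1-\alpha\ge\frac12$ can only hold with equality when $\alpha=\frac12$, contradicting the strict inequality. The fix is to use the inequality form $\alpha+\delta+\rho\ge 1$ of Lemma \ref{GiMi} with $\alpha=\frac12$, $\rho$ small and $\delta\in(\frac12-\rho,\frac12)$, which is what the paper does (with $\beta=\frac12-\delta$). Second, and more substantively, your final Gronwall step is applied to $h(t)=\EE\big(\|\tilde A^{1/2}\theta(t)\|_{H^0}^{2p}\big)$ without knowing a priori that $h$ is finite; both Henry's lemma and Lemma \ref{Gronwall} require a \emph{bounded} nonnegative $\varphi$, and the inequality is vacuous otherwise. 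The paper closes this by localizing with the stopping times $\tilde\tau_N=\inf\{t:\|\tilde A^{1/2}\theta(t)\|_{H^0}\ge N\}$, obtaining a bound on $\EE\big(\|\tilde A^{1/2}\theta(t\wedge\tilde\tau_N)\|_{H^0}^{2p}\big)$ uniform in $N$ and $t$, and then letting $N\to\infty$ by monotone convergence (using $\EE\int_0^T\|\tilde A^{1/2}\theta(t)\|_{H^0}^2\,dt<\infty$ to ensure $\tilde\tau_N\to\infty$ a.s.). You should incorporate this localization; without it the argument does not formally close.
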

\begin{proof}
Write $\theta(t)$ using \eqref{weak_t}; then $\|\AA^{\frac{1}{2}} \theta(t)\|_{H^0} \leq \sum_{i=1}^3 T_i(t)$,  where
\begin{align*}
 T_1(t)&\, =\| \AA^{\frac{1}{2}} \SS(t) \theta_0\|_{H^0}\; , \quad T_2(t)=\Big\| \int_0^t \AA^{\frac{1}{2}} \SS(t-s) [ (u(s).\nabla) \theta(s)] ds\Big\|_{H^0}, \\
T_3(t)&\, = \Big\| \int_0^t \AA^{\frac{1}{2}} \SS(t-s) \GG(\theta(s)) d\WW(s)\Big\|_{H^0}. 
\end{align*}
%Clearly, $T_1(t) \leq \|A^{\frac{1}{2}} \theta_0\|_{H^0} \leq \|\theta_0\|_{H^1}$. \\
The Minkowski inequality implies for $\beta \in (0,\frac{1}{2})$ 
\begin{align*}
 T_2(t)  \leq&\, \int_0^t \| \AA^{\frac{1}{2}} \SS(t-s) [(u(s) . \nabla) \theta(s)] \|_{H^0} ds\\
 & \leq  \int_0^t \|\AA^{1-\beta} \SS(t-s)\|_{{\mathcal L}(H^0;H^0)} \| \AA^{-(\frac{1}{2}-\beta)} [ (u(s).\nabla)\theta(s)\|_{H^0} ds. 
\end{align*}
Apply \eqref{GiMi-ut} with $\delta = \frac{1}{2}-\beta$, $\alpha = \frac{1}{2}$ and $\rho \in (\beta, \frac{1}{2})$. A simple computation
proves that $\|\AA^\rho f\|_{H^0} \leq \|\AA^{\frac{1}{2}} f\|_{H^0}^{2\rho} \|f\|_{H^0}^{1-2\rho}$ for any $f\in H^1$.  Therefore, 
\begin{align*} 
 \| \AA^{-(\frac{1}{2}-\beta)} [(u(s).\nabla)\theta(s)]\|_{H^0} &\leq C \|A^{\frac{1}{2}} u(s)\|_{V^0} \|\AA^\rho \theta(s)\|_{H^0}\\
 & \leq  C \|A^{\frac{1}{2}} u(s)\|_{V^0} 
\| \AA^{\frac{1}{2}} \theta(s) \|_{H^0}^{2\rho} \|\theta(s)\|_{H^0}^{1-2\rho}.
\end{align*}   
This upper estimate and \eqref{AS} imply
\[ T_2(t) \leq C \sup_{s\in [0,T]} \|A^{\frac{1}{2}} u(s)\|_{V^0}  \sup_{s\in [0,t]} \|\theta(s)\|_{H^0}^{1-2\rho} 
\int_0^t (t-s)^{-1+\beta} \|\AA^{\frac{1}{2}} 
\theta(s)\|_{H^0}^{2\rho} ds.\]
For any $p\in [1,\infty)$, H\"older's inequality  with respect to the finite measure $(t-s)^{-(1-\beta)} 1_{[0,t)}(s) ds$ implies
\begin{align*}
 T_2(t)^{2p} &\, \leq C \sup_{s\in [0,t]} \|A^{\frac{1}{2}} u(s)\|_{V^0}^{2p} \sup_{s\in [0,t]} \|\theta(s)\|_{H^0}^{2p(1-2\rho)} 
 \Big( \int_0^t (t-s)^{-(1-\beta)} ds\Big)^{2p-1}\\
 &\quad \times 
\Big( \int_0^t (t-s)^{-(1-\beta)} \|\AA^{\frac{1}{2}} \theta(s)\|_{H^0}^{4p\rho} ds \Big).
\end{align*}
Let $p_1=\frac{2(1-\rho)}{1-2\rho}$, $p_2=\frac{2(1-\rho)}{(1-2\rho)^2}$ and $p_3=\frac{1}{2\rho}$. Then $\frac{1}{p_1}+\frac{1}{p_2} + \frac{1}{p_3}=1$,
$4\rho p p_3=2p$ and $p p_1 = p(1-2\rho) p_2 := \tilde{p}$. Young's and H\"older's inequalities imply
\begin{align*}
 T_2(t)^{2p}   \leq &\, C \Big[ \frac{1}{p_1} \sup_{s\in [0,t]} \|A^{\frac{1}{2}} u(s)\|_{V^0}^{2\tilde{p}} 
 + \frac{1}{p_2} \sup_{s\in [0,t]} \|\theta(s)\|_{H^0}^{2\tilde{p}} \\
 &\quad 
+ \frac{1}{p_3} \Big( \int_0^t (t-s)^{-1+\beta} \|\AA^{\frac{1}{2}} \theta(s)\|_{H^0}^{2p} ds\Big) \Big( \int_0^t (t-s)^{-1+\beta} ds\Big)^{p_3-1} \Big] .
\end{align*}
Note that the continuous function $\rho\in (0,\frac{1}{2}) \mapsto  \frac{2(1-\rho)}{1-2\rho}$ is increasing with  $\lim_{\rho\to 0}  \frac{2(1-\rho)}{1-2\rho} =2$. 
Given $\epsilon>0$ choose 
$\rho \in (0,\frac{1}{2})$ close enough to 0 to have $2\tilde{p}=2p \frac{2(1-\rho)}{1-2\rho} = 4 p+\epsilon$, then choose $\beta \in (0,\rho)$. The above
computations yield
\begin{align} 	\label{T_2-convol}
T_2(t)^{2p} \leq C\Big[ \sup_{s\in [0,t]} \|A^{\frac{1}{2}} u(s)\|_{V^0}^{4p+\epsilon} + \sup_{s\in [0,t]} \|\theta(s)\|_{H^0}^{4p+\epsilon} \Big]
+ C \int_0^t (t-s)^{-1+\beta} \|\AA^{\frac{1}{2}} \theta(s)\|_{H^0}^{2p} ds.
\end{align}
Finally, Burhholder's inequality, the growth condition \eqref{growthGG-1}  and H\"older's inequality imply for $t\in [0,T]$ 
\begin{align}	\label{mom-convol}
\EE \Big( \Big\| \int_0^{t\wedge \tau_N} &\!\! \AA^{\frac{1}{2}} \SS(t-s) \GG(\theta(s)) d\WW(s) \Big\|_{H^0}^{2p} \Big) 
\leq \, C_p \big( {\rm Tr}(Q)\big)^p \EE\Big( \Big|
\int_0^{t\wedge \tau_N}\!\! \|\AA^{\frac{1}{2}} \GG(\theta(s))\|_{{\mathcal L}(\KK;H^0)}^2 ds \Big|^p\Big)  \nonumber \\
\leq &\, C_p \, \big( {\rm Tr}(Q)\big)^p \EE\Big( \Big| \int_0^{t\wedge \tau_N} [\tilde{K}_2 + \tilde{K}_3 \|\theta(s)\|_{H^0}^2 
 + \tilde{K}_3 \| \AA^{\frac{1}{2}} \theta(s)\|_{H^0}^2 \big] ds \Big|^p\Big)
\nonumber \\
\leq &\, C(p,\tilde{K}_2, \tilde{K}_3, {\rm Tr}(Q))  T^p \Big[ 1+ \EE\Big( \sup_{s\in [0,T]} \|\theta(s)\|_{H^0}^{2p} \Big]  \nonumber \\
&\; + C_p \big( {\rm Tr}(Q)\big)^p  \tilde{K_3}^p T^{p-1} 
\int_0^t \EE\big( \|\AA^{\frac{1}{2}} \theta(s\wedge \tau_N)\|_{H^0}^{2p}  \big) ds.
\end{align} 
%For $N>0$ and let $\tilde{\tau}_N= \inf\{ t\geq 0: \|\AA^{\frac{1}{2}} \theta(t)\|_{H^0} \geq N\}\wedge T$. 
The upper estimates 
\eqref{T_2-convol}, \eqref{mom-convol} and $T_1(t) \leq \|A^{\frac{1}{2}} \theta_0\|_{H^0} \leq \|\theta_0\|_{H^1}$ used with $t\wedge \tilde{\tau}_N$
instead of $t$ imply for every $t\in [0,T]$ 
\begin{align*}
 \EE\big( \|\AA^{\frac{1}{2}} \theta(t\wedge \tilde{\tau}_N)\|_{H^0}^{2p} \big) \leq&\,  C_p
\Big[1+  \EE\Big(\| \AA^{\frac{1}{2}} \theta_0\|_{H^0}^{2p} +  \sup_{s\in [0,T]} \|A^{\frac{1}{2}} u(s)\|_{V^0}^{4p+\epsilon} +
\sup_{s\in [0,T]} \|\theta(s)\|_{H^0}^{4p+\epsilon} \Big)\Big]\\
 &\, + C_p \int_0^t  \big[ (t-s)^{-1+\beta}  + \tilde{K}_3 T^{p-1} \big] 
 \EE\big( \| \AA^{\frac{1}{2}} \theta(s\wedge \tilde{\tau}_N)\|_{H^0}^{2p} \big) ds, 
\end{align*}
where the constant $C_p$ does not depend on $t$ and $N$. 
Theorem \ref{th-gwp}, Proposition \ref{prop_u_V1}, and the version of Gronwall's lemma proved in the following lemma \ref{Gronwall} imply
\eqref{sup_E_t} 
%\begin{equation} \label{upper_N}
% \sup_{N>0} \sup_{t\in [0,T]} \EE\big( \| \AA^{\frac{1}{2}} \theta(t\wedge \tilde{\tau}_N)\|_{H^0}^{2p}\big)  \leq C,
% \end{equation}
for some constant $C$ depending on $\EE(\|u_0\|_{V^1}^{4p+\epsilon})$ and $\EE(\|\theta_0\|_{H^0}^{4p+\epsilon})$.\\
 The proof of the Lemma 
is complete.
\end{proof}

The following lemma is an extension of Lemma 3.3, p. 316  in \cite{Walsh}.
 For the sake of completeness its prove is given at the end of this section.
\begin{lemma}		\label{Gronwall}
Let $\epsilon \in (0,1)$, $a,b,c$ be positive constants and $\varphi$ be a bounded non negative function such that 
\begin{equation}  	\label{maj_Gron}
\varphi(t) \leq a+\int_0^t  \big[ b+c(t-s)^{-1+\epsilon}\big] \,\varphi(s)\, ds, \quad \forall t\in [0,T].
\end{equation}
Then $\sup_{t\in [0,T]} \varphi(t)\leq  C $ %(a,b,c,T,\epsilon)$
 for some constant $C$  depending on $a,b,c,T$ and $\epsilon$.
\end{lemma}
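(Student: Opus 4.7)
The plan is to iterate the inequality \eqref{maj_Gron} enough times for the integral kernel acting on $\varphi$ to become non-singular, then apply the classical Gronwall lemma. Set $\psi(u) := b + c u^{-1+\epsilon}$ for $u > 0$, so that \eqref{maj_Gron} reads $\varphi(t) \leq a + (\psi \ast \varphi)(t)$ on $[0,T]$, where $\ast$ denotes convolution on $[0,T]$. A straightforward induction on $k$, based on Fubini's theorem, yields
$$\varphi(t) \leq a \sum_{j=0}^{k-1} I_j(t) + \bigl(\psi^{\ast k} \ast \varphi\bigr)(t), \quad t \in [0,T],$$
where $\psi^{\ast k}$ is the $k$-fold self-convolution of $\psi$ (with the convention $\psi^{\ast 0}\ast \varphi = \varphi$ and $I_0 \equiv 1$) and $I_j(t) := \int_0^t \psi^{\ast j}(u)\, du$ for $j \geq 1$. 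The induction step uses the identity $\psi \ast (\psi^{\ast k} \ast \varphi) = \psi^{\ast (k+1)} \ast \varphi$, itself a consequence of Fubini after a change of variable.

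Next I would analyze the iterated kernels $\psi^{\ast k}$ via the Beta-function identity $\int_0^u (u-s)^{\alpha-1} s^{\beta-1}\, ds = B(\alpha,\beta)\, u^{\alpha+\beta-1}$ for $\alpha,\beta > 0$. Expanding $(b + c(\cdot)^{-1+\epsilon})^{\ast k}$ as a sum over the choice of factor $b$ or $c(\cdot)^{-1+\epsilon}$ in each of the $k$ copies, and repeatedly applying the Beta identity (the constant $b$ contributing an extra integration, that is a power $u^1/1$), I expect to show by induction on $k$ that
$$\psi^{\ast k}(u) = \sum_{m=0}^{k} C_{k,m}\, u^{m + (k-m)\epsilon - 1}, \quad u \in (0,T],$$
for some finite constants $C_{k,m}$ depending only on $b,c,\epsilon$. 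Since $\epsilon < 1$, the smallest exponent is $k\epsilon - 1$, attained for $m = 0$. Choosing an integer $k_0$ with $k_0 \epsilon > 1$ therefore makes $\psi^{\ast k_0}$ continuous on $[0,T]$, and in particular bounded by some $M < \infty$; the same bookkeeping ensures $\sup_{[0,T]} I_j < \infty$ for every $0 \leq j \leq k_0 - 1$.

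Finally, with $k = k_0$ the iterated inequality takes the non-singular form
$$\varphi(t) \leq A + M \int_0^t \varphi(s)\, ds, \quad t \in [0,T], \quad \text{where } A := a \sum_{j=0}^{k_0-1} \sup_{[0,T]} I_j < \infty,$$
and the classical Gronwall lemma yields $\varphi(t) \leq A \exp(MT) =: C(a,b,c,T,\epsilon)$, as required. The main obstacle is the bookkeeping in the second step: verifying inductively that each convolution shifts the most singular exponent from $-1+k\epsilon$ to $-1+(k+1)\epsilon$ with a finite Beta-function coefficient, while factors of $b$ merely contribute additional integrations in $u$. The hypothesis that $\varphi$ is bounded is used only to guarantee that every convolution integral is well-defined at each stage of the iteration; the final constant does not depend on any a priori bound for $\varphi$.
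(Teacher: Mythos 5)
Your proof is correct and follows essentially the same route as the paper: iterate the inequality, use Fubini and the Beta-function identity $\int_r^t (t-s)^{-1+k\epsilon}(s-r)^{-1+\epsilon}\,ds = B(k\epsilon,\epsilon)(t-r)^{-1+(k+1)\epsilon}$ to track how each convolution improves the singular exponent by $\epsilon$, stop once the iterated kernel is bounded, and finish with the classical Gronwall lemma. The only cosmetic difference is that you carry the full expansion of $\psi^{\ast k}$ as a sum over powers $u^{m+(k-m)\epsilon-1}$, whereas the paper absorbs the less singular contributions into constants $A_k, B_k$ at each step; both bookkeeping schemes lead to the same conclusion.
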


\bigskip

\noindent{\it Proof of Proposition \ref{lem_mom_t_H1}}
We next prove that %, when localized on a set where the $V^0$-norm  $A^{\frac{1}{2}}u$ is bounded, 
the gradient of the temperature
 has bounded moments uniformly in time. 
%\begin{prop}		\label{lem_mom_t_H1}
%Let $u_0\in L^{8p+\epsilon}(\Omega;V^1)$ and $\theta_0\in L^{8p+\epsilon}(\Omega;H^1)$ for some $\epsilon >0$ and $p=1$ or $p\in [2,+\infty)$.
%Suppose that the coefficients $G$ and $\GG$ satisfy the conditions {\bf (C1)} and {\bf ($\tilde{\rm \bf C}$1)}. 
%There exists a constants $C$ %, independent of $M$,
%such that 
%\begin{equation} 		\label{mom_t_H1}
%\EE\Big[ \sup_{t\leq T}  %\big(1_{A(t,M)} 
%\|\AA^{\frac{1}{2}}  \theta(t)\|_{H^0}^{2p}%\big)
% \Big] \leq C. %(1+  M^\beta).
%\end{equation}
%\end{prop}
%\begin{proof} 

We only prove \eqref{mom_t_H1} for $p\in [2,+\infty)$; the other argument is similar and easier. 

Applying the operator $\AA^{\frac{1}{2}}$ to equation \eqref{def_t}, and writing It\^o's formula for the square of corresponding $H^0$-norm.
we obtain %\eqref{Ito_grad_theta}}. % Then 
\begin{align*}		
\| \AA^{\frac{1}{2}} \theta(t) &\|_{H^0}^2 +2\kappa \int_0^t \|\AA \theta(s)\|_{H^0}^2 ds = \| \AA^{\frac{1}{2}} \theta_0\|_{H^0}^2 
- 2 \int_0^t \langle (u(s).\nabla) \theta(s), \AA\theta(s) \rangle ds  \\% \nonumber \\
& + 2 \int_0^t \big( \AA^{\frac{1}{2}} \GG(\theta(s)) d\WW(s) , \AA^{\frac{1}{2}} \theta(s)\big)
+ {\rm Tr}(Q) \int_0^t \| \AA^{\frac{1}{2}} \GG(\theta(s))\|_{H^0}^2 ds.
\end{align*}
Then apply It\^o's formula for the map $x\mapsto x^p$. This yields, using integration by parts, 
\begin{align} \label{Ito-t-H1}
\| \AA^{\frac{1}{2}}& \theta(t) \|_{H^0}^{2p} + 2p\kappa \int_0^t \|\AA \theta(s)\|_{H^0}^2 \|\AA^{\frac{1}{2}} \theta(s)\|_{H^0}^{2(p-1)} ds
= \| \AA^{\frac{1}{2}} \theta_0\|_{H^0}^{2p}  \nonumber \\
&- 2p \int_0^t \langle (u(s).\nabla) \theta(s), \AA\theta(s) \rangle \| \AA^{\frac{1}{2}} \theta(s)\|_{H^0}^{2(p-1)} ds  \nonumber \\
& + 2p \int_0^t \big( \AA^{\frac{1}{2}} \GG(\theta(s)) d\WW(s) , \AA^{\frac{1}{2}} \theta(s)\big)  \| \AA^{\frac{1}{2}} \theta(s)\|_{H^0}^{2(p-1)} \nonumber \\
&+ p {\rm Tr}(\QQ) \int_0^t \| \AA^{\frac{1}{2}} \GG(\theta(s))\|_{H^0}^2  \| \AA^{\frac{1}{2}} \theta(s)\|_{H^0}^{2(p-1)} ds\nonumber \\
&+ 2p(p-1) {\rm Tr}(\QQ)\int_0^{t} \| \big(\AA^{\frac{1}{2}} \GG\big)^*(\theta(s)) \big(\AA^{\frac{1}{2}} \theta(s)\big)\|_K^2 
\| \AA^{\frac{1}{2}} \theta(s)\|_{H^0}^{2(p-2)} ds.
\end{align}
%Let $p,q\in [2,\infty)$ be such that $\frac{1}{p}+\frac{1}{q}=\frac{1}{2}$. Then,  t
The Gagliardo-Nirenberg inequality \eqref{GagNir} 
and the inclusion $V^1\subset \LL^4$ implies %for $\alpha = 1-\frac{2}{p}$ 
\begin{align*}
  \int_0^t \big|  \langle (u(s).\nabla) &\theta(s) , \tilde{A} \theta(s)\rangle \big| \,  \|\AA^{\frac{1}{2}} \theta(s)\|_{H^0}^{2(p-1)}  ds  \\
  & \leq  C  \int_0^t \|\tilde{A} \theta(s)\|_{H^0}  \|u(s)\|_{\LL^4} \|\AA^{\frac{1}{2}} \theta(s)\|_{L^4}  \|\AA^{\frac{1}{2}} \theta(s)\|_{H^0}^{2(p-1)}  ds  \\
 &\leq  C  \int_0^t \|\tilde{A} \theta(s)\|_{H^0}^{\frac{3}{2}}   \|u(s)\|_{V^1}  \|\AA^{\frac{1}{2}} \theta(s)\|_{H^0}^{2p-\frac{3}{2}}  ds .
\end{align*}
Then using the H\"older and Young inequalities, we deduce 
%with respect to the measure  $\| \AA^{\frac{1}{2}} \theta(s)\|^{2(p-1)} 1_{[0,t\wedge \tilde{\tau}_N]}(s) ds$,  
%which is a.s. finite thanks to
%\eqref{sup_E_t}, we obtain a.s. 
\begin{align}		\label{bilin-ut}
2p \int_0^{t} % \wedge \tilde{\tau}_N}
  &\big| \langle (u(s).\nabla) \theta(s), \AA\theta(s) \rangle \big| 
\| \AA^{\frac{1}{2}} \theta(s)\|_{H^0}^{2(p-1)} ds  	\nonumber \\
\leq & \; (2p-1)\, \kappa  \int_0^{t} %\wedge \tilde{\tau}_N} 
 \|\AA(\theta(s))\|_{H^0}^{2} \| \AA^{\frac{1}{2}} \theta(s)\|_{H^0}^{2(p-1)} ds  \nonumber \\
 &\; +   C(\kappa,p) \sup_{s\in [0,T]}  \|u(s)\|_{V^1}^4 
 \int_0^t 
 \| \AA^{\frac{1}{2}} \theta(s)\|_{H^0}^{2p} ds .
\end{align}
The growth condition \eqref{growthGG-1}, H\"older's and Young inequalities imply
\begin{equation}		\label{maj_trace1}
 \int_0^{t} %\wedge \tilde{\tau}_N}
  \|\AA^{\frac{1}{2}} \GG(\theta(s)) \|_{H^0}^2 \|\AA^{\frac{1}{2}} \theta(s)\|_{H^0}^{2(p-1)} ds 
\leq C \int_0^{t} %\wedge\tilde{\tau}_N} \!\! 
\big[ 1+ \|\theta(s)\|_{H^0}^{2p} +\|\AA^{\frac{1}{2}} \theta(s)\|_{H^0}^{2p} \big] ds,
\end{equation}
and a similar computation yields 
\begin{align}		\label{maj_trace2}
 \int_0^{t} %\wedge \tilde{\tau}_N} &\| 
  \big\| \big(\AA^{\frac{1}{2}} &\, \GG\big)^*(\theta(s)) \big(\AA^{\frac{1}{2}} \theta(s)\big)\|_K^2 
\| \AA^{\frac{1}{2}} \theta(s)\|_{H^0}^{2(p-2)} ds \nonumber \\
& \leq 
C \int_0^{t} %\wedge\tilde{\tau}_N} \!\! 
\big[ 1+ \|\theta(s)\|_{H^0}^{2p} +\|\AA^{\frac{1}{2}} \theta(s)\|_{H^0}^{2p} \big] ds.
\end{align}
%Fix $\beta\in (2,+\infty)$ and choose $\alpha\in (0,1)$ such that $\frac{2}{1-\alpha}=\beta$, and $p>2$ such that $\frac{2}{p}=1-\alpha$.
%Since $A(t,M)\subset A(s,M)$ for $s\leq t$ and $A(s,M)$ is ${\mathcal F}_s$-measurable, t
 Let $\tilde{\tau}_N:=\inf\{ t\geq 0: \|\AA^{\frac{1}{2}} \theta(t)\|_{H^0} \geq N\}$. The upper estimates \eqref{Ito-t-H1}-- \eqref{maj_trace2}
  written for $t\wedge \tilde{\tau}_N$ instead of $t$ imply 
\begin{align*}
% 1_{A(t,M)} &
\sup_{t\in [0,T]} &\|\AA^{\frac{1}{2}} \theta(t\wedge \tilde{\tau}_N) %\wedge \tilde{\tau}_N)
 \|_{H^0}^{2p}  + \kappa \int_0^{T\wedge \tilde{\tau}_N} \| \AA \theta(s)\|_{H^0}^2 \| \AA^{\frac{1}{2}} \theta(s)\|_{H^0}^{2(p-1)} ds 
  \leq \|\AA^{\frac{1}{2}} \theta_0\|_{H^0}^{2p}\\
&  %\\
%&\quad 
+ C \sup_{s\in [0,T]} \! \|u(s)\|_{V^1}^4 \!\! \int_0^{T\wedge \tilde{\tau_N}} \! \| \AA^{\frac{1}{2}} \theta(s)\|_{H_0}^{2p} ds  
+C \int_0^{T\wedge \tilde{\tau}_N} \!\! \big( 1+\|\theta(s)\|_{H^0}^{2p} + \| \AA^{\frac{1}{2}} \theta(s)\|_{H^0}^{2p}  \big)ds \\
 %\nonumber \\
%&\quad 
%+ C\int_0^T 1_{A(s,M)} \big(1+M^{\beta} \big)  \|\AA^{\frac{1}{2}} \theta(s) %\wedge \tilde{\tau}_N)
%\|_{H^0}^{2p}  ds \nonumber \\
&+ 2p \,% 1_{A(t,M)} 
\sup_{t\in [0,T]} \int_0^{t\wedge \tilde{\tau}_N} % \wedge \tilde{\tau}_N}
 \big(% 1_{A(s,M)} 
 \AA^{\frac{1}{2}} \GG(\theta(s)) d\WW(s), 
\AA^{\frac{1}{2}} \theta(s) \big) 
\| \AA^{\frac{1}{2}} \theta(s)\|_{H^0}^{2(p-1)}.
\end{align*}
%where in the last term we have used the  local property of the stochastic integral. 
Using the Cauchy-Schwarz inequality, Fubini's theorem, \eqref{mom_u_V1}  and \eqref{sup_E_t}, we deduce
\begin{align}		\label{maj_u_int-theta}
\EE\Big( \sup_{s\in [0,T]}& \| u(s)\|_{V^1}^4  \int_0^{T\wedge\tilde{\tau}_N}  \| \AA^{\frac{1}{2}} \theta(s)\|_{H^0}^{2p} ds \Big) \nonumber \\
&  \leq 
\Big\{ \EE\Big( \sup_{s\in [0,T]} \| u(s)\|_{V^1}^8  \Big) \Big\}^{\frac{1}{2}} 
\Big\{ \int_0^T \EE\big( \|\AA^{\frac{1}{2}} \theta(s\wedge \tilde{\tau}_N)\|_{H^0}^{4p} \big) ds \Big\}^{\frac{1}{2}} \leq C.
\end{align} 
The Davis inequality, the growth condition \eqref{growthGG-1},  the Cauchy-Schwarz, Young and H\"older inequalities imply
\begin{align*}
\EE \Big(& \sup_{t\in [0,T]} %\wedge \tilde{\tau}_N]} 
\Big|  \int_0^{t\wedge \tilde{\tau}_N} \big( %1_{A(s,M)} 
\AA^{\frac{1}{2}} \GG(\theta(s)) d\WW(s), 
\AA^{\frac{1}{2}} \theta(s) \big)  \| \AA^{\frac{1}{2}} \theta(s)\|_{H^0}^{2(p-1)} \Big) 
\\
& \leq C \, \EE\Big( \Big\{ \int_0^{T} %\wedge \tilde{\tau}_N}
 %1_{A(s,M)} 
 {\rm Tr}(\QQ)
 \big[\tilde{K}_2  + \tilde{K}_3 \|\theta(s\wedge \tilde{\tau}_N)\|_{H^1}^2   \big] 
\|\AA^{\frac{1}{2}} \theta(s\wedge \tilde{\tau}_N) \|_{H^0}^{4p-2} ds \Big\}^{\frac{1}{2}} \Big)  \\
&\leq  C \, \EE\Big[ \big( \sup_{s\leq T} \big( % 1_{A(s,M)}
 \|\AA^{\frac{1}{2}} \theta(s\wedge \tilde{\tau}_N)
\|_{H^0}^p\big)   ({\rm Tr}(\QQ))^{\frac{1}{2}}  \\
&\quad \times 
 \Big\{\int_0^T \big[ \tilde{K}_2  + \tilde{K}_3 \|\theta(s\wedge \tilde{\tau}_N)\|_{H^0}^2  + \tilde{K}_3
  \|\AA^{\frac{1}{2}} \theta(s\wedge \tilde{\tau}_N)\|_{H^0}^2 \big] %\wedge \tilde{\tau}_N)
% \|_{H^1}^2}\big]
\| \AA^{\frac{1}{2}} \theta(s\wedge \tilde{\tau}_N) %\wedge \tilde{\tau}_N)
\|_{H^0}^{2(p-1)} ds \Big\}^{\frac{1}{2}} \Big)\\
&\leq \frac{1}{4p} \EE\Big( \sup_{s\leq T} \big( % 1_{A(s,M)} 
\|\AA^{\frac{1}{2}}   \theta(s\wedge\tilde{\tau}_N) %\wedge \tilde{\tau}_N)
\|_{H^0}^{2p}\Big) 
+ C \EE\Big( \int_0^T \big[1+\| \theta(s\wedge\tilde{\tau}_N)\|_{H^0}^{2p} 
+ \| \AA^{\frac{1}{2}} \theta(s\wedge \tilde{\tau}_N)\|_{H^0}^{2p}  \big] ds\Big). 
\end{align*}
Therefore, the upper estimates \eqref{mom_t_L2}, \eqref{sup_E_t} and \eqref{maj_u_int-theta} imply
\[ %begin{equation} 	\label{mom_t_H1-Bis}
\frac{1}{2}  \EE\Big( \sup_{s\leq T}  \|\AA^{\frac{1}{2}} \theta(s\wedge\tilde{\tau}_N)\|_{H^0}^{2p} \Big) 
+  \kappa\; \EE\Big(  \int_0^{T\wedge \tilde{\tau}_N} \| \AA \theta(s\wedge \tilde{\tau}_N)\|_{H^0}^2 \| \AA^{\frac{1}{2}} \theta(s)\|_{H^0}^{2(p-1)} ds \Big)
 \leq C
\] %end{equation} 
for some constant $C$ independent of $N$. 

 As $N\to +\infty$, we deduce \eqref{mom_t_H1}; 
this completes the proof of Proposition \ref{prop_ul}. 
%\end{proof}
\hfill $\Box$

We conclude this section with an extension of the Gronwall Lemma. 

 \noindent{\it Proof of Lemma \ref{Gronwall}  }
For $t\in [0,T]$, iterating \eqref{maj_Gron} and using the Fubini theorem, we obtain
\begin{align*}
\varphi(t) \leq &\, a+\int_0^t \big[ b +c (t-s)^{-1+\epsilon}] \Big[ a+\int_0^s \big( b+c(s-r)^{-1+\epsilon}\big)  \varphi(r) dr \Big] ds \\
\leq & \, a\Big( 1+\! \! \int_0^t [b+c(t-s)^{-1+\epsilon}] ds \Big) + \int_0^t \!\Big( \! \int_r^t \! \big[ b+c(t-s)^{-1+\epsilon}] [b+c(s-r)^{-1+\epsilon}] ds\Big) 
\varphi(r) dr\\
\leq &\, A_1%(a,b,c,T,\epsilon) 
+ \int_0^t 
\Big[ b^2(t-r) + \frac{2bc}{\epsilon}  (t-r)^\epsilon + c^2\int_r^t (t-s)^{-1+\epsilon}(s-r)^{-1+\epsilon}ds \Big] \varphi(r) dr\\
\leq &  A_1%(a,b,c,T,\epsilon) \!+\!\! 
+ \int_0^t \Big[ B_1%(b,c,T,\epsilon) 
+ C_1%(c,\epsilon)  
 (t-r)^{-1+2\epsilon} \! 
\int_0^1\! \!\lambda^{-1+\epsilon}(1-\lambda)^{-1+\epsilon} d\lambda \Big] \varphi(r) dr,
\end{align*} 
for positive constants $A_1$ (depending on  $a,b,c,T,\epsilon$), $B_1$  (depending on $b,c,T,\epsilon$), and $C_1$
(depending on $c$ and $\epsilon$).  One easily proves by induction on $k$ that for every integer $k\geq 1$
\begin{align*} 
 \varphi(t) &\, \leq A_k%(a,b,c,T,\epsilon) 
 + \! \int_0^t \Big[ B_k%(b,c,T,\epsilon) 
 +  c\,  C_{k-1}%(c,\epsilon) 
 \int_r^t (t-s)^{-1+k\epsilon} (s-r)^{-1+\epsilon} ds \Big]
  \varphi(r) dr\\
 &\, \leq A_k%(a,b,c,T,\epsilon) 
 + \int_0^t \big[ B_k%(b,c,T,\epsilon) 
 + C_k%(c,\epsilon) 
 (t-r)^{-1+(k+1)\epsilon}\big] \varphi(r) dr,
\end{align*} 
for some positive constants $A_k $, $B_k$ %, $\tilde{C}_k $ 
and $C_k$ depending on  $a,b,c,T$ and $\epsilon$.
 Indeed, a change of variables implies 
\begin{align*}
 \int_r^t (t-s)^{-1+k\epsilon} (s-r)^{-1+\epsilon} ds&\, 
 = (t-r)^{-1+(k+1)\epsilon} \int_0^1 \lambda^{-1+k\epsilon} (1-\lambda)^{-1+\epsilon} d\lambda \\
&\, = \tilde{C}_k%(k,\epsilon) 
(t-r)^{-1+(k+1)\epsilon}
\end{align*}
for some constant $\tilde{C}_k$ depending on $k$ and $\epsilon$. 

Let $k^*$ be the largest integer such that $k\epsilon <1$, that is $k^*\epsilon<1\leq (k^*+1)\epsilon$. 
Then since $(t-r)^{-1+(k^*+1)\epsilon}\leq T^{-1+(k^*+1)\epsilon}$, we deduce 
\[ \varphi(t) \leq A%(a,b,c,T,\epsilon) 
+ \int_0^T\! B\, %(b,c,T,\epsilon) 
\varphi(r) dr,\]
for some positive constants $A$ and $B $ depending on the parameters $a,b,c,T$ and $\epsilon$. 
The classical Gronwall lemma concludes the proof of the Lemma. 
\hfill $\Box$

%%%%%%%%

\section{ Moment estimates of time increments of the solution} \label{s-increments}
In this section we prove moment estimates for various norms of time increments of the solution to \eqref{def_u}--\eqref{def_t}. 
This will be crucial to deduce the speed of convergence of numerical schemes. 
We first prove the time regularity of the velocity and temperature in $L^2$.
%Let 
%$u_0\in L^{2p}(\Omega;V)$ for some $p\in [2,\infty)$ and $u$ be the solution to \eqref{3D-NS}, that is % for $t\in [0,T]$
%\begin{align} 			\label{3.1}
%u(t) = & \, S(t) u_0 - \int_0^t \!\! S(t-s) B(u(s),u(s)) ds -a \int_0^t \!\! S(t-s){\color{blue} \Pi} |u(s)|^{2\alpha} u(s) ds \nonumber \\
%&\quad  + \int_0^t \!\!S(t-s) G(u(s)) dW(s), \quad \forall t\in [0,T], \quad \PP\;  \mbox{\rm a.s.}
%\end{align} 
\begin{prop}	\label{prop_regularity_L2} 
Let $u_0, \theta_0$ be ${\mathcal F}_0$-measurable, and suppose that $G$ and $\GG$ satisfy {\bf (C-u)} and  {\bf (C-$\theta$)}
respectively.  

(i) Let  $u_0\in L^{4p}(\Omega;V^1)$ and $\theta_0\in L^{2p}(\Omega;H^0)$. 
 For $0\leq \tau_1<\tau_2 \leq T$,
\begin{equation}		\label{increm_u_L2}
\EE\big( \|u(\tau_2) -u(\tau_1) \|_{V^0}^{2p}\big) \leq C \,   |\tau_2-\tau_1|^{p}.
\end{equation}

(ii) Let $u_0\in L^{8p+\epsilon}(\Omega;V^1)$ and $\theta_0\in L^{8p+\epsilon}(\Omega;H^1)$ for some  $\epsilon >0$. 
Then for  $0\leq \tau_1<\tau_2 \leq T$,
\begin{equation} 	\label{increm_t_L2}
\EE\big(  \|\theta(\tau_2) -\theta(\tau_1) \|_{H^0}^{2p} \big) \leq C \, |\tau_2-\tau_1|^{ p}.
\end{equation}

\end{prop}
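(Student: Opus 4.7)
The plan is to start from the integral form of \eqref{def_u}--\eqref{def_t} on $[\tau_1,\tau_2]$,
\begin{equation*}
u(\tau_2)-u(\tau_1) = -\!\int_{\tau_1}^{\tau_2}\!\bigl[\nu A u(s) + B(u(s),u(s)) - \Pi(\theta(s)v_2)\bigr]\, ds + \int_{\tau_1}^{\tau_2}\!G(u(s))\,dW(s),
\end{equation*}
and its analogue for $\theta$, to estimate each piece in $L^{2p}(\Omega;V^0)$, resp.\ $L^{2p}(\Omega;H^0)$, and assemble the result by Minkowski. The two stochastic integrals are treated immediately by the Burkholder--Davis--Gundy inequality together with the growth conditions (C) and ($\tilde{\rm C}$) and the moment bounds of Theorem~\ref{th-gwp}, producing a clean factor $|\tau_2-\tau_1|^p$. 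For each deterministic integral Cauchy--Schwarz in time will extract $(\tau_2-\tau_1)^{1/2}$.

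For part~(i), the Stokes piece is bounded by $(\tau_2-\tau_1)^{1/2}\bigl(\int_0^T\|Au\|_{V^0}^2\, ds\bigr)^{1/2}$.  Lemma~\ref{GiMi} applied with $\delta=0$, $\alpha=\tfrac12$, $\rho=1$ gives $\|B(u,u)\|_{V^0}\le C\|u\|_{V^1}\|Au\|_{V^0}$, so the convective piece is dominated by $(\tau_2-\tau_1)^{1/2}\sup_{[0,T]}\|u\|_{V^1}\bigl(\int_0^T\|Au\|_{V^0}^2\, ds\bigr)^{1/2}$.  The buoyancy piece is trivially $(\tau_2-\tau_1)\sup_{[0,T]}\|\theta\|_{H^0}$.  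After raising to the $2p$-th power and using Cauchy--Schwarz in $\Omega$, \eqref{increm_u_L2} reduces to $\EE\sup_{[0,T]}\|u\|_{V^1}^{4p}<\infty$ (Proposition~\ref{prop_u_V1} with exponent $2p$, which is why $u_0\in L^{4p}(\Omega;V^1)$ is imposed) and $\EE\bigl(\int_0^T\|Au\|_{V^0}^2\, ds\bigr)^{2p}<\infty$; the latter is obtained by raising identity \eqref{Ito-u} to the $p$-th power, absorbing the cross-term $2(\theta v_2, Au)$ by Young, dominating the resulting martingale via BDG, and invoking Proposition~\ref{prop_u_V1} and Theorem~\ref{th-gwp}.

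Part~(ii) follows the same template, but the convective term $(u\cdot\nabla)\theta$ couples the two equations: \eqref{GiMi-ut} with $\delta=0$, $\alpha=\tfrac12$, $\rho=1$ gives $\|(u\cdot\nabla)\theta\|_{H^0}\le C\|u\|_{V^1}\|\tilde A\theta\|_{H^0}$, so Cauchy--Schwarz in time followed by Cauchy--Schwarz in $\Omega$ leaves us to control $\EE\sup\|u\|_{V^1}^{4p}$ and $\EE\bigl(\int_0^T\|\tilde A\theta\|_{H^0}^2\, ds\bigr)^{2p}$. The latter is obtained by raising \eqref{Ito-t-H1} to the $p$-th power and reproducing the argument of Proposition~\ref{lem_mom_t_H1}; the key term $\sup_{[0,T]}\|u\|_{V^1}^4\int_0^T\|\tilde A^{1/2}\theta\|_{H^0}^{2p}\, ds$ appearing after \eqref{bilin-ut} forces, via a further Cauchy--Schwarz in $\Omega$, moments of order $8$ of $\|u\|_{V^1}$ and of order $4p$ of $\sup_{[0,T]}\|\tilde A^{1/2}\theta\|_{H^0}$. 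Tracing these back through Proposition~\ref{lem_mom_t_H1} yields the $L^{8p+\epsilon}$-integrability hypotheses on $u_0$ and $\theta_0$.

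The main obstacle is therefore not the integral bookkeeping but the propagation of high moments of $\theta$: each time we split a product of the form $(\text{sup-norm})\times(\text{time integral})$ by Cauchy--Schwarz in $\Omega$, the exponent we must control doubles, and the chain $2p\to 4p\to 8p+\epsilon$ is exactly what Proposition~\ref{lem_mom_t_H1} (whose proof itself already needed such splittings because $\langle(u\cdot\nabla)\theta,\tilde A\theta\rangle\neq 0$) ultimately forces. Once these moment bounds are in hand, Minkowski in $L^{2p}(\Omega)$ assembles the estimates of the individual terms into \eqref{increm_u_L2} and \eqref{increm_t_L2}.
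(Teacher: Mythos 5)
Your route is genuinely different from the paper's: you work with the variational (integral-in-time) form and extract $|\tau_2-\tau_1|^{1/2}$ from each drift term by Cauchy--Schwarz in time, whereas the paper works with the mild formulation \eqref{weak_u}--\eqref{weak_t} and gets the increment from the analytic-semigroup estimates \eqref{AS}--\eqref{A(I-S)} applied to the factorization $[S(\tau_2-\tau_1)-{\rm Id}]S(\tau_1-s)$. For part (i) your approach can be made to work: the extra ingredient you need, $\EE\big(\int_0^T\|Au(s)\|_{V^0}^2\,ds\big)^{2p}<\infty$, does follow from raising \eqref{Ito-u} to the power $2p$ and using $\EE\sup_t\|u(t)\|_{V^1}^{4p}<\infty$, which is available under $u_0\in L^{4p}(\Omega;V^1)$; this is a real additional lemma that you only sketch, but the sketch is sound. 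The price of your route, compared with the paper's, is that you must measure $B(u,u)$ and $Au$ in $V^0$, hence you need $u\in L^2(0,T;V^2)$ with high moments, while the paper only ever uses $\|A^{-\delta}B(u,u)\|_{V^0}\le C\|A^{1/2}u\|_{V^0}^2$ and the smoothing of $S(t)$, i.e.\ only $\sup_t\|u(t)\|_{V^1}$.

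For part (ii) there is a genuine gap, and it is exactly where the two approaches diverge. Your estimate $\|(u\cdot\nabla)\theta\|_{H^0}\le C\|u\|_{V^1}\|\tilde A\theta\|_{H^0}$ forces you to control $\EE\big(\int_0^T\|\tilde A\theta(s)\|_{H^0}^2\,ds\big)^{2p}$, i.e.\ high moments of the $L^2_t(H^2_x)$ norm of the temperature. This quantity is never established in the paper (Proposition \ref{lem_mom_t_H1} only yields $\EE\int_0^T\|\tilde A\theta\|_{H^0}^2\|\tilde A^{1/2}\theta\|_{H^0}^{2(q-1)}ds<\infty$, which does not bound $\EE(\int\|\tilde A\theta\|^2)^{q}$), and if you try to produce it by raising the It\^o identity for $\|\tilde A^{1/2}\theta\|_{H^0}^2$ to the power $2p$, the bilinear term leaves you with $\EE\big[\sup_t\|u(t)\|_{V^1}^{8p}\,\big(\int_0^T\|\tilde A^{1/2}\theta\|_{H^0}^2ds\big)^{2p}\big]$; any splitting of this product (Cauchy--Schwarz or Young) demands moments of $\sup_t\|u\|_{V^1}$ of order at least $16p$, and correspondingly higher moments of $\theta_0$ through Lemma \ref{lem_sup_E_t} --- strictly more than the hypothesis $u_0,\theta_0\in L^{8p+\epsilon}$. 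So your claim that ``tracing these back yields the $L^{8p+\epsilon}$ hypotheses'' does not hold; the chain does not close. The paper avoids this entirely: in the mild form the convective term is measured in $\tilde A^{-\delta}H^0$ via \eqref{GiMi-ut} with $\alpha=\rho=\tfrac12$, so only $\|A^{1/2}u\|_{V^0}\,\|\tilde A^{1/2}\theta\|_{H^0}$ appears, the singular kernel $(\tau_1-s)^{-(1/2+\delta)}$ absorbs the loss of one derivative, and the H\"older exponents $p_1,p_2$ are tuned so that only $\sup_t\EE\|\tilde A^{1/2}\theta(t)\|_{H^0}^{2pp_1}$ with $2pp_1<4p+\tfrac{\epsilon}{2}$ is needed --- which is precisely what $L^{8p+\epsilon}$ data provide through Lemma \ref{lem_sup_E_t}. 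To repair your argument you would either have to adopt the semigroup decomposition for the temperature equation, or strengthen the hypotheses of part (ii) well beyond those stated.
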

\begin{proof}
Recall that  $S(t)=e^{-\nu t A}$ is the analytic semi group  generated by the Stokes operator $A$ multiplied by the viscosity $\nu$
and $\SS(t)=e^{-\kappa t \AA} $ is the semi group generated by $\AA=-\Delta$. We use the mild formulation of the solutions stated in
\eqref{weak_u} and \eqref{weak_t}. 

(i) Let $0\leq \tau_1<\tau_2\leq T$; then $u(\tau_2)-u(\tau_1)=\sum_{i=1}^4 T_i(\tau_1,\tau_2)$, where
\begin{align}		\label{decomp_increm_u}
T_1(\tau_1,\tau_2)&\, = S(\tau_2) u_0 - S(\tau_1) u_0 = \big[ S(\tau_2)-S(\tau_1)\big] S(\tau_1) u_0, \nonumber  \\
T_2(\tau_1,\tau_2)&\, = \int_0^{\tau_2} S(\tau_2-s) B(u(s),u(s)) ds - \int_0^{\tau_1} S(\tau_1-s) B(u(s),u(s)) ds, 		\nonumber \\
T_3(\tau_1,\tau_2)&\, = \int_0^{\tau_2} S(\tau_2-s) \Pi \theta(s) v_2 ds - \int_0^{\tau_1} S(\tau_1-s) \Pi \theta(s) v_2 ds  	\nonumber \\
T_4(\tau_1,\tau_2)&\, = \int_0^{\tau_2} S(\tau_2-s) G(u(s)) dW(s) - \int_0^{\tau_1} S(\tau_1-s) G(u(s)) dW(s).
\end{align}
The arguments used in the proof of Lemma 2.1 \cite{BeMi_additive}, using \eqref{GiMi-uv},  \eqref{AS}, \eqref{A(I-S)} and \eqref{mom_u_V1}  prove
\begin{equation}	\label{T1-T2}
\EE\big( \| T_1(\tau_1,\tau_2)\|_{V^0}^{2p} + \| T_2(\tau_1,\tau_2)\|_{V^0}^{2p}\big)  \leq C \big[1+ \EE(\|u_0\|_{V^1}^{4p})] |\tau_2-\tau_1|^p.
\end{equation}
Let $ T_3(\tau_1,\tau_2) = T_{3,1}(\tau_1,\tau_2)+T_{3,2}(\tau_1,\tau_2)$, where
\begin{align*} T_{3,1}(\tau_1,\tau_2)=&\, \int_0^{\tau_1}  [S(\tau_2-\tau_1)-{\rm Id}] S(\tau_1-s) \big[ \Pi \theta(s) v_2\big] ds, \\
T_{3,2}(\tau_1,\tau_2)=&\, \int_{\tau_1}^{\tau_2} \! S(\tau_2-s) \big[ \Pi \theta(s) v_2\big] ds.
\end{align*}
Since the family of sets $\{ A(t,M)\}_t$ is decreasing, the Minkowski inequality,  \eqref{AS} and \eqref{A(I-S)}  imply
\begin{align*}
 \|  T_{3,1}(\tau_1,\tau_2)\|_{V^0} \leq& \int_{0}^{\tau_1} \!\! \|A^{\frac{1}{2}} S(\tau_1-s)\|_{{\mathcal L}(V^0;V^0)} \, 
 \| A^{- \frac{1}{2}} [ S(\tau_2-\tau_1)-{\rm Id}] \|_{{\mathcal L}(V^0;V^0)}  \|\Pi \theta(s) v_2
\|_{V^0}  ds  \\
\leq &\; C \big| \tau_2-\tau_1\big|^{\frac{1}{2}}  \;  \sup_{s\in [0,T]}    \| \theta(s)\|_{H^0} , 
\end{align*}  
and
\[ \|T_{3,2}(\tau_1,\tau_2)\|_{V^0} \leq \int_{\tau_1}^{\tau_2} \| S(\tau - s) \big[ \Pi \theta(s) v_2\big] \|_{V^0}ds \leq 
\big| \tau_2-\tau_1\big|  \sup_{s\in [0,T] } \| \theta(s)\|_{H^0}.\]
The inequality \eqref{mom_t_L2}  implies
\begin{equation}		\label{mom_u_T3}
\EE\Big(\|T_3(\tau_1,\tau_2)\|_{V^0}^{2p} \Big) \leq C\,  \big| \tau_2-\tau_1|^{p} \, \EE(\|\theta_0\|_{H^0}^{2p}).
\end{equation} 
Finally, decompose the stochastic integral as follows:
\[  T_{4,1}(\tau_1,\tau_2)=\int_0^{\tau_1}\!\!  [S(\tau_2-\tau_1)-{\rm Id}] S(\tau_1-s) G(s) dW(s), \, 
T_{4,2}(\tau_1,\tau_2)\!=\!\int_{\tau_1}^{\tau_2} \! S(\tau_2-s) G(s) dW(s).\]
The Burkholder inequality, \eqref{A(I-S)}, H\"older's inequality and the growth condition \eqref{growthG-1} imply 
\begin{align}		\label{mom_u_T41}
\EE\Big( \|T_{4,1}\|_{V^0}^{2p}\Big)  \leq &\, C_p \EE \Big( \Big| \int_0^{\tau_1} \| [S(\tau_2-\tau_1) - {\rm Id}] S(\tau_1-s) G(u(s))\|_{V^0}^2
{\rm Tr}(Q) ds\Big|^p\Big)  \nonumber \\
\leq & \, C ({\rm Tr}(Q))^p \EE\Big( \Big| \int_0^{\tau_1}  \|A^{-\frac{1}{2}} [S(\tau_2-\tau_1) - {\rm Id}] \|_{{\mathcal L}(V^0;V^0)}^2
\| A^{\frac{1}{2}} G(u(s))\|_{V^0}^2 ds \Big|^p \Big) \nonumber \\
\leq & \, C \EE\Big( \Big| \int_0^{\tau_1} \big| \tau_2-\tau_1\big| \;  \big[ K_2+K_3\|u(s)\|_{V^1}^2\big] ds \Big|^p\Big)\nonumber  \\
\leq & \, C \big[ 1+\EE(\|u_0\|_{V^1}^{2p}) \big] |\tau_2-\tau_1|^p,
\end{align}
where the last upper estimate is a consequence of \eqref{mom_u_L2} and \eqref{mom_u_V1}. 
A similar easier argument implies
\begin{align}	\label{mom_u_T42}
 \EE\Big( \|T_{4,2}\|_{V^0}^{2p}\Big)  \leq &\, C_p \EE\Big( \Big| \int_{\tau_1}^{\tau_2} \| S(\tau_2-s) G(u(s))\|_{V^0}^2 {\rm Tr}(Q) ds \Big|^p \Big)
 \nonumber \\
 \leq & \, C \big[ 1+ \EE(\|u_0\|_{V^0}^{2p}) \big] \,  \big|\tau_2-\tau_1\big|^p.
\end{align}
The inequalities \eqref{T1-T2}--\eqref{mom_u_T42} complete the proof of \eqref{increm_u_L2}. 

(ii) As in the proof of (i), for $0\leq \tau_1<\tau_2\leq T$, let $\theta(\tau_2)-\theta(\tau_1)=\sum_{i=1}^3 \tilde{T}_i(\tau_1,\tau_2)$, where
\begin{align}		\label{decom_increm_t} 
\tilde{T}_1(\tau_1,\tau_2)=&\, \big[ \SS(\tau_2-\tau_1)-{\rm Id}\big] \SS(\tau_1) \theta_0, 	\nonumber \\
\tilde{T}_2(\tau_1,\tau_2)=&\, -\int_0^{\tau_2} \SS(\tau_2-s) \big( [u(s).\nabla] \theta(s)\big) ds +
\int_0^{\tau_1} \SS(\tau_1-s) \big( [u(s).\nabla] \theta(s)\big) ds 		\nonumber \\
\tilde{T}_3(\tau_1,\tau_2)=&\, \int_0^{\tau_2} \SS(\tau_2-s)\GG(\theta(s)) d\WW(s) -  
\int_0^{\tau_1} \SS(\tau_1-s) \GG(\theta(s)) d\WW(s).  
\end{align}
The inequality \eqref{A(I-S)} implies 
\begin{align}		\label{t_TT1}
\| \tilde{T}_1(\tau_1,\tau_2)\|_{H^0} = &\, \| \AA^{-\frac{1}{2}}  \big[ \SS(\tau_2-\tau_1)-{\rm Id}\big] \SS(\tau_1)  \AA^{\frac{1}{2}} \theta_0\|_{H^0} 
\nonumber \\
\leq & \, C \big| \tau_2-\tau_1|^{\frac{1}{2}} \|\theta_0\|_{H^1}.
\end{align}
Decompose $\tilde{T}_2(\tau_1,\tau_2)=\tilde{T}_{2,1}(\tau_1,\tau_2) + \tilde{T}_{2,2}(\tau_1,\tau_2)$, where
\begin{align*}
 \tilde{T}_{2,1}(\tau_1,\tau_2) = & \, -\int_0^{\tau_1} \big[ \SS(\tau_2-\tau_1)-{\rm Id}]\,  \SS(\tau_1-s)\,  \big( [u(s).\nabla]\theta(s)\big] ds,\\
  \tilde{T}_{2,2}(\tau_1,\tau_2) =&\,  - \int_{\tau_1}^{\tau_2} \SS(\tau_2-s) \,\big( [u(s).\nabla] \theta(s)\big) ds.
\end{align*} 
Let  $\delta \in (0,\frac{1}{2})$; the Minkowski inequality, \eqref{AS}, \eqref{A(I-S)},
 and \eqref{GiMi-ut} applied with $\alpha = \rho = \frac{1}{2}$ imply
\begin{align*}
\|\tilde{T}_{2,1}&(\tau_1,\tau_2) \|_{H^0} \leq  \int_0^{\tau_1} \| \SS(\tau_1-s)\, \big[ \SS(\tau_2-\tau_1)-{\rm Id} \big] \big( [u(s).\nabla]\theta(s)\big)
\|_{H^0} ds \\
\leq & \int_0^{\tau_1} \|\AA^{\frac{1}{2}+ \delta } \SS(\tau_1-s)\|_{{\mathcal L}(H^0;H^0)} \|\AA^{-\frac{1}{2}}[ \SS(\tau_2-\tau_1)-{\rm Id}]\|_{{\mathcal L}(H^0;H^0)}
\|\AA^{-\delta} \big( [u(s).\nabla] \theta(s)\|_{H^0} ds\\
\leq & \, C \int_0^{\tau_1} (\tau_1-s)^{-(\frac{1}{2} + \delta )} \, |\tau_2-\tau_1|^{\frac{1}{2}} 
\, \|A^{\frac{1}{2}} u(s)\|_{V^0} \, \| \AA^{\frac{1}{2}} \theta(s)\|_{H^0} ds\\
\leq & \, C |\tau_2-\tau_1|^{\frac{1}{2}} \, \sup_{s\in [0,T]} \|A^{\frac{1}{2}}u(s) \|_{V^0} \,
\int_0^{\tau_1}  (\tau_1-s)^{-(\frac{1}{2} + \delta )} \,   
\|\AA^{\frac{1}{2}}\theta(s)\|_{H^0}\, ds . 
\end{align*}
Let $p_1\in \big( 2 , 2+\frac{\epsilon}{4p} \big)$, let $\delta \in \big( 0, \frac{1}{2} -\frac{1}{p_1}\big)$. Let $p_2$ be the  conjugate exponent
of $p_1$; we have   $(\frac{1}{2}+\delta)p_2 <1$. Thus, H\"older's inequality for the finite measure 
$(\tau_1-s)^{-(\frac{1}{2}+\delta)} 1_{[0, \tau_1)}(s) ds$
with exponents $2p$ and $\frac{2p}{2p-1}$ and then for $ds$ with conjugate exponents $p_1$ and $p_2$ imply 
\begin{align*}
\|\tilde{T}_{2,1}(\tau_1,\tau_2) \|_{H^0}^{2p}  \leq  &\, 
C \big| \tau_2-\tau_1\big|^{p}  \; \sup_{s\in [0,T]} \| A^{\frac{1}{2}} u(s)\|_{V^0}^{2p} \;  \int_0^{\tau_1} (\tau_1-s)^{-(\frac{1}{2}+\delta)} \| \AA^{\frac{1}{2}}   
\theta(s)\|_{H^0}^{2p} ds \\
& \qquad \times  \Big\{\int_0^{\tau_1} (\tau_1-s)^{-(\frac{1}{2}+\delta)}  ds \Big\}^{2p-1} \\
\leq & \, C\, \big| \tau_2-\tau_1\big|^{p}  \; \sup_{s\in [0,T]} \| A^{\frac{1}{2}} u(s)\|_{V^0}^{2p} \; 
\Big\{ \int_0^{\tau_1} \| \AA^{\frac{1}{2}} \theta(s)\|_{H^0}^{2pp_1} ds \Big\}^{\frac{1}{p_1}}  \\
&\qquad \times \Big\{ \int_0^{\tau_1} (\tau_1-s)^{- (\frac{1}{2} + \delta) p_2} ds \Big\}^{\frac{1}{p_2}}. 
\end{align*}
Since $2pp_1 < 4p+\frac{\epsilon}{2}$ and $2pp_2<4p$,  H\"older's inequality,  Fubini's theorem 
 together with  the upper estimates \eqref{mom_u_V1} 
and   \eqref{sup_E_t}  imply %\eqref{mom_t_H1} imply 
\begin{align}		\label{mom_t_TT21}
\EE \big(  &\|\tilde{T}_{2,1}(\tau_1,\tau_2) \|_{H^0}^{2p}\big)  \leq C \big| \tau_2-\tau_1|^{ p} 
\Big\{ \EE\Big(\sup_{s\in [0,T]} \|A^{\frac{1}{2}}u(s) \|_{V^0}^{2p p_2} \Big) \Big\}^{\frac{1}{p_2}} \nonumber \\
& \times   \Big\{  \int_0^{\tau_1} \EE\ \big( \|\AA^{\frac{1}{2}} 
\theta(s)\|_{H^0}^{2pp_1} \big) ds  \Big\}^{\frac{1}{p_1}}  \leq C\;  \big| \tau_2-\tau_1|^{ p} .
\end{align}
A similar argument proves for $\eta \in (0,1)$
\begin{align*}
\|\tilde{T}_{2,2}(\tau_1,\tau_2) \|_{H^0} \leq &\,  \int_{\tau_1}^{\tau_2} \| \AA^{1-\eta} \SS(\tau_2-s) \|_{\mathcal{L}(H^0;H^0)}
\|\AA^{-(1-\eta)} \big( [u(s).\nabla]\theta(s)\big) \|_{H^0} ds \\
\leq &\, C  \int_{\tau_1}^{\tau_2} (\tau_2-s)^{-1+\eta} \, \|A^{\frac{1}{2}} u(s)\|_{V^0} \, \| \AA^{\frac{1}{2}} \theta(s)\|_{H^0} ds \\
\leq & \, C\, \big| \tau_2-\tau_1|^{\eta} \, \sup_{s\in [0,T]} \|A^{\frac{1}{2}} u(s)\|_{V^0} \; \int_{\tau_1}^{\tau_2} (\tau_2-s)^{-1+\eta} \, 
 \|\AA^{\frac{1}{2}} \theta(s)\|_{H^0}
ds . 
\end{align*}
Let $\eta \in \big( \frac{p}{2p-1},1\big)$; for $\epsilon>0$ let  $p_1,p_2\in (1,+\infty)$ be conjugate exponents such that
 $\big(\frac{1}{\eta}\big) \vee \big( \frac{8p+\epsilon}{4p+\epsilon}\big)<p_1<2$; then $(1-\eta)p_2<1$.
H\"older's inequality implies
\begin{align*}
 \|\tilde{T}_{2,2}(\tau_1,\tau_2) \|_{H^0}^{2p}  \leq &\,C\,  \big| \tau_2-\tau_1|^{(2p-1)\eta} \, \sup_{s\in [0,T]} \|A^{\frac{1}{2}} u(s)\|_{V^0}^{2p}  \; 
 \int_{\tau_1}^{\tau_2} (\tau_2-s)^{-1+\eta} \,   \|\AA^{\frac{1}{2}} \theta(s)\|_{H^0}^{2p} ds \\
 \leq & \, C\,  \big| \tau_2-\tau_1|^{(2p-1)\eta} \, \sup_{s\in [0,T]} \|A^{\frac{1}{2}} u(s)\|_{V^0}^{2p}  \;  \Big\{ \int_{\tau_1}^{\tau_2} 
 (\tau_2-s)^{-(1-\eta) p_2}  ds \Big\}^{\frac{1}{p_2}} \\
 &\qquad \times \Big\{ \int_{\tau_1}^{\tau_2} \|\AA^{\frac{1}{2}} \theta(s)\|_{H_0}^{2pp_1} ds \Big\}^{\frac{1}{p_1}}. 
\end{align*} 
Since  $(2p-1) \eta >p$, $\frac{1}{\eta}<2$; furthermore, $2p p_2<4p+\frac{\epsilon}{2}$ and $2pp_1\leq 4p$. 
 H\"older's inequality together with  the upper estimates \eqref{mom_u_V1} and \eqref{mom_t_H1} imply 
\begin{align}		\label{mom_t_TT22}
\EE \big(  \|\tilde{T}_{2,2}&(\tau_1,\tau_2) \|_{H^0}^{2p}\big)  \leq C  \, \big| \tau_2-\tau_1|^{ p} \, 
\Big\{ \EE\Big( \sup_{s\in [0,T]} \|A^{\frac{1}{2}} u(s)\|_{V^0}^{2pp_2} \Big) \Big\}^{\frac{1}{p_2}} 		\nonumber  \\
 & \quad \times \Big\{ \int_{\tau_1}^{\tau_2}\EE\big(  \| \AA^{\frac{1}{2}} \theta(s)\|_{H^0}^{2pp_1} \big) ds \Big\}^{\frac{1}{p_1}}
 \leq C\,  \big| \tau_2-\tau_1|^{ p} .
\end{align} 
This inequality and \eqref{mom_t_TT21} imply 
\begin{equation} 	\label{mom_t_TT2}
\EE\big( \|\tilde{T}_{2}(\tau_1,\tau_2) \|_{H^0}^{2p}\big) \leq C\,  \big| \tau_2-\tau_1|^{p}.
\end{equation}
Finally, an argument similar to that used to prove \eqref{mom_u_T41} and \eqref{mom_u_T42}, using  the growth condition
\eqref{growthGG-1} and \eqref{mom_t_L2},  implies, 
\begin{equation}	\label{mom_t_TT3}
\EE\big(  \|\tilde{T}_{3}(\tau_1,\tau_2) \|_{H^0}^{2p}\big) \leq C \, \big| \tau_2-\tau_1\big|^p.
\end{equation} 
The upper estimates \eqref{t_TT1}, \eqref{mom_t_TT2} and \eqref{mom_t_TT3}  complete the proof of 
\eqref{increm_t_L2}. 
\end{proof}

We next prove some time regularity for the gradient of the velocity and the temperature. 

\begin{prop}			\label{prop_regularity_H1}
 Let $N\geq 1$ be an integer  and for $k=0, \cdots, N$ set $t_k=\frac{kT}{N}$, $G$ and $\GG$ satisfy conditions
 {\bf (C-u)} and {\bf (C-$\theta$)} respectively, and let $\eta \in (0,1)$. 

(i) Let $p\in [2,\infty)$,  $u_0\in L^{4p}(\Omega;V^1)$ and $\theta_0\in L^{2p}(\Omega;H^0)$. 
Then
there exists  a positive constant $C$ (independent of $N$) such that 
\begin{align}			\label{mom_increm_u_1}
\EE\Big(  \Big|  \sum_{j=1}^N \int_{t_{j-1}}^{t_j}\!\! & \big[ 
%\| A^{\frac{1}{2}}  (u(s) - u(t_j))\|_{V^0}^{2} +   \| A^{\frac{1}{2}}  (u(s) - u(t_{j-1}))\|_{V^0}^{2} 
\| u(s)-u(t_j)\|_{V^1}^2 + \|u(s)-u(t_{j-1})\|_{V^1}^2 \big] ds \Big|^p \Big)
\leq C \Big( \frac{T}{N}\Big)^{\eta p} .
\end{align} 

(ii) Let $p\in [2,\infty)$, $u_0\in L^{16p+\epsilon}(\Omega;V^1)$ and $\theta_0\in L^{16p+\epsilon}(\Omega;H^0)$ for some $\epsilon >0$. Then
 \begin{align}			\label{mom_increm_t_1}
\EE\Big(  \Big|  \sum_{j=1}^N \int_{t_{j-1}}^{t_j}\!\! & \big[ 
%\AA^{\frac{1}{2}}  (\theta(s) - \theta(t_j))\|_{V^0}^{2} +  \| \AA^{\frac{1}{2}}  (\theta(s) - \theta(t_{j-1}))\|_{H^0}^{2} 
\| \theta(s)-\theta(t_j)\|_{H^1}^2 + \| \theta(s)-\theta(t_{j-1})\|_{H^1}^2 
\big] ds \Big|^p \Big)
\leq C \Big( \frac{T}{N}\Big)^{\eta p} .
\end{align} 
\end{prop}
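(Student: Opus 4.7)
The plan is to exploit the mild formulations \eqref{weak_u}--\eqref{weak_t}. Setting $F(r):=-B(u(r),u(r))+\Pi\theta(r)v_2$, for $s\in[t_{j-1},t_j]$ I use the backward decomposition
\[
u(t_j)-u(s) = [S(t_j-s)-\mathrm{Id}]\,u(s) + \int_s^{t_j}\! S(t_j-r) F(r)\, dr + \int_s^{t_j}\! S(t_j-r) G(u(r))\, dW(r),
\]
and the analogous forward expansion based at $t_{j-1}$ for $u(s)-u(t_{j-1})$. I bound each of the four pieces in the $V^1$-norm, integrate over $s\in[t_{j-1},t_j]$, sum over $j$, and finally take the $L^p(\Omega)$-moment using the uniform moment estimates from Propositions \ref{prop_u_V1} and \ref{lem_mom_t_H1}.

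The four contributions are controlled as follows. For the semigroup piece, the spectral bound $\|A^{1/2}[S(t_j-s)-\mathrm{Id}]u(s)\|_{V^0}^2\le C(t_j-s)\|Au(s)\|_{V^0}^2$ yields after summation a term of order $(T/N)\int_0^T\|Au\|_{V^0}^2\,ds$, whose $L^p$-moment is controlled by Proposition \ref{prop_u_V1}. For the nonlinear piece, Lemma \ref{GiMi} with $\alpha=\rho=\tfrac12$ and small $\delta>0$ gives $\|A^{-\delta}B(u,u)\|_{V^0}\le C\|A^{1/2}u\|_{V^0}^2$; combined with $\|A^{1/2+\delta}S(t_j-r)\|_{\mathcal L(V^0;V^0)}\le C(t_j-r)^{-1/2-\delta}$, this produces the rate $(T/N)^{1-2\delta}$, which exceeds $(T/N)^\eta$ for $\delta<(1-\eta)/2$. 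The buoyancy term is similar but easier. For the stochastic piece, under condition (C1) one has $\|A^{1/2}G(u)\|_{\mathcal L(K;V^0)}^2\le K_2+K_3\|u\|_{V^1}^2$, so It\^o's isometry and the Burkholder inequality yield $\EE\|\int_s^{t_j}A^{1/2}S(t_j-r)G(u)\,dW\|_{V^0}^{2p}\le C(t_j-s)^p[1+\EE\sup_{[0,T]}\|u\|_{V^1}^{2p}]$, and Minkowski's integral inequality then controls the $L^p$-norm of the summed integrals by $C(T/N)$. Part (ii) follows the same template with the semigroup $\tilde S$, Lemma \ref{GiMi} applied to $[u\cdot\nabla]\theta$ with $\alpha=\rho=\tfrac12$ (giving $\|\tilde A^{-\delta}[(u\cdot\nabla)\theta]\|_{H^0}\le C\|A^{1/2}u\|_{V^0}\|\tilde A^{1/2}\theta\|_{H^0}$), and condition $(\tilde{\mathbf C}1)$ for the noise.

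The main obstacle is the stochastic term: with only (C), the estimate $\|A^{1/2}S(t_j-r)G(u)\|^2\le C(t_j-r)^{-1}\|G(u)\|_{\mathcal L(K;V^0)}^2$ has a non-integrable singularity at $r=t_j$, so the smoothness hypothesis (C1) --- which is in force via Proposition \ref{prop_u_V1} anyway --- is genuinely needed. A secondary technical point is the forward semigroup term $[S(s-t_{j-1})-\mathrm{Id}]u(t_{j-1})$, which carries the pointwise value $u(t_{j-1})$; it can be handled via the spectral identity $\int_0^{T/N}\|A^{1/2}[S(\sigma)-\mathrm{Id}]v\|_{V^0}^2\,d\sigma\le C(T/N)^{1+2\alpha}\|v\|_{V^{1+2\alpha}}^2$ for small $\alpha>0$, the interpolation $\|v\|_{V^{1+2\alpha}}^2\le C\|v\|_{V^0}^{1-2\alpha}\|Av\|_{V^0}^{1+2\alpha}$, and a H\"older argument on the discrete sum $\sum_j\|Au(t_{j-1})\|^{1+2\alpha}$, controlled in expectation using the $V^2$-integrability from Proposition \ref{prop_u_V1}.
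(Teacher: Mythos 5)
Your decomposition is genuinely different from the paper's: you restart the mild formula at $s$ (resp.\ at $t_{j-1}$), whereas the paper expands both $u(t_j)$ and $u(s)$ from time $0$ (decomposition \eqref{decomp_increm_u} with $\tau_1=s$, $\tau_2=t_j$), so that the factor $S(\tau_2-\tau_1)-{\rm Id}$ always acts either on $S(\tau_1)u_0$ or inside a convolution $\int_0^{\tau_1}S(\tau_1-r)(\cdots)\,dr$. This difference is not cosmetic: it is exactly where your treatment of the forward increment breaks down. In the term $A^{\frac12}[S(s-t_{j-1})-{\rm Id}]\,u(t_{j-1})$ the operator acts on the pointwise value $u(t_{j-1})$, and extracting any positive power of $s-t_{j-1}$ requires regularity of $u(t_{j-1})$ beyond $V^1$. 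Your proposed fix reduces this to the discrete sum $\sum_j\|Au(t_{j-1})\|_{V^0}^{1+2\alpha}$, but no estimate in the paper — nor any provable under $u_0\in L^{4p}(\Omega;V^1)$ — controls moments of $\|Au(t)\|_{V^0}$ at \emph{fixed} times: Proposition \ref{prop_u_V1} only gives $Au\in L^2(0,T;V^0)$ in an integrated sense, and an $L^2$-in-time bound says nothing about the values at the $N$ grid points. The same obstruction reappears, in worse form, for $[\tilde{S}(s-t_{j-1})-{\rm Id}]\,\theta(t_{j-1})$ in part (ii). The paper's expansion from zero avoids this entirely: there $S(\tau_2-\tau_1)-{\rm Id}$ always meets $S(\tau_1)u_0$ or $S(\tau_1-r)$, whose analytic smoothing \eqref{AS} supplies the missing regularity at the cost of an integrable singularity in $\tau_1$ or $r$.

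Two secondary, repairable points. First, for the backward semigroup piece you invoke $\EE\big(|\int_0^T\|Au\|_{V^0}^2\,ds|^p\big)<\infty$ as if it were Proposition \ref{prop_u_V1}; that proposition bounds $\EE\int_0^T\|Au\|_{V^0}^2\big[1+\|A^{1/2}u\|_{V^0}^{2(p-1)}\big]\,ds$, a first moment of a weighted integral, not the $p$-th moment of $\int_0^T\|Au\|_{V^0}^2\,ds$. The latter can be obtained from \eqref{Ito-u} by raising to the power $p$ and using the Burkholder inequality, but this must be carried out. Second, you are right that the stochastic term requires {\bf (C1)} rather than {\bf (C)}; the paper's own proof also uses \eqref{growthG-1} at that point. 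Your handling of the nonlinear, buoyancy and stochastic convolutions over $[s,t_j]$ is correct and, via Minkowski's integral inequality, somewhat cleaner than the paper's H\"older bookkeeping with the $N^{p-1}$ prefactor; but as it stands the forward semigroup term leaves the proof incomplete.
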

\begin{proof}
(i) For $j=1, ...,N$, write  the decomposition \eqref{decomp_increm_u} of $u(t_j)-u(s)$ used in the proof of Lemma \ref{prop_regularity_L2} 
 (that is $\tau_1=s$, $\tau_2=t_j$),
 and apply $A^{\frac{1}{2}}$. 
The upper estimates of the sum of terms $A^{\frac{1}{2}} T_1(s,t_j)$,  $A^{\frac{1}{2}} T_2(s,t_j)$ obtained in the proof of Lemma 2.2 in
 \cite{BeMi_additive} imply for $\eta \in (0,1)$ 
 \begin{equation}		\label{mom_increm_T1-2_V1}
 \EE\Big( \Big| \sum_{j=1}^N \int_{t_{j-1}}^{t_{j}} \big[ \|A^{\frac{1}{2}} T_1(s,t_j)\|_{V^0}^2 + \|A^{\frac{1}{2}} T_2(s,t_j)\|_{V^0}^2\big] ds \Big|^p\Big)
 \leq C(\EE(\|u_0\|_{V_1}^{4p})) \, \Big(\frac{T}{N}\Big)^{\eta p}.
 \end{equation} 
The Minkowski inequality and the upper estimates \eqref{AS} and \eqref{A(I-S)} imply for $\delta \in (0,\frac{1}{2})$ 
\begin{align*}
\|A^{\frac{1}{2}} T_{3,1}(s,t_j) \|_{V^0} \leq & \int_0^{t_j} \| A^{\frac{1}{2} + \delta } S(t_j-s)\|_{{\mathcal L}(V^0;V^0)}  
\, \| A^{-\delta} \big[ S(t_j-s) - \mbox{\rm Id } \big] \|_{{\mathcal L}(V^0;V^0)} \\
&\quad \times \|  \Pi \theta(s) v_2\|_{V^0} \; ds\\
\leq & \; C |t_j-s|^{\delta} \, \sup_{s\in [0,t_j]} \|\theta(s)\|_{H^0}  \int_0^{t_j} (t_1-s)^{-(\frac{1}{2}+\delta)}  ds, 
\end{align*}
Hence we deduce
\begin{align*}
\Big| \sum_{j=1}^N &\int_{t_j{}-1}^{t_{j}} \|A^{\frac{1}{2}} T_{3,1}(s,t_j)\|_{V^0}^2 \, ds \Big|^p \leq  C \Big(\frac{T}{N}\Big)^{2p\delta} \sup_{s\in [0,T]} 
\|\theta(s)\|_{H^0}^{2p}  	\\
&\quad \times \Big| \sum_{j=1}^N \int_{t_{j-1}}^{t_{j}} \Big| \int_0^s (s-r)^{-(\frac{1}{2}+ \delta)} dr \Big|^2 ds \Big|^p\\
\leq &\,  C \Big(\frac{T}{N}\Big)^{2p\delta} \sup_{s\in [0,T]}  \|\theta(s)\|_{H^0}^{2p} \Big| \int_0^T s^{1-2\delta} ds \Big|^p 
\leq C \Big(\frac{T}{N}\Big)^{2p\delta} \sup_{s\in [0,T]}  \|\theta(s)\|_{H^0}^{2p}.
\end{align*}
Using once more the Minkowski inequality and \eqref{AS}, we obtain
\begin{align*}
\Big|  \sum_{j=1}^N \! \int_{t_{j-1}}^{t_{j}}\!\!  &\|A^{\frac{1}{2}} T_{3,2}(s,t_j)\|_{V^0}^2 \, ds \Big|^p \leq   \Big| \sum_{j=1}^N  \! \int_{t_j}^{t_{j+1}} 
\Big| \int_s^{t_j} \|A^{\frac{1}{2}} S(t_j-r) \Pi \theta(r) v_2\|_{V^0} dr\Big|^2  ds \Big|^p \\
\leq & \, C \sup_{r\in [0,T]} \|\theta(r)\|_{H^0}^{2p} \; \Big| \sum_{j=1}^N \int_{t_{j-1}}^{t_{j}} \Big| \int_s^{t_j} (t_j-s)^{-\frac{1}{2}} dr \Big|^2 ds \Big|^p \\
\leq & \, C  \sup_{r\in [0,T]} \|\theta(r)\|_{H^0}^{2p}   \Big(\frac{T}{N}\Big)^{p} .
\end{align*}
The above estimates of $T_{3,1}$ and $T_{3,2}$ together with \eqref{mom_t_L2} imply for $\eta \in (0,1)$
\begin{equation}		\label{mom_increm_T3_V1}
\EE\Big( \Big| \sum_{j=1}^N \int_{t_{j-1}}^{t_{j}} \|A^{\frac{1}{2}} T_{3}(s,t_j)\|_{V^0}^2 \, ds \Big|^p\Big) \leq  C  \Big(\frac{T}{N}\Big)^{\eta p} .
\end{equation}
We next study the stochastic integrals. \\
Using twice H\"older's inequality, the Burkholder inequality, \eqref{AS}, \eqref{A(I-S)} and the growth condition \eqref{growthG-1}, we obtain 
for $\delta \in (0, \frac{1}{2})$ 
\begin{align}		\label{mom_increm_T41_V1}
\EE\Big( \Big|& \sum_{j=1}^N \| A^{\frac{1}{2}} T_{4,1}(s,t_j)\|_{V^0}^2 ds \Big|^p\Big) \leq N^{p-1} \sum_{j=1}^N \EE\Big( \Big| \int_{t_j{}-1}^{t_{j}} \|
A^{\frac{1}{2}} T_{4,1}(s,t_j)\|_{V^0}^2
ds \Big|^p \Big)		\nonumber \\
\leq & N^{p-1} \Big( \frac{T}{N}\Big)^{p-1} \sum_{j=1}^N \! \int_{t_{j-1}}^{t_{j}} \EE\Big( \Big\| \int_0^s A^\delta S(s-r) A^{-\delta} [S(t_j-s)-{\rm Id}] 
A^{\frac{1}{2}} G(u(r)) dW(r) \Big\|_{V^0}^{2p}\Big) ds		\nonumber \\
\leq &\,  C_p T^{p-1} \sum_{j=1}^N \int_{t_{j-1}}^{t_j} \EE\Big( \Big|\int_0^s (s-r)^{-2\delta} (t_j-s)^{2\delta} \|A^{\frac{1}{2}} G(u(r)) \|_{{\mathcal L}(K,V^0)}^2
{\rm Tr}(Q) dr \Big|^p \Big) ds 		\nonumber \\
\leq & \, C \Big( \frac{T}{N}\Big)^{2\delta p} \int_0^T \EE\Big(\int_0^s  (s-r)^{-2\delta} \big[ K_2+K_3 \|u(r)\|_{V^1}^2 \big] dr \Big|^p \Big) ds 		\nonumber \\
\leq &\,  C \, \Big( \frac{T}{N}\Big)^{2\delta p} \Big[ 1+\EE\Big( \int_0^T  \|u(r)\|_{V^1}^{2p} \Big( \int_r^T  (s-r)^{-2\delta}  ds \Big) dr \Big) \Big] \leq C \, \Big( \frac{T}{N}\Big)^{2\delta p} ,
\end{align}
where the last upper estimates are deduced from the Fubini theorem, from \eqref{mom_u_L2} and \eqref{mom_u_V1}. 

A similar argument proves
\begin{align}	\label{mom_increm_T42_V1}	
\EE\Big( \Big|& \sum_{j=1}^N \| A^{\frac{1}{2}} T_{4,2}(s,t_j)\|_{V^0}^2 ds \Big|^p\Big) 
\nonumber \\
 \leq & \, 
T^{p-1}  \sum_{j=1}^N \int_{t_{j-1}}^{t_j} \EE\Big( \Big\| \int_s^{t_j} S(t_j-s) A^{\frac{1}{2}} G(u(r)) dW(r)\Big\|_{V^0}^{2p} \Big) ds 	
		\nonumber\\
\leq & \, C_p \, T^{p-1}  {\rm Tr}(Q)^p \sum_{j=1}^N \int_{t_{j-1}}^{t_j} \EE\Big( \Big| \int_s^{t_j} \big[ K_2 + K_3 \|u(r)\|_{V^1}^2 \big]  dr \Big|^p \Big) ds
	\nonumber \\
\leq & \, C \sum_{j=1}^N \int_{t_{j-1}}^{t_j} \Big( \frac{T}{N}\Big)^{p-1} \int_s^{t_j} \big[ K_2^p+K_3^p  \EE(\|A^{\frac{1}{2}} u(r)\|_{V^0}^{2p})\big]  dr   ds 
\nonumber \\
\leq & \, C \Big( \frac{T}{N}\Big)^{p-1} \sum_{j=1}^N \int_{t_{j-1}}^{t_j}  \big[ K_2^p+K_3^p  \EE(\|A^{\frac{1}{2}} u(r)\|_{V^0}^{2p})\big] 
\Big(\int_r^{t_j} ds \Big) dr 		\nonumber \\
\leq &\,  C  \Big( \frac{T}{N}\Big)^{ p} \Big[ 1+ \int_0^T \EE( \|A^{\frac{1}{2}} u(s)\|_{V^0}^{2p})  ds \Big] \leq C \Big( \frac{T}{N}\Big)^{ p}.
\end{align}  
The inequalities \eqref{mom_increm_T41_V1} and \eqref{mom_increm_T42_V1} imply for $\eta \in (0,1)$,
\begin{equation} 	\label{mom_increm_T4_V1}
\EE\Big( \Big| \sum_{j=1}^N\int_{t_{j-1}}^{t_j}  \| A^{\frac{1}{2}} T_{4}(s,t_j)\|_{V^0}^2 ds \Big|^p\Big)  \leq C \Big( \frac{T}{N}\Big)^{\eta p}.
\end{equation} 
The above arguments \eqref{mom_increm_T1-2_V1}, \eqref{mom_increm_T3_V1} and \eqref{mom_increm_T4_V1}   prove similar inequalities 
when replacing $T_i(s,t_j)$ by $T_i(t_{j-1},s)$ for $i=1, ...,4$ and $j=1, ...,N$.
Using \eqref{increm_u_L2}, this concludes the proof of \eqref{mom_increm_u_1}.  

(ii) As above, we apply $\AA^{\frac{1}{2}}$ to the terms $\tilde{T}_i(s,t_j), i=1,2,3$ of the decomposition \eqref{decom_increm_t} of $\theta(t_j)-\theta(s)$
introduced in the proof of  Proposition \ref{prop_regularity_L2} (ii). 
For $\delta \in (0,\frac{1}{2})$, the inequalities \eqref{AS} and \eqref{A(I-S)} imply 
\begin{align*}	%\label{incremH1_TT1_t}
\Big| \sum_{j=1}^N \int_{t_{j-1}}^{t_j} \!\! \| \AA^{\frac{1}{2}} \SS(s) \big[ \SS(\tau_j-s) - {\rm Id} \big] \theta_0\|_{H^0}^2 \Big|^p 
\leq &\, \Big| \sum_{j=1}^N \int_{t_{j-1}}^{t_j}\!\!  \| \AA^\delta \SS(s) \AA^{-\delta} \big[ \SS(\tau_j-s) - {\rm Id} \big] 
\AA^{\frac{1}{2}} \theta_0\|_{H^0}^2 \Big|^p 	 \\
\leq & \, C \Big| \sum_{j=1}^N \int_{t_{j-1}}^{t_j} \! s^{-2\delta} \Big( \frac{T}{N}\Big)^{2\delta} \|\AA^{\frac{1}{2}} \theta_0\|_{H^0}^2 \, ds \Big|^p	 \\
\leq & \, C \Big( \frac{T}{N}\Big)^{2\delta p}  \, \|\AA^{\frac{1}{2}} \theta_0\|_{H^0}^{2p} \Big| \int_0^T s^{-2\delta} ds \Big|^p .
\end{align*}
Hence for $\eta \in (0,1)$, 
\begin{equation}\label{incremH1_TT1_t}
\EE\Big( \Big| \sum_{j=1}^N \int_{t_{j-1}}^{t_j} \| \AA^{\frac{1}{2}} \SS(s) \big[ \SS(\tau_j-s) - {\rm Id} \big] \theta_0\|_{H^0}^2 \Big|^p \Big)
\leq C \EE\big( \|\theta_0\|_{H^0}^{2p}\big)\, \Big( \frac{T}{N}\Big)^{\eta p}.
\end{equation} 
Let $\beta\in (0,\frac{1}{2})$ and $\delta \in (0, \frac{1}{2}-\delta)$. The Minkowski inequality, \eqref{AS}, \eqref{A(I-S)} and \eqref{GiMi-ut} applied 
with $\alpha=\rho=\frac{1}{2}$ imply for $s\in [t_{j-1},t_j]$
\begin{align*}
\Big\| \int_0^s \AA^{\frac{1}{2}}& \SS(s-r) \big[ \SS(t_j-s)-{\rm Id}\big] \big( [u(r).\nabla] \theta(r) \big) dr \Big\|_{H^0}  \\
\leq & \, \int_0^s \|\AA^{\frac{1}{2} + \beta +\delta} \SS(s-r) \, \AA^{-\beta} \big[ \SS(t_j-s)-{\rm Id}\big]  \,
 \AA^{-\delta} \big( [u(r).\nabla] \theta(r) \big) \, \|_{H^0} dr \\
 \leq & \, C \int_0^s (s-r)^{-(\frac{1}{2}+\beta+\delta)} \Big( \frac{T}{N}\big)^\beta \|A^{\frac{1}{2}}  u(r)\|_{V^0} \,
   \| \AA^{\frac{1}{2}} \theta(r) \|_{H^0}\,  dr.
\end{align*}
Therefore, the Cauchy-Schwarz inequality and Fubini's theorem imply
\begin{align*}
\EE\Big(& \Big| \sum_{j=1}^N \int_{t_{j-1}}^{t_j} \|\AA^{\frac{1}{2}} \tilde{T}_{2,1}(s,t_j)\|_{H^0}^2 ds \Big|^p\Big) \\
\leq & \, C \Big( \frac{T}{N}\Big)^{2\beta p} \; \EE\Big[ \sup_{s\in [0,T]} \|A^{\frac{1}{2}} u(s)\|_{V^0}^{2p} \Big| \sum_{j=1}^N \int_{t_j}^{t_{j+1}}
\Big( \int_0^s 
(s-r)^{-(\frac{1}{2} + \beta + \delta)} \|\AA^{\frac{1}{2}} \theta(r)\|_{H^0}^2 \Big)  \\
&\qquad \times \Big( \int_0^s  (s-r)^{-(\frac{1}{2} + \beta + \delta)}  dr\Big) ds\Big|^p \Big]\\
\leq & \, C \Big( \frac{T}{N}\Big)^{2\beta p} \; \EE\Big[ \sup_{s\in [0,T]} \|A^{\frac{1}{2}}  u(s)\|_{V^0}^{2p} 
\Big|\Big( \int_0^T \!\! \| \AA^{\frac{1}{2}} \theta(r)\|_{H^0}^2 ds \Big)
\Big( \int_r^T (s-r)^{-(\frac{1}{2}+\beta+\delta)} ds \Big) dr \Big|^p \Big] \\
\leq & \, C \Big( \frac{T}{N}\Big)^{2\beta p}\, \Big\{ \EE\Big( \sup_{s\in [0,T]} \|A^{\frac{1}{2}}  u(s)\|_{V^0}^{4p} \Big)\Big\}^{\frac{1}{2}}
\Big\{ \int_0^T \EE\big( \|\AA^{\frac{1}{2}} \theta(r)\|_{H^0}^{4p} \big) dr \Big\}^{\frac{1}{2}}
\end{align*} 
The upper estimates \eqref{mom_u_V1} and \eqref{sup_E_t} imply for $\eta\in (0,1)$ 
\begin{equation}	\label{mom_increm_tH1_T21}
\EE\Big( \Big| \sum_{j=1}^N \int_{t_{j-1}}^{t_j} \|\AA^{\frac{1}{2}} \tilde{T}_{2,1}(s,t_j)\|_{H^0}^2 ds \Big|^p\Big) \leq C \Big( \frac{T}{N}\Big)^{\eta p}.
\end{equation}
Using the Minkowski inequality, \eqref{AS} and \eqref{GiMi-ut} with $\alpha=\rho=\frac{1}{2}$, and Fubini's theorem, 
we obtain for $\delta \in (0,\frac{1}{2})$
\begin{align*}
\Big| \sum_{j=1}^N &\int_{t_{j-1}}^{t_j} \!\! \!\|\AA^{\frac{1}{2}} \tilde{T}_{2,2}(s,t_j)\|_{H^0}^2 ds \Big|^p \leq  \Big|\sum_{j=1}^N  \int_{t_{j-1}}^{t_j} 
\! \Big| \int_s^{t_j} (t_j-r)^{-(\frac{1}{2}+\delta)} \|A^{\frac{1}{2}} u(r)\|_{V^0} \|\AA^{\frac{1}{2}} \theta(r)\|_{H^0} dr \Big|^2 ds \Big|^p\\
\leq & \, C \sup_{r\in [0,T]} \|A^{\frac{1}{2}} u(r)\|_{V^0}^{2p} \Big|\sum_{j=1}^N  \int_{t_{j-1}}^{t_j} \!\! 
\Big( \int_{s}^{t_j} (t_j-r)^{-(\frac{1}{2}+\delta)} \|\AA^{\frac{1}{2}}
\theta(s)\|_{H^0}^2 dr\Big)\\
&\quad \times  \Big( \int_{s}^{t_j} (t_j-r)^{-(\frac{1}{2}+\delta)} dr\Big) ds \Big|^p \\
\leq & \, C \sup_{r\in [0,T]} \|A^{\frac{1}{2}} u(r)\|_{V^0}^{2p} \Big|\sum_{j=1}^N  \int_{t_{j-1}}^{t_j} \|\AA^{\frac{1}{2}} \theta(r)\|_{H^0}^2 
\Big(\int_r^{t_j} ds \Big) dr \Big|^p \\
\leq & \, C  \sup_{r\in [0,T]} \|A^{\frac{1}{2}} u(r)\|_{V^0}^{2p} \Big( \frac{T}{N}\Big)^{ p} \int_0^T \| \AA^{\frac{1}{2}} \theta(s)\|_{H^0}^{2p} dr
\end{align*}
The Cauchy-Schwarz inequality, \eqref{mom_u_V1} and \eqref{sup_E_t} imply
\begin{equation}	\label{mom_increm_tH1_TT22}
\EE\Big( \Big| \sum_{j=1}^N \int_{t_{j-1}}^{t_j} \|\AA^{\frac{1}{2}} \tilde{T}_{2,2}(s,t_j)\|_{H^0}^2 ds \Big|^p\Big) \leq C \Big( \frac{T}{N}\Big)^{p}.
\end{equation}
Finally, arguments similar to that used to prove \eqref{mom_increm_T4_V1}  imply for $\eta \in (0,1)$
\begin{equation}		\label{mom_increm_TT3_V1} 
\EE\Big( \Big| \sum_{j=1}^N  \int_{t_{j-1}}^{t_j} \|\AA^{\frac{1}{2}} \tilde{T}_{3}(s,t_j)\|_{V^0}^2 ds \Big|^p\Big)  \leq C \Big( \frac{T}{N}\Big)^{\eta p}.
\end{equation} 
The upper estimates \eqref{incremH1_TT1_t}--\eqref{mom_increm_TT3_V1} conclude the proof of
\[ \EE\Big( \Big| \sum_{j=1}^N  \int_{t_{j-1}}^{t_j} \|\AA^{\frac{1}{2}} \big[ \theta(t_j)-\theta(s)\big]\|_{H^0}^2 ds \Big|^p \Big) \leq 
C \Big( \frac{T}{N}\Big)^{\eta p}, \quad \eta \in (0,1). 
\]
Using \eqref{increm_t_L2},  similar argument  completes the proof of  \eqref{mom_increm_t_1}. 
\end{proof}

\section{ The  implicit time  Euler scheme} \label{sEuler}
We  first prove the existence of the fully time implicit time Euler scheme  $\{ u^k ; k=0, 1, ...,N\}$  and $\{ \theta^k ; k=0, 1, ...,N\}$ defined by
\eqref{Euler_u}--\eqref{Euler_t}. Set $\Delta_l W:= W(t_l)-W(t_{l-1})$ and $\Delta_l \WW=\WW(t_l)-\WW(t_{l-1})$, $l=1, ...,N$.

%The fully implicit time Euler scheme $\{ u^k ; k=0, 1, ...,N\}$  and $\{ \theta^k ; k=0, 1, ...,N\}$ is defined by $u^0=u_0$, $\theta^0=\theta_0$,
% and for $\varphi \in V_1$, $\psi\in H^1$ and $l=1, ...,N$, 
%\begin{align}	\label{Euler_u}
%\Big( u^l-u^{l-1} &+ h \nu A u^l + h B\big( u^l,u^l\big)   , \varphi\Big) = \big(\Pi \theta^{l-1} v_2, \varphi) h  \nonumber \\
%& + \big( G(u^{l-1}) [W(t_l)-W(t_{l-1})]\, , \, \varphi),  \\
%\Big( \theta^l-\theta^{l-1} &+ h \kappa \AA \theta^l +h [ u^{l-1}.]\theta^l , \psi\Big) = \big( \GG(\theta^{l-1}) [\WW(t_l)-\WW(t_{l-1})]\, , \, \varphi). 
%\label{Euler_t}
%\end{align}
\subsection{Existence of the scheme} 
%The following proposition states the existence and uniqueness of the sequences $\{ u^k\}_{k=0, ...,N}$  and $\{ \theta^k\}_{k=0, ..., N}$.

\noindent {\it Proof of Proposition \ref{prop_ul}}
%\begin{prop} 		\label{prop_ul}		
%Let  Condition {\bf (G)} and {\bf ($\tilde{\rm \bf G}$)} be satisfied, $u_0\in V^0$ and $\theta^0\in H^0$ a.s.
 %The time fully  implicit scheme \eqref{Euler_u}--\eqref{Euler_t}  has a unique 
 %solution $\{u^l\}_{l=1, ...,N} \in V^1$,  $\{\theta^l\}_{l=1, ...,N} \in H^1$.
 %\end{prop}
%\begin{proof}
 The proof is divided in two steps.\\
{\bf Step 1} For technical reasons we consider a Galerkin approximation.
Let $\{e_l\}_l$ denote an orthonormal basis of $V^0$ made of elements of $V^2$ which are orthogonal in $V^1$ (resp. 
 $\{\tilde{e}_l\}_l$ denote an orthonormal basis of $H^0$ made of elements of $H^2$ which are orthogonal in $H^1$). \\
 For $m=1,2, ...$ let
 $V_m=\mbox{ \rm span }(e_1, ..., e_m) \subset V^2$  and let $P_m:V^0\to V_m$ denote the projection from 
$V^0$ to $V_m$. Similarly, let $\tilde{H}_m=\mbox{ \rm span }(\tilde{e}_1, ...,\tilde{e}_m) \subset H^2$  and let
 $\tilde{P}_m:H^0\to \tilde{H}_m$ denote the projection from 
$H^0$ to $\tilde{H}_m$. 

 In order to find a solution to  \eqref{Euler_u}--\eqref{Euler_t}  we project these equations on $V_m$ and $\tilde{H}_m $ respectively, that
is we define by induction  $\{u^k(m)\}_{k=0, ...,N} \in V_m$ and $\{\theta^k(m)\}_{k=0, ...,N} \in \tilde{H}_m$ such that 
$u^0(m)= P_m(u_0)$, $\theta^0(m)=\tilde{P}_m(\theta_0)$, and for $k=1, ...,N$, $\varphi \in V_m$ and $\psi\in \tilde{H}_m$
\begin{align}		\label{def-ukm}
\big( u^k(m)-u^{k-1}(m), \varphi\big)  &+ h \Big[ \nu \big( A^{\frac{1}{2}} u^k(m), A^{\frac{1}{2}}  \varphi)  + 
 \big\langle B\big( u^k(m), u^k(m)\big), \varphi \big\rangle  \nonumber  \\
& = h \big( \Pi \theta^{k-1} v_2 , \varphi\big) +  \big( G(u^{k-1}(m)) \Delta_k W \, , \, \varphi\big) \\
\big( \theta^k(m)-\theta^{k-1}(m), \psi \big)  &+ h \Big[ \kappa \big( \AA^{\frac{1}{2}} \theta^k(m), \AA^{\frac{1}{2}}  \psi)  + 
 \big\langle [ u^{k-1}k(m) . \nabla] \theta^k(m) \big), \psi \big\rangle  \nonumber  \\
 &= \big( \GG(\theta^{k-1}(m)) \Delta_k \WW \, , \, \psi\big)			\label{def-tkm}
\end{align}
For almost every $\omega$ set $R(0,\omega):= \|u_0(\omega)\|_{V^0}$ and $R(0,\omega):= \|\theta_0(\omega)\|_{H^0}$. 
Fix  $k=1, ...,N$ and suppose that  for $j=0, ..., k-1$ the ${\mathcal F}_{t_j}$-
measurable random variables 
$u^j(m)$and $\theta^j(m)$  have been defined, and that  
\[ R(j,\omega):=\sup_{m\geq 1}
\|u^j(m,\omega)\|_{\LL^2}<\infty\quad   {\rm and }\quad  \tilde{R}(j,\omega):=\sup_{m\geq 1}
\|\theta^j(m,\omega)\|_{\LL^2}<\infty \;\]
 for almost every  $\omega$. We prove that $u^k(m)$ and $\theta^k(m)$ exists and satisfy  $ \sup_{m\geq 1} \|u^k(m,\omega)\|_{V^0}<\infty$
 and $ \sup_{m\geq 1} \|\theta^k(m,\omega)\|_{H^0}<\infty$ a.s. 

For $\omega \in \Omega$ let $\Phi^k_{m,\omega}:V_m\to V_m$ (resp. $\tilde{\Phi}^k_{m,  \omega}$) be defined for $f\in V_m$  
(resp. for $\tilde{f}\in \tilde{H}_m$) as the solution of
\begin{align*}
\big(\Phi^k_{m,\omega}(f) &, \varphi\big) = \; \big( f-u^{k-1}(m,\omega), \varphi\big)  + h \Big[ \nu \big( A^{\frac{1}{2}}  f, A^{\frac{1}{2}} \varphi\big) 
+ \big\langle P_m B(f,f),\varphi
\big\rangle  \\
&\quad  - \big( \Pi \theta^{k-1}(m) v_2,  \varphi\big)\Big] %\\
%&
- \big( P_m G(u^{k-1}(m,\omega)) \Delta_kW(\omega), \varphi \big), \qquad \forall \varphi \in V_m\\
\big(\tilde{\Phi}^k_{m,\omega}(\tilde{f}) , \psi\big) = &\; \big( \tilde{f}-\theta^{k-1}(m,\omega), \psi\big)  + h \Big[ \kappa \big( \AA^{\frac{1}{2}}  \tilde{f},
\AA^{\frac{1}{2}}  \psi\big)  + \big\langle [u^{k-1}(m).\AA^{\frac{1}{2}} ] \tilde{f}], \psi\big\rangle \\
&\quad - \big( \tilde{P}_m \GG(\theta^{k-1}(m,\omega)) \Delta_k \WW(\omega), \psi \big), \qquad \forall \psi \in \tilde{H}_m. 
\end{align*}
Then the Cauchy-Schwarz and Young inequalities imply 
\begin{align*}
 \big| \big( u^{k-1}(m,\omega),f\big)\big| \leq&\,  \frac{1}{4} \|f\|_{V^0}^2 + \|u^{k-1}(m,\omega)\|_{V^0}^2,  \\
 \big| \big( \theta^{k-1}(m,\omega),\tilde{f}\big)\big| \leq &\, \frac{1}{4} \|\tilde{f} \|_{H^0}^2 + \|\theta^{k-1}(m,\omega)\|_{H^0}^2, \\
 \big| \big( \Pi \theta^{k-1}(m,\omega) , f\big) \big| \leq &\, \frac{1}{4} \| {f} \|_{V^0}^2 + \|\theta^{k-1}(m,\omega)\|_{H^0}^2, \\
 \big| \big( G(u^{k-1}(m,\omega)) \Delta_k W , f\big) \leq &\,  \frac{1}{4} \| {f} \|_{V^0}^2 + \|G(u^{k-1}(m,\omega))\|_{{\mathcal L}(K,V^0)}^2
 \| \Delta_k W\|_K^2\\
\leq  &\,  \frac{1}{4} \| {f} \|_{V^0}^2 + \big[ K_0+K_1\|u^{k-1}(m,\omega)\|_{V^0}^2\big] \|\Delta_k W\|_K^2, \\
\big| \big( \GG(\theta^{k-1}(m,\omega)) \Delta_k \WW , \tilde{f}\big) 
 \leq &\,  
\frac{1}{4} \| \tilde{f} \|_{H^0}^2 + \|\GG(\theta^{k-1}(m,\omega)) \|_{{\mathcal L}(\KK,H^0)}^2
 \| \Delta_k \WW\|_K^2\\
\leq  &\,  \frac{1}{4} \| \tilde{f} \|_{H^0}^2 + \big[ \tilde{K}_0+\tilde{K}_1\|u^{k-1}(m,\omega)\|_{H^0}^2\big] \|\Delta_k \WW\|_{\KK}^2. 
\end{align*} 
If 
\begin{align*}
 \|f\|_{V^0}^2 =R^2(k,\omega) :=  &\, 4\Big[  R^2(k-1,\omega) +  \big( h \tilde{R}(k-1,\omega)\big)^2  \\
 &\qquad 
 + \big[K_0 + K_1\ R^2(k-1,\omega) \big] \|\Delta_k W(\omega)\|^2_{K}
\Big) \Big], \\
\|\tilde{f}\|_{H^0}^2 = \tilde{R}^2(k,\omega) := &\,  2\Big[ \tilde{R}^2(k-1,\omega) + \big( \tilde{K}_0 + \tilde{K}_1 \tilde{R}^2(k-1,\omega)\big)
\| \Delta_k \WW(\omega)\|_{\KK}^2\Big], 
\end{align*}
we deduce 
\begin{align*}
 \big( \Phi^k_{m,\omega}(f) ,f\big) \geq &\frac{1}{4} \|f\|_{\LL^2}^2 -   \|u^{k-1}(m,\omega)\|_{\LL^2}^2  + h \nu \|A^{\frac{1}{2}} f \|_{V^0}^2 
 - h^2 \| \theta^{k-1}(m,\omega)\|_{H^0}^2  \\
 & \quad - \big[ K_0 + K_1  \|u^{k-1}(m,\omega)\|_{V^0}^2 \big] \|\Delta_k W(\omega)\|^2_K  \geq 0 \\
 \big( \tilde{\Phi}^k_{m,\omega}(f) ,\tilde{f}\big) \geq &\frac{1}{2} \|\tilde{f} \|_{H^0}^2 -   \|\theta^{k-1}(m,\omega)\|_{H^0}^2  
 + h\|\AA^{\frac{1}{2}} \tilde{f}\|_{H^0}^2 \\
 &\quad - \big[ \tilde{K}_0 + \tilde{K}_1  \|\theta^{k-1}(m,\omega)\|_{H^0}^2 \big] \|\Delta_k \WW(\omega)\|^2_{\tilde{K}}
   \geq 0 .
 \end{align*}
 Using \cite[Cor 1.1]{GirRav} page 279, which can be deduced from Brouwer's theorem,  we deduce the existence of an element $u^k(m,\omega) \in V(m)$
(resp. $\theta^k(m,\omega)\in \tilde{H}(m)$), such that $\Phi^k(m,\omega)(u^k(m,\omega))=0$ (resp. $\tilde{\Phi}^k(m,\omega)(\theta^k(m,\omega))=0$)
and $\| u^k(m,\omega)\|_{V^0}^2 \leq R^2(k,\omega)$ (resp. $\| \theta^k(m,\omega)\|_{H^0} \leq \tilde{R}^2(k,\omega)$) a.s. 
Note that these elements $u^k(m,\omega)$ and $\theta^k(m,\omega)$ need not be unique. 
Furthermore, the random variables $u^k(m)$ and $\theta^k(\omega)$ are ${\mathcal F}_{t_k}$-measurable. 

The definition of $u^k(m)$  (resp. $\theta^k(m)$) implies that it is a solution to \eqref{def-ukm} (resp. \eqref{def-tkm}).
 Taking $\varphi = u^k(m)$ in \eqref{def-ukm}, using the antisymmetry  property \eqref{B}  and the Young
inequality, we obtain 
\begin{align*}
\| u^k(m)&\|_{V^0}^2 + h\, \nu  \|A^{\frac{1}{2}}  u^k(m)\|_{V^0}^2 =  \big( u^{k-1}(m), u^k(m)\big)  + h\, \big( \Pi \theta^{k-1}(m) v_2, u^k(m)\big)  \\
&+  \big( G(u^{k-1}(m) \Delta_k W, u^k(m)\big)
\\
&\leq \frac{3}{4} \|u^k(m)\|_{V^0}^2 + \|u^{k-1}(m)\|_{V^0}^2  
+ \| \theta^{k-1}(m)\|_{H^0}^2 + \big[ K_0+K_1 \|u^{k-1}(m)\|_{V^0}^2\big]
\|\Delta_k W\|_{K}^2.
\end{align*} 
Hence a.s. 
\begin{align*}
 \sup_{m\geq 1} \Big[ \frac{1}{4} \|u^k(m,\omega)\|_{V^0}^2 &+ h\, \nu \|A^{\frac{1}{2}}  u^k(m,\omega)\|_{V^0}^2\Big]  \leq  R^2(k-1,\omega) 
 +  \tilde{R}^2(k-1,\omega)
\\
& + \big[ K_0 + K_1  R^2(k-1,\omega)\big]  \| \Delta_k W(\omega)\|_{K}^2 .
\end{align*}  
A similar computation using $\psi=\theta^k(m)$ in \eqref{def-tkm} implies
\begin{align*}
\sup_{m\geq 1} \Big[ \frac{1}{2} \|\theta^k(m,\omega)\|_{H^0}^2 &+ h\, \kappa \|\AA^{\frac{1}{2}}  \theta^k(m,\omega)\|_{H^0}^2 \Big] \leq
\tilde{R}^2(k-1) + \big[ \tilde{K}_0 + \tilde{K}_1  \tilde{R}^2(k-1)\big] \| \Delta_k \WW\|_{\KK}. 
\end{align*} 
Therefore, for fixed $k$ and almost every $\omega$, the sequence $\{u^k(m,\omega)\}_m$ is bounded in $V^1$; it has a
subsequence (still denoted $\{u^k(m,\omega)\}_m$) which converges weakly in $V^1$ to $\phi_k(\omega)$. The random variable
$\phi_k$ is ${\mathcal F}_{t_k}$-measurable. Similarly,  for fixed $k$ and almost every $\omega$, the sequence
 $\{\theta^k(m,\omega)\}_m$ is bounded in $H^1$; it has a
subsequence (still denoted $\{\theta^k(m,\omega)\}_m$) which converges weakly in $H^1$ to $\tilde{\phi}_k(\omega)$ which is 
${\mathcal F}_{t_k}$-measurable.

Since $D$ is bounded, the embedding of  $V^1$  in $V^0$ (rsp. of $H^1$ in $H^0$) is compact; hence the subsequence 
$\{u^k(m,\omega)\}_m$ converges strongly to  $\phi_k(\omega)$ in $V^0$ (resp. $\{ \theta^k(m,\omega)\}_m$ converges strongly to
$\tilde{\phi}_k(\omega)$ in $H^0$). \\
 %Furthermore, in dimension 3 the Sobolev embedding theorem implies $\HH^2 \subset \LL^\infty$.
 {\bf Step 2} We next prove that the pair $(\phi_k, \tilde{\phi}_k)$ is a solution to \eqref{Euler_u}--\eqref{Euler_t}.
  By definition $u^0(m)$ converges strongly to $u_0$ in $V^0$, and $\theta^0(m)$ converges strongly to $\theta_0$ in $H^0$. 
  We next prove by induction on $k$ that the pair $(\phi^k, \tilde{\phi}^k)$ solves \eqref{Euler_u}--\eqref{Euler_t}.
  Fix a positive integer $m_0$ and consider the equation \eqref{def-ukm} for $k=1, ..., N$, 
   $\varphi \in V_{m_0}$,  and $m\geq m_0$.
  As $m\to \infty$ we have a.s. 
  \begin{align*}
  & \big(u^k(m)-u^{k-1}(m) , \varphi) \to \big(\phi^k-\phi^{k-1} , \varphi), \quad \big( A^{\frac{1}{2}} u^k(m), A^{\frac{1}{2}} \phi\big) \to  
  \big( A^{\frac{1}{2}} \phi^k, A^{\frac{1}{2}} \phi\big), \\
  &\big( \Pi \theta^{k-1}(m) v_2, \varphi\big ) = \big(  \theta^{k-1}(m) v_2, \varphi\big )  \to \big( \tilde{\phi}^k v_2, \varphi). 
  \end{align*}
  Furthermore, the antisymmetry of $B$ \eqref{B} and the Gagliardo-Nirenberg inequality \eqref{GagNir} yield a.s. 
  % \eqref{fgh1} with $f=g=u^k(m)-\phi^k$ and $h=\nabla \varphi$ yield a.s. 
  % for  $\alpha \in (1,\frac{3}{2}]$
  \begin{align*}
  \big| \big\langle B\big(&u^k(m),u^k(m)\big) - B(\phi^k,\phi^k), \varphi \big\rangle \big| \\
   &\leq \; \big| \big\langle B\big(u^k(m)-\phi^k, \varphi \big), u^k(m)  
  \big\rangle \big| +\big| \big\langle B\big(\phi^k, \varphi \big), u^k(m)  -\phi^k \big\rangle \big| \\
   &\leq \; \|A^{\frac{1}{2}}  \varphi\|_{V^0} \|u^k(m)-\phi^k\|_{\LL^4} \, \big[ \|u^k(m)\|_{\LL^4} + \| \phi^k\|_{\LL^4}\big] \\
   &\leq \; C \; \|\varphi\|_{V^0}  \big[ \max_{m}\|u^k(m)\|_{V^1} + \|\phi^k\|_{V^1}\big]  ] \| A^{\frac{1}{2}} u^k(m)-\phi^k\|_{V^0}^{\frac{1}{2}}
  \| u^k(m)-\phi^k\|_{V^0}^{\frac{1}{2}}  \to 0
  \end{align*} 
 as $m\to \infty$.
Finally, the Cauchy-Schwarz inequality and the Lipschitz condition \eqref{LipG} imply
\begin{align*}
\big| \big(  \big[ G\big( u^{k-1}(m)\big)  - G\big(  \phi^{k-1} \big)  \big] \Delta_kW, \varphi\big) \big|  &
\leq \|\varphi\|_{V^0}
 \| G(u^{k-1}(m)- G(\phi^{k-1})  \|_{{\mathcal L}(K;V^0)}  \| \Delta_kW\|_K \\
 &\leq  \sqrt{L_1}\, \|\varphi\|_{\LL^2}\, \|u^{k-1}(m)-\phi^{k-1}\|_{\LL^2} \, \| \Delta_kW\|_K  \to 0
\end{align*}
as $m\to \infty$. Therefore, letting $m\to \infty$ in \eqref{def-ukm}, we deduce 
\begin{align*}
\Big( \phi^k -\phi^{k-1}  &+ h \nu A \phi^k + h B\big( \phi^k ,\phi^k \big)  , \varphi\Big) % \nonumber \\&
= \big( \Pi \theta^{k-1} v_2, \varphi\big) + \big( G(\phi^{k-1} ) \Delta_k W \, , \, \varphi), \quad \forall \varphi \in V_{m_0}. 
\end{align*} 
Since $\cup_{m_0} V_{m_0}$ is dense in $V$, we deduce that $\{\phi^k\}_{k=0, ...,N}$ is a solution to \eqref{Euler_u}. 

A similar argument proves that $\tilde{\phi}^k$ is a solution to \eqref{Euler_t}. This concludes the proof.
%\end{proof}
\hfill $\Box$
\subsection{Moments of the Euler scheme}
We next prove upper bounds of moments of $u^k$ and $\theta^k$ uniformly in $k=1, ..., N$. 
\begin{prop}		\label{prop_mom_scheme}
Let $G$ and $\tilde{G}$ satisfy the condition {\bf (C-u)(i)} and {\bf (C-$\theta$)(i)} respectively. Let $K\geq 1$ be an integer, and let
 $u_0\in L^{2^K}(\Omega;V^0) $ and
$\theta_0\in L^{2^K}(\Omega;H^0)$ respectively. Let $\{u^k\}_{k=0, ...,N}$ and $\{\theta_k\}_{k=0, ..., N}$ be solution of \eqref{Euler_u} and
\eqref{Euler_t} respectively. Then
\begin{align}		\label{mom_scheme_0}
\sup_{N\geq 1} \EE\Big(   \max_{0\leq L\leq N} \|u^L\|_{V^0}^{2^K} + \max_{0\leq L \leq N} \|\theta^L\|_{H^0}^{2^K} \Big)  <\infty \\
\sup_{N\geq 1} \EE\Big( h \sum_{l=1}^N \|A^{\frac{1}{2}} u^l\|_{V^0}^2 \|u^l\|_{V^0}^{2^K-2} +  h \sum_{l=1}^N \|\AA^{\frac{1}{2}} \theta^l\|_{H^0}^2 
\|\theta^l\|_{H^0}^{2^K-2}  <\infty, 	\label{mom_scheme_1} 
%\\
%\sup_{N\geq 1} \EE\Big( \sum_{l=1}^N \|u^k-u^{k-1}\|_{V^0}^2 \|u^k\|_{V^0}^{2^K-2} + \sum_{l=1}^N \|\theta^k-\theta^{k-1}\|_{H^0}^2
% \|\theta^k\|_{H^0}^{2^K-2}
%\Big) <\infty. 		\label{mom-increm-scheme}
\end{align} 
\end{prop}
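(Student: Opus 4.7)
The plan is to run a discrete-in-time energy argument mirroring the It\^o calculus used for the continuous equations, and then boost the exponent from $2$ to $2^K$ by an induction whose elementary step squares the basic energy inequality.

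\emph{Base case $K=1$.} Test \eqref{Euler_u} with $\varphi=u^l$ and \eqref{Euler_t} with $\psi=\theta^l$. The polarization identity $2(a-b,a)=\|a\|^2-\|b\|^2+\|a-b\|^2$ together with $\langle B(u^l,u^l),u^l\rangle=0$ from \eqref{B} and $\langle [u^{l-1}\cdot\nabla]\theta^l,\theta^l\rangle=0$ from \eqref{antisym-ut} yields the discrete energy identities
\begin{align*}
\|u^l\|_{V^0}^2 - \|u^{l-1}\|_{V^0}^2 + \|u^l-u^{l-1}\|_{V^0}^2 + 2h\nu\|A^{\frac12}u^l\|_{V^0}^2 &= 2h(\Pi\theta^{l-1}v_2,u^l) + 2(G(u^{l-1})\Delta_lW,u^l),\\
\|\theta^l\|_{H^0}^2 - \|\theta^{l-1}\|_{H^0}^2 + \|\theta^l-\theta^{l-1}\|_{H^0}^2 + 2h\kappa\|\AA^{\frac12}\theta^l\|_{H^0}^2 &= 2(\GG(\theta^{l-1})\Delta_l\WW,\theta^l).
\end{align*}
The coupling term is controlled by Cauchy--Schwarz and Young's inequality. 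The stochastic term is split via $(G(u^{l-1})\Delta_lW,u^l)=(G(u^{l-1})\Delta_lW,u^{l-1})+(G(u^{l-1})\Delta_lW,u^l-u^{l-1})$; the first piece is an $\cF_{t_{l-1}}$-martingale increment, while the second is absorbed using $2|(g,u^l-u^{l-1})|\leq \|u^l-u^{l-1}\|^2_{V^0}+\|g\|_{V^0}^2$ and the conditional It\^o isometry $\EE[\|G(u^{l-1})\Delta_lW\|_{V^0}^2|\cF_{t_{l-1}}]\leq h\,\mathrm{Tr}(Q)[K_0+K_1\|u^{l-1}\|_{V^0}^2]$. Since the temperature equation does not feed back to $u$ inside the scheme, I treat the $\theta^l$ estimate first (it is autonomous thanks to $\langle[u^{l-1}\cdot\nabla]\theta^l,\theta^l\rangle=0$), then use the resulting $L^{2^K}(\Omega;H^0)$ bound on $\theta^{l-1}$ to handle the coupling in the $u^l$ estimate. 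Summing over $l=1,\dots,L$, taking expectation, applying discrete Gronwall, and using the Burkholder--Davis--Gundy inequality on the martingale $\sum_l(G(u^{l-1})\Delta_lW,u^{l-1})$ to handle the $\max_L$, I obtain \eqref{mom_scheme_0}--\eqref{mom_scheme_1} for $K=1$.

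\emph{Induction on $K$.} Suppose the bounds hold at level $K-1$. Write the base energy identity as $X_l+R_l=X_{l-1}+Y_l+M_l$, where $X_l=\|u^l\|_{V^0}^2$, $R_l=\|u^l-u^{l-1}\|_{V^0}^2+2h\nu\|A^{\frac12}u^l\|_{V^0}^2$, $Y_l$ gathers the deterministic coupling and trace contributions, and $M_l$ is the martingale increment. Squaring produces
\[ X_l^2 + 2R_l X_{l-1} + R_l^2 \leq X_{l-1}^2 + 2X_{l-1}(Y_l+M_l) + 3(Y_l^2+M_l^2). \]
The cross term $2X_{l-1}M_l$ is again a martingale increment thanks to the $\cF_{t_{l-1}}$-measurability of $X_{l-1}$. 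After taking expectation one obtains an analogous inequality for $X_l^2=\|u^l\|_{V^0}^4$, whose right-hand side involves only $4$-powers of the data and of $\theta^{l-1}$, finite by the inductive hypothesis. Iterating this squaring $K$ times doubles the exponent at each step, producing $\|u^l\|_{V^0}^{2^K}$; the cumulated $X_{l-1}^{2^{K-1}-1}R_l$-type terms yield the second sum in \eqref{mom_scheme_1}. The same scheme handles $\theta^l$, with the factor $\sup_l\|u^{l-1}\|_{V^0}^{2^{K-1}}$ appearing in the transport-type control being finite by the velocity estimate already proved.

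\emph{Main obstacle.} The delicate point at each doubling step is the supremum-level control of the stochastic cross term: BDG applied to the martingale with increments $X_{l-1}^{2^{k-1}-1}M_l$ leaves a quadratic-variation factor that must be dominated by $\mathrm{Tr}(Q)$, $K_0$, $K_1$ and the $(2^{k-1})$-moments supplied by the previous level of induction, with constants independent of $N$. A small fraction of $\EE\max_L X_L^{2^k}$ is then moved to the left-hand side via Young, and discrete Gronwall closes the estimate. Equally care is required to combine the increments $\|u^l-u^{l-1}\|_{V^0}^2$ and $\|g\|_{V^0}^2$ in such a way that, after summation, the contribution of the latter generates only the admissible $h\,\mathrm{Tr}(Q)[K_0+K_1\|u^{l-1}\|_{V^0}^2]$ pattern.
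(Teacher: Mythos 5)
Your proposal follows essentially the same route as the paper's proof: the discrete energy identity from testing with $u^l$ and $\theta^l$ together with the antisymmetry relations \eqref{B} and \eqref{antisym-ut}, splitting the noise term into a martingale increment plus an absorbable part, discrete Gronwall and BDG for the $\max_L$, and an exponent-doubling induction (the paper multiplies the identity by $\|u^l\|_{V^0}^2$ and uses $a(a-b)=\tfrac12[a^2-b^2+|a-b|^2]$, which is the same device as your squaring of the identity). The only cosmetic differences are your sequential treatment of $\theta$ before $u$ (the paper handles both simultaneously) and the placement of $X_{l-1}$ versus $X_l$ in the dissipation cross term, neither of which changes the argument.
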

\begin{proof} 
 Write \eqref{Euler_u} with $\varphi=u^l$, \eqref{Euler_t} with $\psi=\theta^l$,  and use the identity 
 $(f,f-g) = \frac{1}{2} \big[ \|f\|_{\LL^2 }- \|g\|_{\LL^2}^2 
+ \| f-g\|_{\LL^2}^2\big]$. Using the Cauchy-Schwarz and Young inequalities, the antisymmetry \eqref{B} 
and the growth condition \eqref{growthG-0}, this yields for $l=1, ..., N$  
\begin{align}
\frac{1}{2} \big[ \|u^l\|_{V^0}^2 - \|u^{l-1}\|_{V^0}^2 + \| u^l-u^{l-1}\|_{V^0}^2 \big] + h \nu \|A^{\frac{1}{2}} u^l\|_{V^0}^2 =&
h ( \Pi \theta^{l-1} e_2, u^l) + \big( G(u^{l-1}) \Delta_l W, u^l), \label{dif_u}\\
\frac{1}{2} \big[ \|\theta^l\|_{H^0}^2 - \|\theta^{l-1}\|_{H^0}^2 + \| \theta^l-\theta^{l-1}\|_{H^0}^2 \big] + h \kappa \|\AA^{\frac{1}{2}} \theta^l\|_{H^0}^2 =&
 \big( \GG(\theta^{l-1}) \Delta_l \WW, \theta^l). \label{dif_theta}
\end{align} 
Fix $L=1, ...,N$ and add both equalities for $l=1, ...,L$; this yields
\begin{align}		\label{u^L-t^L}
\frac{1}{2} &\big[ \|u^L\|_{V^0}^2 - \|u_0\|_{V^0}^2 + \|\theta^L\|_{H^0}^2 - \|\theta_0\|_{H^0}^2 \big] + \frac{1}{2} \Big[ \sum_{l=1}^L \|u^l-u^{l-1}\|_{V^0}^2
+ \sum_{l=1}^L \| \theta^l-\theta^{l-1}\|_{H^0}^2\Big] 		\nonumber \\
&\; + h \sum_{l=1}^L \big[ \nu \|A^{\frac{1}{2}} u^l\|_{V^0}^2 + \kappa \|\AA^{\frac{1}{2}} \theta^l\|_{H^0}^2\big] \leq \frac{h}{2} \sum_{l=0}^{L-1}
\|\theta^l\|_{H_0}^2 + h \sum_{l=1}^{L-1} \|u^l\|_{V^0}^2 	+ h \|u^L\|_{V^0}^2 	\nonumber \\
&\; + \sum_{l=1}^L \big[ \|G(u^{l-1})\|_{{\mathcal L}(K;V^0)}^2 \|\Delta_l W\|_K^2 +  \frac{1}{4} \|u^l-u^{l-1}\|_{V^0}^2  \big] +
\sum_{l=1}^L \big( G(u^{l-1}) \Delta_l W , u^{l-1}\big) 		\nonumber \\
& \; + \sum_{l=1}^L \big[ \|\GG(\theta^{l-1})\|_{{\mathcal L}(\tilde{K};H^0)}^2 \|\Delta_l \WW\|_{\tilde K}^2 + \frac{1}{4}  \|\theta^l-\theta^{l-1}\|_{H^0}^2
  \big] +
\sum_{l=1}^L \big( \GG(\theta^{l-1}) \Delta_l \WW , \theta^{l-1}\big) 	.
\end{align}
Let $N$ be large enough to have $h=\frac{T}{N} \leq \frac{1}{8}$. Taking expected values, we deduce
\begin{align*}
\EE\Big( &\|u^L\|_{V^0}^2 + \|\theta^L\|_{H^0}^2 +\frac{1}{2}  \sum_{l=1}^L \big[ \|u^l-u^{l-1}\|_{V^0}^2 + \|\theta^l-\theta^{l-1}\|_{V^0}^2 \big]\\
& +2  h \sum_{l=1}^N \big[ \nu \|A^{\frac{1}{2}} u^l\|_{V^0}^2 + \kappa \|\AA^{\frac{1}{2}} \theta^l\|_{H^0}^2\big] \Big) \leq \EE\big( \|u^0\|_{V^0}^2
+ \|\theta^0\|_{H^0}^2\big)  + 2T \big[ K_0 {\rm Tr}(Q)  + \tilde{K}_0 {\rm Tr}(\QQ)\big]  \\
& + h \big[ 4+2\max(K_1 {\rm Tr}(Q) , \tilde{K}_1 {\rm Tr}(\QQ) \big] 
\sum_{l=0}^{L-1} \EE\big( \|u^l\|_{V^0}^2 + \|\theta^l\|_{H^0}^2\big) .
\end{align*} 
Neglecting both sums in the left hand side and using the discrete Gronwall lemma, we deduce 
\begin{equation}		\label{mom_K=1}
\sup_{1\leq L\leq N} \EE\big( \|u^L\|_{V^0}^2 + \|\theta^L\|_{H^0}^2 \big) \leq C,
\end{equation}
where 
\[ C= \Big( 2\EE\big( \|u^0\|_{V^0}^2
+ \|\theta^0\|_{H^0}^2\big)  + 2T \big[ K_0 {\rm Tr}(Q)  + \tilde{K}_0 {\rm Tr}(\QQ)\big] \Big) 
 e^{T\big[ 4 +2\max(K_1 {\rm Tr}(Q) , \tilde{K}_1 {\rm Tr}(\QQ)\big] }\]
 is independent of $N$. 
This implies
\[ \sup_{N\geq 1} \EE\Big( \sum_{l=1}^N \big[ \| A u^l\|_{V^0}^2 + \|\AA \theta^l\|_{H^0}^2 \big]  + \|u^l-u^{l-1}\|_{V^0}^2 + \|\theta^l-\theta^{l-1}\|_{H^0}^2;
\Big) <\infty\]
this proves \eqref{mom_scheme_1} %and \eqref{mom-increm-scheme} 
for $K=1$. 
For $s\in [t_j, t_{j+1})$, $j=0, ..., N-1$, set $\underline{s}=t_j$. The Davis inequality, and then the Cauchy-Schwarz and Young inequality imply for
any $\epsilon >0$
\begin{align}		\label{Davis-u^l-t^l}
\EE\Big( &\max_{1\leq L\leq N} \sum_{l=1}^L \big[ \big( G(u^{l-1})) \Delta_l W, u^{l-1}\big) + \big( \GG(\theta^{l-1} \Delta_l\WW, \theta^{l-1}\big) \big] \Big)
\nonumber \\
\leq & \; \EE\Big( \sup_{t\in [0,T]} \int_0^t \big( G(u^{\underline{s}}) dW(s) , u^{\underline{s}}\big) \Big) +
 \EE\Big( \sup_{t\in [0,T]} \int_0^t \big( \GG(\theta^{\underline{s}}) d\WW(s) , \theta^{\underline{s}}\big) \Big) 	\nonumber \\
 \leq & \; 3\EE\Big( \Big| \int_0^T \|G(u^{\underline{s}})\|_{{\mathcal L}(K;V^0)}^2 \|u^{\underline{s}}\|_{V^0}^2 {\rm Tr}(Q) ds \Big|^{\frac{1}{2}} \Big)
 +  3\EE\Big( \Big| \int_0^T \|\GG(\theta^{\underline{s}})\|_{{\mathcal L}(\tilde{K};H^0)}^2 \|\theta^{\underline{s}}\|_{H^0}^2 
 {\rm Tr}(\QQ) ds \Big|^{\frac{1}{2}} \Big)		\nonumber \\
 \leq &\;  3 {\rm Tr}(Q)^{\frac{1}{2}} \EE\Big[ \max_{1\leq l\leq N} \|u^l\|_{V^0} \Big(h\sum_{l=0}^N [K_0+K_1 \|u^{l-1}\|^2_{V^0} \big] \Big)^{\frac{1}{2}} \Big] 
 \nonumber \\
 &\quad + 3 {\rm Tr}(\QQ)^{\frac{1}{2}} \EE\Big[ \max_{1\leq l\leq N} \|\theta^l\|_{H^0} \Big(h\sum_{l=0}^N [\tilde{K}_0+\tilde{K}_1 
 \|\theta^{l-1}\|^2_{H^0} \big] \Big)^{\frac{1}{2}} \Big] 		\nonumber \\
 \leq & \; \epsilon \EE\Big( \max_{1\leq l\leq N} \|u^l\|_{V^0}^2\Big) + \EE\big( \|u^0\|_{V^0}^2 ) + \frac{9}{4\epsilon} {\rm Tr}(Q) \, h \sum_{l=1}^N
 \big[ K_0+K_1 \EE(\|u^{l-1}\|_{V^0}^2    )\big] 		\nonumber \\
& \quad  +   \epsilon \EE\Big( \max_{1\leq l\leq N} \|\theta^l\|_{H^0}^2\Big) + \EE\big( \|\theta ^0\|_{H^0}^2 ) 
+ \frac{9}{4\epsilon} {\rm Tr}(\QQ) \, h \sum_{l=1}^N \big[ \tilde{K}_0+\tilde{K}_1  \EE(\|\theta^{l-1} \|_{H^0}^2  )\big]
\end{align}
Taking the maximum over $L$ in \eqref{u^L-t^L} and using \eqref{Davis-u^l-t^l}, we deduce 
\begin{align*}
\EE\Big(& \max_{1\leq L\leq N} \big[ \|u^l\|_{V^0}^2 + \theta^l\|_{H^0}^2 \big] \Big) \leq 2\EE\big( \|u\|_{V^0}^2 + \| \theta^0\|_{H^0}^2 \big)
+ h \sum_{l=1}^N \big( \|\theta^{l-1}\|_{H^0}^2 + \|u^{l-1}\|_{V^0}^2  \big) \\
& + 2\epsilon \EE\Big( \max_{1\leq L\leq N} \big[ \|u^l\|_{V^0}^2 + \|\theta^L\|_{H^0}^2\big]\Big)
+  \frac{9}{4\epsilon}  {\rm Tr}(Q) \, h\, \sum_{l=1}^N \big[ K_0+K_1\EE( \|u^{l-1}\|_{V^0}^2 ) \big] \\
& +  \frac{9}{4\epsilon}  {\rm Tr}(\QQ) \, h\, \sum_{l=1}^N \big[ \tilde{K}_0+\tilde{K}_1\EE( \|\theta^{l-1}\|_{H^0}^2 ) \big]. 
\end{align*} 
For $\epsilon = \frac{1}{4}$,  \eqref{mom_K=1} proves
\[ \sup_{N\geq 1}\Big[  \EE\Big( \sup_{1\leq L\leq N} \|u^L\|_{V^0}^2\Big) + \EE\Big( \sup_{1\leq L\leq N} \|\theta^L\|_{H^0}^2 \Big)\Big] <\infty,\]
which proves \eqref{mom_scheme_0} for $K=1$.

We next prove \eqref{mom_scheme_0}--\eqref{mom_scheme_1} by induction on $K$. 

Multiply \eqref{dif_u} by $\|u^l\|_{V^0}^2$ and \eqref{dif_theta} by  $\|\theta^l\|_{H^0}^2$.  
Using the identity $a(a-b)= \frac{1}{2} \big[ a^2-b^2+|a-b|^2\big]$ for $a=\|u^l\|^2_{V^0}$ (resp.
$a=\|\theta^l\|_{H^0}^2$) and $b=\|u^{l-1}\|_{V^0}^2$ (resp. $b=\|\theta^{k-1}\|_{H^0}^2$), we deduce
for $l=1, ..., N$
\begin{align}		\label{u-t_K=2}
\frac{1}{4} \Big[& \|u^l\|_{V^0}^4 - \|u^{l-1}\|_{V^0}^4 + \big| \|u^l\|_{V^0}^2 - \|u^{l-1}\|_{V^0}^2 \big|^2 + 
\|\theta^l\|_{H^0}^4 - \|\theta^{l-1}\|_{H^0}^4 + \big| \|\theta^l\|_{H^0}^2 - \|\theta^{l-1}\|_{H^0}^2 \big|^2 \Big]		\nonumber \\
& + \frac{1}{2} \big[ \|u^l-u^{l-1}\|_{V_0}^2 \|u^l\|_{V^0}^2 + \| \theta^l-\theta^{l-1}\|_{H^0}^2 \|\theta^l\|_{H^0}^2 \big] + h \nu \|A^{\frac{1}{2}} u^l\|_{V^0}^2
\|u^l\|_{V^0}^2 		\nonumber \\
& + h \kappa \|\AA^{\frac{1}{2}} \theta^l\|_{H^0}^2 \|\theta^l\|_{H^0}^2  =  h \big( \Pi \theta^{l-1} v_2 , u^l\big) \|u^{l-1}\|_{V^0}^2 + \sum_{i=1}^4 T_i(l),
\end{align}
where
\begin{align*} 
 T_1(l)= & \big( G(u^{l-1}) \Delta_lW ,
u^{l-1}\big) \|u^l\|_{V^0}^2 , & 
T_2(l) = \big( G(u^{l-1})\Delta_l W, u^l-u^{l-1} \big) \|u^l\|_{V^0}^2 , \\
 T_3(l)= & \big( \GG(\theta^{l-1}) \Delta_l \WW , \theta^{l-1}\big) \|\theta^{l-1} \|_{H^0}^2, &	
T_4(l)=   \big( \GG(\theta^{l-1})\Delta_l \WW, \theta^l-\theta^{l-1} \big) \|\theta^l \|_{H^0}^2.
\end{align*} 
The Cauchy-Schwarz and Young inequalities imply
\begin{equation}	\label{T0l}
\big( \Pi \theta^{l-1} v_2, u^l\big) \|u^l\|_{V^0}^2 \leq \|\theta^{l-1}\|_{H^0} \|u^l\|_{V^0}^3 \leq \frac{1}{4} \|\theta^{l-1}\|_{H^0}^4 
+ \frac{3}{4} \|u^l\|_{V^0}^4.
\end{equation}
The Cauchy-Schwarz and Young inequalities imply for $\epsilon, \bar{\epsilon}>0$, 
\begin{align}		\label{T2l}
 |T_2(l)| &\leq  \|G(u^{l-1})\|_{{\mathcal L}(K;V^0)} \|u^l-u^{l-1}\|_{V^0}^2 \|u^l\|_{V^0}^2 	\nonumber \\
 & \leq \epsilon \|u^l-u^{l-1}\|_{V^0}^2 \|u^l\|_{V^0}^2 \nonumber \\
 &\quad + \frac{1}{4\epsilon} \|G(u^{l-1})\|_{{\mathcal L}(K;V^0)}^2 \|\Delta_l W\|_K^2 
\big[ \|u^{l-1}\|_{V^0}^2 + \big( \|u^l\|_{V^0}^2 - \|u^{l-1}\|_{V^0}^2 \big) \big]		\nonumber \\
&\leq \epsilon \|u^l-u^{l-1}\|_{V^0}^2 \|u^l\|_{V^0}^2 + \frac{1}{4\epsilon} \|G(u^{l-1})\|_{{\mathcal L}(K;V^0)}^2 \|\Delta_l W\|_K^2 
 \|u^{l-1}\|_{V^0}^2 		\nonumber \\
& \quad + \bar{\epsilon} \big| \|u^l\|_{V^0}^2 - \|u^{l-1}\|_{V^0}^2 \big|^2 + \frac{1}{16 \epsilon^2} \frac{1}{4\bar{\epsilon}} \|G(u^{l-1})\|_{{\mathcal L}(K;V^0)}^4
 \|\Delta_l W\|_K^4. 
\end{align}
A similar argument proves for $\epsilon, \bar{\epsilon}>0$ 
\begin{align}		\label{T4l}
 |T_4(l)| \leq  &\,  \epsilon \|\theta^l-\theta^{l-1}\|_{H^0}^2 \|\theta^l\|_{H^0}^2 + \frac{1}{4\epsilon} \|\GG(\theta^{l-1})\|_{{\mathcal L}(\tilde{K};H^0)}^2
 \|\Delta_l \WW\|_{\tilde{K}}^2   \|\theta^{l-1}\|_{H^0}^2 		\nonumber \\
& \quad + \bar{\epsilon} \big| \|\theta^l\|_{H^0}^2 - \|\theta^{l-1}\|_{H^0}^2 \big|^2 + \frac{1}{16 \epsilon^2} \frac{1}{4\bar{\epsilon}} 
\|\GG(\theta^{l-1})\|_{{\mathcal L}(\tilde{K};H^0)}^4
 \|\Delta_l \WW\|_{\tilde{K}}^4. 
\end{align}
A similar argument shows for $\bar{\epsilon}>0$ 
\begin{align}		\label{T1l}
|T_1(l)|\leq & \, \| G(u^{l-1})\Delta_l W\|_{V^0}   \|u^{l-1}\|_{V^0}^3 + \| G(u^{l-1})\Delta_l W\|_{V^0} \|u^{l-1}\|_{V^0} 
\big[ \|u^l\|_{V^0}^2 - \|u^{l-1}\|_{V^0}^2 \big]  		\nonumber \\
\leq & \frac{1}{4} \|G(u^{l-1})\|_{{\mathcal L}(K;V^0)}^4 \|\Delta_l W\|_K^4 + \frac{3}{4} \|u^{l-1}\|_{V^0}^4 
+ \bar{\epsilon} \big| \|u^l\|_{V^0}^2 - \|u^{l-1}\|_{V^0}\|^2 \big|^2 		\nonumber \\
&\quad + \frac{1}{4\bar{\epsilon}} \|G(u^{l-1})\|_{{\mathcal L}(K;V^0)}^2 \|\Delta_l W\|_K^2
\|u^{l-1}\|_{V^0}^2, 
\end{align} 
and
\begin{align}		\label{T3l}
|T_3(l)|\leq 
 & \frac{1}{4} \|\GG(\theta^{l-1})\|_{{\mathcal L}(\tilde{K}; H^0)}^4 \|\Delta_l \WW\|_K^4 + \frac{3}{4} \|\theta^{l-1}\|_{H^0}^4 
+ \bar{\epsilon} \big| \|\theta^l\|_{H^0}^2 - \|\theta^{l-1}\|_{H^0}\|^2 \big|^2 		\nonumber \\
&\quad + \frac{1}{4\bar{\epsilon}} \|\GG(\theta^{l-1})\|_{{\mathcal L}(\tilde{K};H^0)}^2 \|\Delta_l \WW\|_{\tilde{K}}^2
\|\theta^{l-1}\|_{H^0}^2.
\end{align} 
Add the inequalities \eqref{u-t_K=2}--\eqref{T3l} for $l=1$ to $L\leq N$,  choose $\epsilon=\frac{1}{4}$ and $\bar{\epsilon}=\frac{1}{16}$,  and
use the growth conditions \eqref{growthG-0} and \eqref{growthGG-0}. This yields
\begin{align}		\label{u^L-t^L-4}
\|u^L&\|_{V^0}^4 + \|\theta^L\|_{H^0}^4 + \frac{1}{2} \sum_{l=1}^L \big[  \big| \| u^l\|_{V^0}^2 -\|u^{l-1}\|_{V^0}^2\big|^2 +
 \|u^l-u^{l-1}\|_{V^0}^2 \|u^l\|_{V^0}^2  + \big| \|\theta^l\|_{H^0}^2 - \theta^{l-1}\|_{H^0}^2\big|^2 	\nonumber  \\
 &\; + \|\theta^l-\theta^{l-1} \|_{H^0}^2 \|\theta^l\|_{H^0}^2 \big] 
 +4 h\sum_{l=1}^L \big[ \nu \|A^{\frac{1}{2}} u^l\|_{V^0}^2 \|u^l\|_{V^0}^2 + \kappa \| \AA^{\frac{1}{2}} \theta^l\|_{H^0}^2 \|\theta^l\|_{H^0}^2 \big]  \nonumber \\
 \leq & \|u_0\|_{V^0}^4 + \|\theta_0\|_{H^0}^4 + \frac{1}{4} h \sum_{l=0}^{L-1} \|\theta^l\|_{H^0}^4 + \frac{3}{4} h  \sum_{l=1}^L \|u^l\|_{V^0}^4 \nonumber \\
 &\;
 + C\sum_{l=0}^L \Big(  \big[ K_0+K_1 \|u^{l-1}\|_{V^0}^2\big] \|u^{l-1}\|_{V^0}^2  \|\Delta_l W\|_K^2 +  \big[ K_0+K_1 \|u^{l-1}\|_{V^0}^2\big]^2  \|\Delta_l W\|_K^4
 \Big) 	\nonumber \\
 &\;  + C \sum_{l=0}^L \Big(  \big[ \tilde{K}_0+\tilde{K}_1 \|\theta ^{l-1}\|_{H^0}^2\big] \|\theta^{l-1}\|_{H^0}^2  \|\Delta_l \WW\|_{\tilde{K}}^2 
 +  \big[ \tilde{K}_0+\tilde{K}_1 \|\theta^{l-1}\|_{H^0}^2\big]^2  \|\Delta_l \WW\|_{\tilde{K}}^4
 \Big) . 
\end{align} 
Taking expected values, we deduce for every $L=1, ..., N$  and $h=\frac{T}{N}\leq 1$
\begin{align*}
\EE\Big( &\|u^L\|_{V^0}^4 + \|\theta^L\|_{H^0}^4 + \frac{1}{2} \sum_{l=1}^L \big[  \big| \| u^l\|_{V^0}^2 -\|u^{l-1}\|_{V^0}^2\big|^2 +
 \|u^l-u^{l-1}\|_{V^0}^2 \|u^l\|_{V^0}^2 \\ 
 &\quad  + \big| \|\theta^l\|_{H^0}^2 - \theta^{l-1}\|_{H^0}^2\big|^2 
 +  \|\theta^l-\theta^{l-1} \|_{H^0}^2 \|\theta^l\|_{H^0}^2 \big] \Big)\\
 &\quad  + \EE\Big( 
 4 h\sum_{l=1}^L \big[ \nu \|A^{\frac{1}{2}} u^l\|_{V^0}^2 \|u^l\|_{V^0}^2 + \kappa \| \AA^{\frac{1}{2}} \theta^l\|_{H^0}^2 \|\theta^l\|_{H^0}^2 \big] \Big) \\
 \leq & \; \EE(\|u_0\|_{V^0}^4 + \|\theta_0\|_{H^0}^4\big) +3 \,  h\,  \EE(\|u^L\|_{V^0}^4) + C+ C\, h\, \sum_{l=0}^{L-1} \big( \|u^l\|_{V^0}^4 +
  \|\theta^l\|_{H^0}^4\big)
\end{align*}
for some constant $C$ depending on $K_i, \tilde{K}_i, {\rm Tr}(Q), {\rm Tr}(\QQ)$ and $T$, and which does not depend on $N$. 
Let $N$ be large enough to have $3\, h<\frac{1}{2}$. Neglecting the sums in the left hand side and using the discrete Gronwall lemma, we deduce
for $\EE\big(\|u_0\|_{V^0}^4 + \|\theta_0\|_{H^0}^4\big) <\infty $
\begin{equation}		\label{max_Eut-4}
\sup_{N\geq 1} \max_{0\leq L\leq N}  \EE\Big( \|u^L\|_{V^0}^4 + \|\theta^L\|_{H^0}^4 \Big) <\infty, 
\end{equation}
which implies
\begin{equation}		\label{max_sum_ut-4}
\sup_{N\geq 1} \EE\Big( h\sum_{l=1}^N \big[ \| A^{\frac{1}{2}} u^l\|_{V^0}^2 \|u^l\|_{V^0}^2 + \|\AA^{\frac{1}{2}} \theta^l\|_{H_0}^2 \|\theta^l\|_{H^0}^2
\big] \Big) <\infty,
\end{equation}
that is \eqref{mom_scheme_1} for $K=2$. The argument used to prove \eqref{Davis-u^l-t^l} implies 
\begin{align*}
\EE\Big( \max_{1\leq L\leq N}& \sum_{l=1}^L \big( G(u^{l-1}) \Delta_l W , u^{l-1}\big) \|u^{l-1}\|_{V^0}^2 \Big) \\
 \leq &
\; \epsilon \EE\Big( \max_{1\leq L\leq N} \|u^L\|_{V^0}^4 \Big) + C(\epsilon) \Big[ 1+\max_{1\leq L\leq N} \EE(\|u^L\|_{V^0}^4) \Big]
\end{align*} 
and
\begin{align*}
\EE\Big( \max_{1\leq L\leq N}& \sum_{l=1}^L \big( \GG(\theta^{l-1}) \Delta_l \WW , \theta^{l-1}\big) \|u\theta^{l-1}\|_{H^0}^2 \Big) \\
 \leq &
\; \epsilon \EE\Big( \max_{1\leq L\leq N} \|\theta^L\|_{H^0}^4 \Big) + C(\epsilon) \Big[ 1+\max_{1\leq L\leq N} \EE(\|\theta^L\|_{H^0}^4) \Big]
\end{align*} 
Taking the maximum for $L=1, ...,N$ and using  \eqref{max_Eut-4}, we deduce \eqref{mom_scheme_0} for $K=2$. 
The details of the induction step, similar to the proof in the case $K=2$, are left to the reader.
\end{proof}

\section{Strong convergence of the localized implicit time Euler scheme}		\label{s-loc-converg}
Due to the bilinear terms $[u.\nabla]u$ and $[u.\nabla]\theta$, we first prove an $L^2(\Omega)$-convergence of the $\LL^2(D)$-norm of
the error,  uniformly on the time grid,  restricted to the set $\Omega_M(N)$ defined below for some $M>0$
\begin{equation} 		\label{def-A(M)}
\Omega_M(j):= \Big\{ \sup_{s\in [0,t_j]} \|A^{\frac{1}{2}} u(s)\|_{V^0}^2 \leq M\Big\} \cap \Big\{ \sup_{s\in [0,t_j]} \|\AA^{\frac{1}{2}} \theta(s)\|_{H^0}^2 \leq M\Big\},
\quad \forall j=0, ..., N,
\end{equation} 
and let $\Omega_M:= \Omega_M(N)$. 
Recall that for $j=0, ...,N$ set $e_j:= u(t_j)-u^j$ and $\tilde{e}_j:= \theta(t_j)-\theta^j$; then $e_0=\tilde{e}_0=0$. 
Using \eqref{def_u}, \eqref{def_t}, \eqref{Euler_u} and \eqref{Euler_t} we deduce for $j=1, ...,N$, $\phi\in V^1$ and $\psi\in H^1$
\begin{align}	\label{def_error_u}
\big( &e_j-e_{j-1}\, , \, \varphi \big) + \nu \!\int_{t_{j-1}}^{t_j} \! \!\big(  A^{\frac{1}{2}}  [ u(s) -   u^j ] ,  A^{\frac{1}{2}}  \varphi\big)  ds 
+ \int_{t_{j-1}}^{t_j} \!\! \big\langle B(u(s),u(s)) - B(u^j,u^j) ,  \varphi\big\rangle ds \nonumber \\
& =  \int_{t_j}^{t_{j+1}} \big( \Pi [\theta(s)-\theta^{j-1} ] v_2, \varphi\big) ds + 
 \int_{t_{j-1}}^{t_j} \big( [G(u(s))-G(u^{j-1}) ] dW(s) \, , \, \varphi\big),  
\end{align}
and
\begin{align}		\label{def_error_t}
\big( &\ee_j-\ee_{j-1}\, , \, \psi \big) + \kappa \!\int_{t_{j-1}}^{t_j} \! \! \big( \AA^{\frac{1}{2}}  [ \theta(s) -   \theta^j ] ,  \AA^{\frac{1}{2}}  \psi\big)  ds 
+ \int_{t_{j-1}}^{t_j} \!\! \big\langle [u(s).\nabla]\theta(s)  - [u^.\nabla]\theta^j] ,  \psi\big\rangle ds \nonumber \\
& =   \int_{t_{j-1}}^{t_j} \big( [\GG(\theta(s))-\GG(\theta^{j-1}) ] d\WW(s) \, , \, \psi\big).  
\end{align}
In this section, we will suppose that $N$ is large enough to have $h:=\frac{T}{N} \in (0,1)$. The following result is  a crucial step towards
the rate of convergence of the implicit time Euler scheme. 
\begin{prop}	\label{th_loc_cv}
Suppose that  the  conditions {\bf (C-u)} and {\bf (C-$\theta$)} hold. 
Let  $u_0\in L^{32+\epsilon}(\Omega ; V^1)$ and $\theta_0\in L^{32+\epsilon}(\Omega;H^1)$ for some $\epsilon>0$,
  $u,\theta$ be the solution to \eqref{def_u}--\eqref{def_t},
$\{u^j, \theta^j\}_{j=0, ..., N}$ be the solution to \eqref{Euler_u}--\eqref{Euler_t}. Fix $M>0$ and let $\Omega_M=\Omega_M(N)$ 
be defined by \eqref{def-A(M)}. Then 
 for $\eta\in (0,1)$, there exists a  positive constant  $C$,
 independent of $N$, such that  for $N$ large enough 
\begin{align} 		\label{loc_cv}
\EE\Big( &1_{\Omega_M} \Big[ \max_{1\leq j\leq N} \big( \|u(t_j)-u^j\|_{V^0}^2 + \|\theta(t_j)-\theta^j\|_{H^0}^2 \big)  + \frac{T}{N} \sum_{j=1}^N 
\big[ \|A^{\frac{1}{2}} [ u(t_j) - u^j] \|_{V^0}^2 \nonumber \\
&\quad + \|\AA^{\frac{1}{2}} [ \theta(t_j) - \theta^j] \|_{H^0}^2  \Big] \Big) 
\leq  C (1+M)   e^{{\mathcal C}(M) T} \Big( \frac{T}{N}\Big)^{\eta},
\end{align}
where 
\[  {\mathcal C}(M) =  \frac{9(1+\gamma) \bar{C}_4^2}{8}  \max\Big(\frac{5}{ \nu}  \, , 
 \, \frac{1}{\kappa}\Big) \, M\]  
for some $\gamma >0$, and $\bar{C}_4$ is the constant in the 
right hand side of the Gagliardo-Nirenberg inequality \eqref{GagNir}. 
\end{prop}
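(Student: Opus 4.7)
The plan is to test the error equations \eqref{def_error_u} and \eqref{def_error_t} against $e_j$ and $\tilde e_j$ respectively, to arrive at a discrete energy identity that can be closed by the discrete Gronwall lemma after localization on $\Omega_M$. Writing $u(s) - u^j = e_j - [u(t_j) - u(s)]$ in the viscous integral and using the identity $(f-g, f) = \frac{1}{2}(\|f\|^2 - \|g\|^2 + \|f - g\|^2)$, I would obtain, for each $j = 1, \ldots, N$, a discrete energy identity for $\frac{1}{2}\bigl[\|e_j\|_{V^0}^2 - \|e_{j-1}\|_{V^0}^2 + \|e_j - e_{j-1}\|_{V^0}^2\bigr] + \nu h \|A^{\frac{1}{2}} e_j\|_{V^0}^2$ plus a remainder involving $A^{\frac{1}{2}}[u(s) - u(t_j)]$ controlled by Proposition \ref{prop_regularity_H1}, equal to the contribution from the bilinear, buoyancy, and stochastic terms. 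An analogous identity holds for the temperature with $\kappa$ in place of $\nu$.

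The central difficulty is the bilinear term. I would decompose
\[
B(u(s), u(s)) - B(u^j, u^j) = B(u(s) - u(t_j), u(s)) + B(u(t_j), u(s) - u(t_j)) + B(e_j, u(t_j)) + B(u^j, e_j),
\]
and write the analogous four-term split for $[u(s).\nabla]\theta(s) - [u^j.\nabla]\theta^j$. Pairing with $e_j$ (resp.\ $\tilde e_j$), the last summand vanishes by \eqref{B} (resp.\ \eqref{antisym-ut}). The surviving genuinely quadratic error terms $\langle B(e_j, u(t_j)), e_j\rangle$ and $\langle [e_j.\nabla]\theta(t_j), \tilde e_j\rangle$ are estimated using the Gagliardo--Nirenberg inequality \eqref{GagNir}; for instance, on $\Omega_M$,
\[
|\langle B(e_j, u(t_j)), e_j\rangle| \leq \bar{C}_4^2 \|A^{\frac{1}{2}} u(t_j)\|_{V^0} \|A^{\frac{1}{2}} e_j\|_{V^0} \|e_j\|_{V^0} \leq \bar{C}_4^2 \sqrt{M}\, \|A^{\frac{1}{2}} e_j\|_{V^0} \|e_j\|_{V^0},
\]
since $\Omega_M \subset \Omega_M(j)$. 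A Young splitting with an auxiliary parameter $\gamma > 0$ absorbs a fraction of $\|A^{\frac{1}{2}} e_j\|_{V^0}^2$ on the left and leaves a coefficient proportional to $\frac{\bar{C}_4^2 M}{\nu} \|e_j\|_{V^0}^2$, which is the origin of the $\frac{1}{\nu}$ contribution to $\mathcal{C}(M)$; the temperature bilinear estimate yields the $\frac{1}{\kappa}$ contribution symmetrically. The crossed remainders $\langle B(u(s) - u(t_j), u(s)) + B(u(t_j), u(s) - u(t_j)), e_j\rangle$ (and their temperature counterparts) are bounded in the same way: one factor of $u(s) - u(t_j)$ or $\theta(s) - \theta(t_j)$ is transferred to the remainder, whose expected sum over $j$ is $O(h^\eta)$ by Proposition \ref{prop_regularity_H1}, while the remaining $\|A^{\frac{1}{2}} u(s)\|_{V^0}$ or $\|\tilde{A}^{\frac{1}{2}} \theta(s)\|_{H^0}$ factor is absorbed either by the localization on $\Omega_M$ or by Cauchy--Schwarz combined with the moment bounds \eqref{mom_u_V1} and \eqref{mom_t_H1}.

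The buoyancy coupling and the stochastic increments are then handled by standard steps: $\bigl|\int_{t_{j-1}}^{t_j}(\Pi[\theta(s) - \theta^{j-1}] v_2, e_j)\, ds\bigr|$ is dominated, via Cauchy--Schwarz and Young, by a small multiple of $h\|e_j\|_{V^0}^2$ plus $h\|\tilde e_{j-1}\|_{H^0}^2$ plus a time-regularity term, while the Lipschitz conditions \eqref{LipG} and \eqref{LipGG} together with It\^o's isometry bound the quadratic variation of the stochastic increments by constants times $h(\|e_{j-1}\|_{V^0}^2 + \|\tilde e_{j-1}\|_{H^0}^2)$ plus $O(h^{1+\eta})$ remainders after the splitting $u(s) - u^{j-1} = e_{j-1} + [u(s) - u(t_{j-1})]$ and an application of Proposition \ref{prop_regularity_L2}. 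The martingale contribution is controlled by the Davis inequality after taking the maximum in $j$, in the spirit of \eqref{Davis-u^l-t^l}.

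Summing the two identities for $j = 1, \ldots, L$, multiplying by $1_{\Omega_M}$ (using $1_{\Omega_M} \leq 1_{\Omega_M(j)}$ to legitimize the Gagliardo--Nirenberg step at each grid point), and taking expectations yields, for the quantity
\[
\varphi(L) := \mathbb{E}\Bigl(1_{\Omega_M}\Bigl[\max_{1\leq j\leq L}\bigl(\|e_j\|_{V^0}^2 + \|\tilde e_j\|_{H^0}^2\bigr) + h\sum_{j=1}^{L}\bigl(\|A^{\frac{1}{2}} e_j\|_{V^0}^2 + \|\tilde{A}^{\frac{1}{2}} \tilde e_j\|_{H^0}^2\bigr)\Bigr]\Bigr),
\]
an inequality of the form $\varphi(L) \leq C(1+M)\, h^\eta + \mathcal{C}(M)\, h \sum_{j=0}^{L-1} \varphi(j)$, and the discrete Gronwall lemma concludes. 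I expect the main obstacle to be the precise bookkeeping of the Young constants in the treatment of the four bilinear summands (and their temperature analogues), so that the combined coefficient in front of $\|A^{\frac{1}{2}} e_j\|_{V^0}^2$ and $\|\tilde{A}^{\frac{1}{2}} \tilde e_j\|_{H^0}^2$ stays strictly below $\nu$ and $\kappa$ respectively, which is exactly what the factor $5/\nu$ in the stated $\mathcal{C}(M)$ reflects.
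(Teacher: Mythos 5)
Your overall strategy coincides with the paper's: test the error equations against $e_j$ and $\tilde e_j$, use the identity $(f,f-g)=\frac12[\|f\|^2-\|g\|^2+\|f-g\|^2]$, split the bilinear differences into a genuinely quadratic error term (killed or estimated via \eqref{B}, \eqref{antisym-ut} and Gagliardo--Nirenberg on the localized set) plus time-increment remainders controlled by Propositions \ref{prop_regularity_L2} and \ref{prop_regularity_H1}, and close with the discrete Gronwall lemma. The bookkeeping of the Young parameters you anticipate is indeed where the constant ${\mathcal C}(M)$ comes from. (Minor point: the scheme \eqref{Euler_t} carries $[u^{j-1}.\nabla]\theta^j$, not $[u^{j}.\nabla]\theta^j$, so the quadratic temperature term is $\langle[e_{j-1}.\nabla]\theta(t_j),\tilde e_j\rangle$ and its Young splitting feeds back into \emph{both} $\nu\|A^{1/2}e_{j-1}\|_{V^0}^2$ and $\kappa\|\AA^{1/2}\tilde e_j\|_{H^0}^2$; this coupling, not symmetry, is what produces the factor $5/\nu$ versus $1/\kappa$.)

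There is, however, one genuine gap: the localization. You multiply the summed identity by the single global indicator $1_{\Omega_M}=1_{\Omega_M(N)}$. That indicator is ${\mathcal F}_T$-measurable, so $\EE\big(1_{\Omega_M}\sum_j\int_{t_{j-1}}^{t_j}([G(u(s))-G(u^{j-1})]dW(s),e_{j-1})\big)$ is not zero and cannot be treated as a martingale term. If you instead bound $1_{\Omega_M}\le 1$ and apply the Davis inequality to the unlocalized maximal sum, the resulting Young splitting produces $\lambda\,\EE(\max_j\|e_{j-1}\|_{V^0}^2)$ \emph{without} any indicator, which cannot be absorbed into the left-hand side $\EE(1_{\Omega_M}\max_j\|e_j\|_{V^0}^2)$ and is precisely the quantity the localization was introduced to avoid; the Gronwall loop does not close. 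The paper's proof resolves this by localizing with the adapted decreasing family $\Omega_M(j-1)$: since $1_{\Omega_M(j-1)}$ is ${\mathcal F}_{t_{j-1}}$-measurable, the local property of stochastic integrals keeps $1_{\Omega_M(j-1)}\int_{t_{j-1}}^{t_j}(\cdots)\,dW(s)$ a martingale increment, and the Davis estimate then yields the \emph{localized} maximum $\max_j 1_{\Omega_M(j-2)}\|e_{j-1}\|_{V^0}^2$, which can be absorbed. The price is that $\sum_j 1_{\Omega_M(j-1)}[\|e_j\|_{V^0}^2-\|e_{j-1}\|_{V^0}^2]$ no longer telescopes; one needs the Abel-summation argument exploiting $\Omega_M(j)\supset\Omega_M(j+1)$ (so the boundary terms have a favourable sign) to recover $\max_J 1_{\Omega_M(J-1)}\|e_J\|_{V^0}^2$ on the left, and only at the very end does one pass to $\Omega_M\subset\Omega_M(j-1)$ to obtain \eqref{loc_cv} as stated. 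Your proposal addresses neither the adaptedness of the indicator nor the loss of telescoping, and without both the argument fails at the stochastic terms.
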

\begin{proof} 
Write \eqref{def_error_u} with $\varphi = e_j$ and \eqref{def_error_t} with $\psi=\theta^j$; 
using the equality $(f,f-g)=\frac{1}{2}\big[ \|f\|_{\LL^2}^2 - \|g\|_{\LL^2}^2 + \|f-g\|_{\LL^2}^2\big]$,  we have for $j=1, ..., N$
\begin{align}	\label{increm_error_u}
\frac{1}{2} \big( \|e_j\|_{V^0}^2 - \|e_{j-1}\|_{V^0}^2\big) &+ \frac{1}{2} \|e_j-e_{j-1}\|_{V^0}^2 + \nu h \|A^{\frac{1}{2}}  e_j\|_{V^0}^2  
 \leq \sum_{l=1}^7 T_{j,l},  \\
\frac{1}{2} \big( \|\ee_j\|_{H^0}^2 - \|\ee_{j-1}\|_{H^0}^2\big) &+ \frac{1}{2} \|\ee_j-\ee_{j-1}\|_{H^0}^2 + 
\kappa h \|\AA^{\frac{1}{2}}  \ee_j\|_{H^0}^2   \leq \sum_{l=1}^6 \tilde{T}_{j,l},		\label{incrfem_error_t}
\end{align}
where by the antisymmetry property \eqref{B} we have
\begin{align*}
T_{j,1}=&-\int_{t_{j-1}}^{t_j} \!\! \big\langle B\big(e_j, u^j\big) \, , \, e_j\big\rangle ds = 
-\int _{t_{j-1}}^{t_j} \!\! \big\langle B\big(e_j, u(t_j)\big) \, , \, e_j\big\rangle ds , \\
T_{j,2}=&-\int_{t_{j-1}}^{t_j}\!\!  \big\langle B\big(u(s)-u(t_j)\, , u(t_j)\big) \, e_j\big\rangle ds, \\
T_{j,3}=&-  \int_{t_{j-1}}^{t_j}\!\!  \big\langle B\big( u(s), u(s)-u(t_j)\big) \, , \, e_j\big\rangle ds = 
\int_{t_{j-1}}^{t_j} \!\! \big\langle B\big( u(s), e_j\big)  \, , \, u(s)-u(t_j) \big\rangle ds, \\
T_{j,4}=&-\nu \!\int_{t_{j-1}}^{t_j}\!\! \! \big( A^{\frac{1}{2}}  ( u(s)-u(t_j)) ,  A^{\frac{1}{2}}  e_j\big) ds, \quad 
T_{j,5}= \int_{t_{j-1}}^{t_j} \!\! \big( \Pi [\theta(s) - \theta^{j-1}] v_2 \, , \, e_j\big) ds, \\
%T_{j,5}=& \int_{t_{j-1}}^{t_j} \!\! \big( \Pi [\theta(s) - \theta^{j-1}] v_2 \, , \, e_j\big) ds, \\
T_{j,6}=& \int_{t_{j-1}}^{t_j}\!\! \! \big([G(u(s))-G(u^{j-1})\big] dW(s)\, ,\,  e_j-e_{j-1}\big), \\
% \quad T_{j,7}= \int_{t_{j-1}}^{t_j}\!\! \! \big([G(u(s))-G(u^{j-1}) \big] dW(s), e_{j-1}\big)\\
T_{j,7}=& \int_{t_{j-1}}^{t_j}\!\!  \big([G(u(s))-G(u^{j-1}) \big] dW(s), e_{j-1}\big),
\end{align*}
and
\begin{align*}
\tilde{T}_{j,1}=&-\int_{t_{j-1}}^{t_j} \!\! \big\langle [e_{j-1}.\nabla] \theta^j  \, , \, \ee_j\big\rangle ds = 
-\int _{t_{j-1}}^{t_j} \!\! \big\langle  [e_{j-1}.\nabla] \theta(t_j) \, , \, \ee_j\big\rangle ds , \\
\tilde{T}_{j,2}=&-\int_{t_{j-1}}^{t_j}\!\!  \big\langle [(u(s)-u(t_{j-1}).\nabla ] \theta(t_j)   \, ,\, \ee_j\big\rangle ds,\\
%  = \int_{t_{j-1}}^{t_j}\!\!  \big\ [(u(s)-u(t_{j-1}).\nabla ] \ee_j  \, ,\, \theta(t_j) \big\rangle ds, \\
\tilde{T}_{j,3}=&-  \int_{t_{j-1}}^{t_j}\!\!  \big\langle [u(s).\nabla] (\theta(s)-\theta(t_j)  \, , \, \ee_j\big\rangle ds = 
\int_{t_{j-1}}^{t_j} \!\! \big\langle  [u(s).\nabla] \ee_j \, , \, (\theta(s)-\theta(t_j)  \big\rangle ds, \\
\tilde{T}_{j,4}=&-\nu \!\int_{t_{j-1}}^{t_j}\!\! \! \big( \AA^{\frac{1}{2}}  ( \theta(s)-\theta(t_j)) ,  \AA^{\frac{1}{2}}  \ee_j\big) ds, \\
%\quad T_{j,5}= \int_{t_{j-1}}^{t_j} \!\! \big( \Pi [\theta(s) - \theta^{j-1}] v_2 \, , \, e_j\big) ds, \\
\tilde{T}_{j,5}=& \int_{t_{j-1}}^{t_j}\!\! \! \big([\GG(\theta(s))-\GG(\theta^{j-1}) d\WW(s)\, ,\,  \ee_j-\ee_{j-1}\big), \\
% \quad T_{j,7}= \int_{t_{j-1}}^{t_j}\!\! \! \big([G(u(s))-G(u^{j-1}) \big] dW(s), e_{j-1}\big)\\
\tilde{T}_{j,6}=& \int_{t_{j-1}}^{t_j}\!\!  \big([G(u(s))-G(u^{j-1}) \big] dW(s), e_{j-1}\big),
\end{align*}
We next prove upper estimates of  the terms $T_{j,l}$ for $l=1, ...,5$, $\tilde{T}_{j,l}$ for $l=1, ..., 4$, and of the expected value of $T_{j,6}$, $T_{j,7}$
$\tilde{T}_{j,5}$ and $\tilde{T}_{j,6}$.

The H\"older and Young inequalities and the Gagliardo Nirenberg inequality \eqref{GagNir},
imply  for $\delta_1>0$
 \begin{align}		\label{maj_Tj1}
 |T_{j,1}|\leq & \, \bar{C}_4 \, h\, \|e_j\|_{V^0} \|A^{\frac{1}{2}} e_j\|_{V^0} \|A^{\frac{1}{2}} u(t_j)\|_{V^0}  \nonumber \\
 \leq & \, \delta_1\, \nu\,  h\,  \|A^{\frac{1}{2}} e_j\|_{V^0}^2 +\frac{ \bar{C}_4^2}{4\delta_1 \nu} \, h\,  \|A^{\frac{1}{2}} u(t_j)\|_{V^0}^2 \|e_j\|_{V^0}^2, 
  \end{align}
  and for $\tilde{\delta}_1, \delta_2>0$, 
  \begin{align}		\label{maj_TTj1}
 |\tilde{T}_{j,1}|\leq & \, \bar{C}_4  \, h\, \|A^{\frac{1}{2}} e_{j-1}\|_{V^0}^{\frac{1}{2}}  \| e_{j-1} \|_{V^0}^{\frac{1}{2}} 
  \| \AA^{\frac{1}{2}} \ee_j\|_{H^0}^{\frac{1}{2}}  \| \ee_j\|_{H^0}^{\frac{1}{2}}   \|\AA^{\frac{1}{2}} \theta(t_j)\|_{H^0}  \nonumber \\
 \leq & \, \delta_2\, \nu h \|A^{\frac{1}{2}} e_{j-1}\|_{V^0}^2 + \tilde{\delta}_1 h \kappa \|\AA^{\frac{1}{2}} \ee_j\|_{H^0}^2  \nonumber \\
 &\quad + 
 \frac{ \bar{C}_4^2}{16\delta_2 \nu} \, h\,  \|\AA^{\frac{1}{2}} \theta(t_j)\|_{H^0}^2 \|e_{j-1}\|_{V^0}^2
 + \frac{ \bar{C}_4^2}{16\tilde{\delta}_1 \kappa}\,  h\,  \|\AA^{\frac{1}{2}} \theta(t_j)\|_{H^0}^2 \|\ee_{j}\|_{H^0}^2.
  \end{align}
 H\"older's inequality and  the Sobolev embedding $V^1\subset \LL^4$ imply for $\delta_3 >0$
 \begin{align}		\label{maj_Tj2}
 | T_{j,2}|& \leq  C \int_{t_{j-1}}^{t_j} \|u(s)-u(t_j)\|_{V^1} \|A^{\frac{1}{2}} u(t_j)\|_{V^0} \|A^{\frac{1}{2}} e_j\|_{V^0}^{\frac{1}{2}} \|e_j\|_{V^0}^{\frac{1}{2}} ds
 \nonumber \\
 \leq & \; \delta_3 \nu \, h\, \|A^{\frac{1}{2}} e_j\|_{V^0}^2 +  h\, \|e_j\|_{V^0}^2 + \frac{C}{\sqrt{\nu \delta_3 } }
  \| A^{\frac{1}{2}} u(t_j)\|_{V^0}^2 \!  \int_{t_{j-1}}^{t_j}\!\!  \|u(t_j)-u(s)\|_{V^1}^2 ds,
 \end{align} 
 while for $\tilde{\delta}_2 >0$,  
 \begin{align}		\label{maj_TTj2} 
 | \tilde{T}_{j,2}|& \leq  %C \int_{t_{j-1}}^{t_j} \|u(s)-u(t_{j-1} )\|_{V^1} \|\AA^{\frac{1}{2}} \theta(t_j)\|_{H^0} 
 %\|\AA^{\frac{1}{2}} \ee_j\|_{H^0}^{\frac{1}{2}} \|\ee_j\|_{H^0}^{\frac{1}{2}} ds
 %\nonumber \\
% \leq & 
\; \tilde{\delta}_2 \kappa \, h\, \|\AA^{\frac{1}{2}} \ee_j\|_{H^0}^2 + \, h\, \|\ee_j\|_{H^0}^2 + \frac{C}{\kappa \tilde{\delta}_2 }
\int_{t_{j-1}}^{t_j}  \|A^{\frac{1}{2}}\big[ u(s)-u(t_{j-1})\big]\|_{V^0}^2 ds  \nonumber \\
&\quad +   \frac{C}{\kappa \tilde{\delta}_2 } \| \AA^{\frac{1}{2}} \theta(t_j)\|_{H^0}^4 \!  \int_{t_{j-1}}^{t_j}\!\!  \|u(s)-u(t_{j-1})\|_{V^0}^2 ds.
 \end{align} 
Similar arguments prove for $\delta_4, \tilde{\delta}_3>0$
\begin{align} 		\label{maj_Tj3}
|T_{j,3}|&\leq \delta_4 \nu h \|A^{\frac{1}{2}} e_j\|_{V^0}^2 + \frac{C}{\nu \delta_4} \sup_{s\in [0,T]} \|u(s)\|_{V^1}^2\int_{t_{j-1}}^{t_j} 
\|u(s)-u(t_j)\|_{V^1}^2 ds,\\
|\tilde{T}_{j,3}| & \leq \tilde{\delta}_3\, \kappa \, h\, \|\AA^{\frac{1}{2}} \ee_j\|_{H^0}^2 + \frac{C}{\kappa \tilde{\delta}_3} 
\sup_{s\in [0,T]} \|u(s)\|_{V^1}^2 \int_{t_{j-1}}^{t_j}  \|\theta(s)-\theta(t_j)\|_{H^1}^2 ds.			\label{maj_TTj3} 
\end{align} 
 The Cauchy-Schwarz  and Young inequalities imply for $\delta_5, \tilde{\delta}_4>0$
 \begin{align} 		\label{maj_Tj4}
|T_{j,4}|&\leq  \delta_5 \nu \, h\, \|A^{\frac{1}{2}} e_j\|_{V^0}^2 + \frac{\nu}{4\delta_5} \int_{t_{j-1}}^{t_j} \| A^{\frac{1}{2}} [ u(s)-u(t_j)]\|_{V^0}^2 ds,
 \\
|\tilde{T}_{j,4}|&\leq  \tilde{\delta}_4 \, \kappa \, h\, \|\AA^{\frac{1}{2}} \ee_j\|_{H^0}^2 + \frac{\kappa}{4 \tilde{\delta}_4}
 \int_{t_{j-1}}^{t_j} \| \AA^{\frac{1}{2}} [ \theta(s)-\theta(t_j)]\|_{H^0}^2 ds.
\label{maj_TTj4} 
 \end{align}
 Using once more the Cauchy-Schwarz and Young inequalities, we deduce
 \begin{align}		\label{maj_Tj5}
 |T_{j,5} | &\leq \int_{t_{j-1}}^{t_j} \big[ \|\theta(s)-\theta(t_{j-1}) \|_{H^0} + \| \ee_{j-1}\|_{H^0}\big] \, \|e_j\|_{V^0} ds  \nonumber \\
 &\leq \frac{h}{2}  \, \|e_j\|_{V^0}^2 + \frac{h}{2} \, \| \ee_{j-1}\|_{H^0}^2 + \frac{1}{2}  \int_{t_{j-1}}^{t_j}   \|\theta(s)-\theta(t_{j-1})\|_{H^0}^2  ds.
 \end{align}
 Note that the sequence of subsets $\{ \Omega_M(j)\}_{0\leq j\leq N}$ is decreasing. Therefore, since $e_0=\ee_0=0$, given $L=1, ..., N$, 
 we deduce 
 \begin{align*}
 \max_{1\leq J\leq L} &\sum_{j=1}^J   1_{\Omega_M({j-1})}  \big[ \|e_j\|_{V^0}^2 - \| e_{j-1}\|_{V^0}^2 + \|\ee_j\|_{H^0}^2  - \| \ee_{j-1}\|_{H^0}^2 \big]  \\
&\;  = \max_{1\leq J\leq L}  \sum_{j=2}^L \Big( 1_{\Omega_M({J-1})} \big[ \|e_J\|_{V^0}^2 + \|\ee_j\|_{H^0}^2 \big] \Big) \\
&\qquad  + \sum_{j=2}^L \big( 1_{\Omega_M(j-2)} - 1_{\Omega_M({j-1})}\big) \big[ \|e_{j-1}\|_{V^0}^2 + \|\ee_{j-1} \|_{H^0}^2 \big] \\
&\; \geq \max_{1\leq J\leq L}  \sum_{j=2}^L \Big( 1_{\Omega_M({J-1})} \big[ \|e_J\|_{V^0}^2 + \|\ee_j\|_{H^0}^2 \big] \Big).
 \end{align*} 
 Hence, for $\sum_{j=1}^5 \delta_j \leq \frac{1}{3}$ and $\sum_{j=1}^4 \tilde{\delta}_j<\frac{1}{3}$, using Young's inequality, we deduce
 for every $\alpha >0$
 \begin{align}		\label{maj_error-1}
 \frac{1}{6}& \max_{1\leq J\leq L}  \Big( 1_{\Omega_M({J-1})} \big[ \|e_J\|_{V^0}^2 + \|\ee_j\|_{H^0}^2 \big] \Big) +
 \frac{1}{6} \sum_{j=1}^L 1_{\Omega_M(j-1)} \big( \|e_j - e_{j-1}\|_{V^0}^2 + \|\ee_j-\ee_{j-1}\|_{H^0}^2 \big) 		\nonumber \\
 &\quad +  \sum_{j=1}^L 1_{\Omega_M(j-1)}\, h\,  \Big[ \nu \Big( \frac{1}{3}-\sum_{i=1}^5 \delta_i \Big) \|A^{\frac{1}{2}} e_j\|_{V^0}^2 
 + \kappa \Big( \frac{1}{3}-\sum_{i=1}^4 \tilde{\delta}_i \Big) \|\AA^{\frac{1}{2}} \ee_j\|_{H^0}^2  \Big]  		\nonumber \\
 & \leq \, h \sum_{j=1}^L 1_{\Omega_M(j-1)} \|e_j\|_{V^0}^2 \Big( \frac{(1+\alpha) \bar{C}_4^2 }{4\delta_1 \nu} \|A^{\frac{1}{2}} u(t_{j-1})\|_{V^0}^2 
  + \frac{(1+\alpha)\bar{C}_4^2}{16 \delta_2 \nu}  \| \AA^{\frac{1}{2}} \theta(t_{j-1})\|_{H^0}^2 +\frac{3}{2} \Big)		\nonumber \\
 &\qquad + h \sum_{j=1}^L 1_{\Omega_M(j-1)} \| \ee_j\|_{H^0}^2 \Big( \frac{(1+\alpha) \bar{C}_4^2}{16 \tilde{\delta}_1 \kappa} 
 \| \AA^{\frac{1}{2}} \theta(t_{j-1})\|_{H^0}^2 +\frac{3}{2}  \Big)  + Z_L 	\nonumber \\
 &\qquad + \max_{1\leq J\leq L} \sum_{j=1}^J 1_{\Omega_M(j-1)} \big[ T_{j,6}+\tilde{T}_{j,5}\big]  
 + \max_{1\leq J\leq L} \sum_{j=1}^J 1_{\Omega_M(j-1)} \big[ T_{j,7}+\tilde{T}_{j,6}\big],
 \end{align} 
 where 
 \begin{align}	\label{def_ZL}
 Z_L=&\; C\, h\, \sum_{j=1}^L \|e_j\|_{V^0}^2 \big( \|A^{\frac{1}{2}} [ u(t_j)-u(t_{j-1})] \|_{V^0}^2 + 
  \|\AA^{\frac{1}{2}} [ \theta(t_j)-\theta(t_{j-1})] \|_{H^0}^2 \big)	\nonumber \\
 &\quad + C\, h\, \sum_{j=1}^L \|\ee_j\|_{V^0}^2 \|\AA^{\frac{1}{2}} [ \theta(t_j)-\theta(t_{j-1})] \|_{H^0}^2  \nonumber \\
  &\quad + C \sum_{j=1}^L %1_{A_M(j-1)} C
 \Big( \sup_{s\in [0,T]} \|u(s)\|_{V^1}^2 +1\Big) \int_{t_{j-1}}^{t_j}\big[  \|u(t_j)-u(s)\|_{V^1}^2 + \|u(s)-u(t_{j-1}\|_{V^1}^2\big] ds \nonumber \\
 &\quad +  C \sum_{j=1}^L 
  \|\AA^{\frac{1}{2}}\theta(t_{j-1})\|_{H^0}^4 
 \int_{t_{j-1}}^{t_j} \|u(s)-u(t_{j-1})\|_{V^0}^2 ds \nonumber  \\
 &\quad + C  \sum_{j=1}^L %1_{A_M(j-1)} C
 \Big( \sup_{s\in [0,T]} \|u(s)\|_{V^1}^2 +1  \Big) 
 \int_{t_{j-1}}^{t_j} \|\theta(s)-\theta(t_j)\|_{H^1}^2 ds . 
 \end{align} 
 The Cauchy-Schwarz and Young inequalities imply 
 \begin{align}		\label{maj_Tj6}
 \sum_{j=1}^L &1_{\Omega_M(j-1)} |T_{j,6}|  \leq \frac{1}{6} \sum_{j=1}^L 1_{\Omega_M(j-1)}  \|e_j-e_{j-1}\|_{V^0}^2 \nonumber \\
 &+ \frac{3}{2} \sum_{j=1}^L 1_{\Omega_M(j-1)} \Big\|\int_{t_{j-1}}^{t_j} \big[ G(u(s))-G(u^{j-1})\big] dW(s)\Big\|_{V^0}^2 \\
  \sum_{j=1}^L &1_{\Omega_M(j-1)} |\tilde{T}_{j,5}|  \leq \frac{1}{6} \sum_{j=1}^L 1_{\Omega_M(j-1)}  \|\ee_j-\ee_{j-1}\|_{H^0}^2 \nonumber \\
 &+  \frac{3}{2}  \sum_{j=1}^L 1_{\Omega_M(j-1)} \Big\|\int_{t_{j-1}}^{t_j} \big[ \GG(\theta(s))-\GG(\theta^{j-1})\big] d\WW(s)\Big\|_{H^0}^2. 
 	\label{maj_TTj5}
 \end{align}
 Using the upper estimates \eqref{maj_error-1}--\eqref{maj_TTj5}, taking expected values, using the Cauchy-Schwarz and Young inequalities,
  and the inequalities \eqref{mom_u_L2}, \eqref{mom_t_L2},  \eqref{sup_E_t}, \eqref{mom_scheme_0},  \eqref{increm_u_L2}, 
 \eqref{mom_increm_u_1} and \eqref{mom_increm_t_1}, we deduce that for $\eta \in (0,1)$ and  every $L=1, ..., N$
 \begin{align}		\label{mom_ZL}
 \EE(&Z_L) \leq  \, C\Big\{ \EE\Big( \sup_{s\in [0,T]} \|u(s)\|_{V^0}^4 + \max_{0\leq j\leq N} \|u^j\|_{V^0}^4\Big) \Big\}^{\frac{1}{2}} \nonumber \\
 &\qquad \times 
 \Big\{ \EE \Big( \Big| h \sum_{j=1}^N \big( \|A^{\frac{1}{2}} \big[ u(t_j)-u(t_{j-1})\big] \|_{V^0}^2 
 + \|\AA^{\frac{1}{2}} \big[ \theta(t_j)-\theta(t_{j-1})\big] \|_{V^0}^2\big) \Big|^2 \Big) \Big\}^{\frac{1}{2}} 		\nonumber \\
 &\, + C\Big\{ \EE\Big( \sup_{s\in [0,T]} \|\theta(s)\|_{H^0}^4 + \max_{0\leq j\leq N} \|\theta^j\|_{H^0}^4\Big) \Big\}^{\frac{1}{2}} 
  \Big\{ \EE \Big( \Big| h \sum_{j=1}^N \|\AA^{\frac{1}{2}} \big[ \theta(t_j)-\theta(t_{j-1})\big] \|_{V^0}^2\big) \Big|^2 \Big\}^{\frac{1}{2}} 	
 \nonumber \\
&\, +C \Big\{ \EE\Big(1+  \sup_{0\leq s\leq T} \|u(s)\|_{V^1}^4 \Big) \Big\}^{\frac{1}{2}} \Big\{ \EE\Big( \Big| \sum_{j=1}^N \int_{t_{j-1}}^{t_j} 
\big[ \|u(s)-u(t_j)\|_{V^0}^2 \nonumber \\
&\qquad \qquad  + \|u(s)-u(t_{j-1} )\|_{V^0}^2 + \|\theta(s)-\theta(t_j)\|_{H^0}^2 \big] \Big|^2 \Big) \Big\}^{\frac{1}{2}} 	\nonumber \\
&\, +C \sum_{j=1}^N \int_{t_{j-1}}^{t_j} \big\{ \EE\big( \|\AA^{\frac{1}{2}} \theta(t_j)\|_{H^0}^8\big) \big\}^{\frac{1}{2}} 
\Big\{ \EE \big( \|u(s)-u(t_{j-1})\|_{V^0}^4 \big)\Big\}^{\frac{1}{2}}  ds \;
\leq \; C\,  h^\eta, 
 \end{align}
 for some constant $C$ independent of $L$ and $N$. 
 Furthermore, the Lipschitz  conditions \eqref{LipG} and \eqref{LipGG}, the inclusion $\Omega_M(j-1)\subset \Omega_M(j-2)$ for $j=2, ...,N$
 and the upper estimates \eqref{increm_u_L2} and \eqref{increm_t_L2} imply 
 \begin{align}		\label{mom_part_sumTj6}
 \EE\Big( & \sum_{j=1}^L 1_{\Omega_M(j-1)} \Big\|\int_{t_{j-1}}^{t_j} \big[ G(u(s))-G(u^{j-1})\big] dW(s)\Big\|_{V^0}^2 \big)	\nonumber \\
 &\;  \leq 
  \sum_{j=1}^L \EE\Big( \int_{t_{j-1}}^{t_j} 1_{\Omega_M(j-1)} L_1 \| u(s)-u^{j-1}\|_{V^0}^2  {\rm Tr}(Q) ds \Big)	\nonumber \\
  &\; \leq 2 L_1{\rm Tr}(Q)\, h\,  \sum_{j=2}^L \EE( 1_{\Omega_M(j-2)}\|e_{j-1} \|_{V^0}^2  \big)
  + C \sum_{j=1}^L \EE\Big( \int_{t_{j-1}}^{t_j} \|u(s)-u(t_{j-1})\|_{V^0}^2 ds\Big)\nonumber \\
  &\; \leq 2 L_1 {\rm Tr}(Q) \, h\,  \sum_{j=2}^L \EE( 1_{\Omega_M(j-2)}\|e_{j-1} \|_{V^0}^2  \big) + C h, \\
  \EE\Big(  \sum_{j=1}^L &1_{\Omega_M(j-1)} \Big\|\int_{t_{j-1}}^{t_j} \big[ \GG(\theta(s))-\GG(\theta^{j-1})\big] d\WW(s)\Big\|_{H^0}^2 \big)	\nonumber \\
 &\;  \leq 2 \tilde{L}_1 {\rm Tr}(\QQ)\, h\,  \sum_{j=2}^L \EE( 1_{\Omega_M(j-2)}\|\ee_{j-1} \|_{H^0}^2  \big) + C h.	\label{mom_part_sumTTj5}
 \end{align}
 Finally, the Davis inequality, the inclusion $\Omega_M(J-1)\subset \Omega_M(j-1)$ for $j\leq J$, the local property of stochastic integrals,
 the Lipschitz condition \eqref{LipG}, the Cauchy-Schwarz and Young inequalities, and the inequality \eqref{increm_u_L2} imply for $\lambda >0$
 \begin{align} 	\label{mom_max_Tj7}
 \EE\Big( & \max_{1\leq J\leq L} 1_{\Omega_M(J-1)} \sum_{j=1}^J T_{j7} \Big) 		\nonumber \\
 &\; \leq 3 \sum_{j=1}^L \EE\Big( \Big\{ 1_{\Omega_M(j-1)} \int_{t_{j-1}}^{t_j} 
  \|G(u(s))-G(u^{j-1})\|_{{\mathcal L}(K;V^0)}^2  {\rm Tr}(Q) \|e_{j-1}\|_{V^0}^2 ds \Big\}^{\frac{1}{2}} \Big)		\nonumber \\
 &\; \leq 3 \,\sum_{j=1}^L \EE\Big[ \Big(  \max_{1\leq j\leq L} 1_{\Omega_M(j-1)} \|e_{j-1}\|_{V^0} \Big) \Big\{ \int_{t_{j-1}}^{t_j} L_1
  {\rm Tr}(Q)  \|u(s)-u^{j-1}\|_{V^0}^2  ds
 \Big\}^{\frac{1}{2}} \Big) 		\nonumber \\
 &\; \leq  \lambda\EE \Big(  \max_{1\leq j\leq L} 1_{\Omega_M(j-1)} \|e_{j-1}\|_{V^0}^2 \Big) + 
 C \EE\Big( \sum_{j=1}^L \int_{t_{j-1}}^{t_j} L_1 {\rm Tr}(Q)  \|u(s)-u^{j-1}\|_{V^0}^2  ds \Big) 		\nonumber \\
 &\; \leq  \lambda\EE \Big(  \max_{1\leq j\leq L} 1_{\Omega_M(j-2)} \|e_{j-1}\|_{V^0}^2 \Big) +  C h \sum_{j=1}^L \EE(\|e_{j-1}\|_{V^0}^2) 
 + C\, h.
 \end{align} 
 A similar argument, using the Lipschitz condition \eqref{LipGG} and  \eqref{increm_t_L2},  yields for $\lambda >0$
 \begin{align} 	\label{mom_max_TTj6}
 \EE\Big( & \max_{1\leq J\leq L} 1_{\Omega_M(J-1)} \sum_{j=1}^J \tilde{T}_{j7} \Big) 	
 \leq  \lambda\EE \Big(  \max_{1\leq j\leq L} 1_{\Omega_M(j-2)} \|\ee_{j-1}\|_{H^0}^2 \Big) + C h \sum_{j=1}^L \EE(\|\tilde{e}_{j-1}\|_{V^0}^2)  
 + C h.
 \end{align}
 Collecting the upper estimates \eqref{maj_Tj1}--\eqref{mom_max_TTj6}  we obtain for $\sum_{i=1}^5 \delta_i<\frac{1}{3}$, 
 $\sum_{i=1}^4 \tilde{\delta}_i<\frac{1}{3}$, $\eta\in (0,1)$ 
 and $\alpha,\lambda >0$ 
 \begin{align}		\label{mom_max_L2-1} 
 \EE\Big( &\max_{1\leq J\leq N} 1_{\Omega_M(j-1)} \big[ \|e_j\|_{V^0}^2 + \| \ee_j\|_{H^0}^2 \big] \Big) 	\nonumber \\
 &\quad + \EE\Big( \sum_{j=1}^N 1_{\Omega_M(j-1)}
 \Big[ \nu\Big( 2-6\sum_{i=1}^5 \delta_i\Big)  \|A^{\frac{1}{2}} e_j\|_{V^0}^2
  + \kappa \Big( 2-6\sum_{i=1}^4 \tilde{\delta}_i\Big)  \|\AA^{\frac{1}{2}} \ee_j\|_{V^0}^2 \Big] \Big)		\nonumber \\
  \leq & \, h\, \sum_{j=1}^{N-1} \EE\Big( 1_{\Omega_M(j-1)} \|e_j\|_{V^0}^2 
 \Big[ \frac{3(1+\alpha) \bar{C}_4^2 }{2\nu} \Big( \frac{1}{\delta_1} + \frac{1}{4\delta_2}\Big) M
  +C  \Big]  \Big)		\nonumber \\
 % +9+12L_1 {\rm Tr}(Q) + 9\Big]  \Big)		\nonumber \\
  &\quad + h\, \sum_{j=1}^{N-1} \EE\Big( 1_{\Omega_M(j-1)} \|\ee_j\|_{H^0}^2 
  \Big[ \frac{3(1+\alpha) \bar{C}_4^2 }{8 \tilde{\delta_1} \kappa}  M  +C  \Big]  \Big)		\nonumber \\
  %+9+12\tilde{L}_1 {\rm Tr}(Q) + 9\Big]  \Big)		\nonumber \\
  &\quad + C(1+M) h \EE\Big( \sup_{t\in [0,T]} \big[ \|u(t)\|_{V^0}^2 + \|\theta(t)\|_{H^0}^2 \big] 
  + \max_{1\leq j\leq N} \big[ \|u^j\|_{V^0}^2 + \|\theta^j\|_{H^0}^2\big)  \big] \Big) 	\nonumber \\
&\quad   + 12 \lambda \EE\Big( \max_{1\leq j\leq N} 1_{\Omega_M(j-1)} \big[ \|e_{j-1}\|_{V^0}^2 + \| \ee_j\|_{H^0}^2 \big] \Big) + C h^\eta.
   \end{align} 
 Therefore, given $\gamma \in (0,1)$, choosing $\lambda \in (0, \frac{1}{12})$  and $\alpha >0$ such that $\frac{1+\alpha}{1-12\lambda} < 1+\gamma$, 
 neglecting the sum in the left hand side and using the discrete Gronwall lemma, we deduce for $\eta \in (0,1)$ 
 \begin{align}		\label{mom_loc_maxut_L2}
  \EE\Big( &\max_{1\leq J\leq N} 1_{\Omega_M(j-1)} \big[ \|e_j\|_{V^0}^2 + \| \ee_j\|_{H^0}^2 \big] \Big)  \leq C(1+M) e^{T {\mathcal C}(M)} h^\eta ,
 \end{align} 
 where 
 \[ {\mathcal C}(M):= \frac{3(1+\gamma) \bar{C}_4^2}{2}  \max\Big(\frac{1}{\delta_1 \nu} + \frac{1}{4\delta_2\nu } \, , 
 \, \frac{1}{4\tilde{\delta}_1 \kappa}\Big) \, M,  \]
  for $\sum_{i=1}^2 \delta_i <\frac{1}{3}$ and $\tilde{\delta}_1<\frac{1}{3}$ (and choosing $\delta_i, i=3, 4,5$ and $\tilde{\delta}_i, i=2,3,4$ such
 that $\sum_{i=1}^5 \delta_i < \frac{1}{3}$ and $\sum_{i=1}^4 \tilde{\delta}_i < \frac{1}{3}$).  Let $\delta_2<\frac{1}{15}$ and $\delta_1=4\delta_2$. 
 Then for some  $\gamma >0$, we have
 \[  {\mathcal C}(M) =  \frac{9(1+\gamma) \bar{C}_4^2}{8}  \max\Big(\frac{5}{ \nu}  \, , 
 \, \frac{1}{\kappa}\Big) \, M.  \]
Plugging the upper estimate \eqref{mom_loc_maxut_L2} in \eqref{mom_max_L2-1}, we conclude the proof of \eqref{loc_cv}. 
\end{proof}

\section{Rate of convergence in probability and in $L^2(\Omega)$} 		\label{sec_speed} 
In this section, we deduce from Proposition \ref{th_loc_cv} the convergence in probability of the implicit time Euler scheme with the ``optimal" 
rate of convergence
``almost 1/2" and a logarithmic speed of convergence in $L^2(\Omega)$. This notion has been introduced in \cite{Pri}. 
The presence of the bilinear term in the It\^o formula for
 $\|\AA^{\frac{1}{2}} \theta(t)\|_{H^0}^2$  does not enable us to prove exponential moments for this norm, which prevents from using
 the general framework presented in \cite{BeMi_FEM}  to prove a polynomial rate  for the strong convergence.
 \subsection{Rate of convergence in probability}
 In this section, we deduce the rate of convergence in probability from Propositions \ref{prop_u_V1}, \ref{lem_mom_t_H1}, \ref{prop_mom_scheme}
  and \ref{th_loc_cv}.  
 
 \noindent{\it Proof of Theorem \ref{th_cv_proba}}
%The following theorem is the main result of this paper.
%\begin{theorem} 		\label{th_cv_proba}
%Let the assumptions of Proposition \ref{th_loc_cv} be satisfied and let $e_j=u(t_j)-u^j$, $\ee_j=\theta(t_j)-\theta^j$ for $j=0, ..., N$.
% Then for every $\eta \in (0,1)$ we have
%\begin{equation}		\label{speed_proba} 
%\lim_{N\to \infty} P\Big( \max_{1\leq J\leq N} \big[ \|e_J\|_{V^0}^2 + \| \ee_J\|_{H^0}^2 \big] + \frac{T}{N} \sum_{j=1}^N \big[ \|A^{\frac{1}{2}} e_j\|_{V^0}^2
%+ \| \AA^{\frac{1}{2}} \ee_j \|_{H^0}^2 \big] \geq  N^{-\eta} \Big) = 0 .
%\end{equation} 
%\end{theorem} 
%\begin{proof}
For $N\geq 1$ and $\eta\in (0,1)$, let 
\[ A(N,\eta):=\Big\{ \max_{1\leq J\leq N} \big[ \|e_J\|_{V^0}^2 + \|\ee_J\|_{H^0}^2 \big] + \frac{T}{N} \sum_{j=1}^N \big[ \|A^{\frac{1}{2}} e_j\|_{V^0}^2
+ \| \AA^{\frac{1}{2}} \ee_j \|_{H^0}^2 \big] \geq  N^{-\eta}\Big\}.
\]
Let $\tilde{\eta}\in  (\eta,1)$, $M(N)=\ln(\ln N)$ for $N\geq 3$.  %and set
%\[ \Omega_1(N)= \Big\{ \sup_{t\in [0,T]} \| A^{\frac{1}{2}} u(t)\|_{V^0}^2 \leq M(N)\Big\} , \quad 
%\Omega_2(N)=\Big\{ \sup_{t\in [0,T]} \|\AA^{\frac{1}{2}} \theta(t)\|_{H^0}^2 \leq M(N)\Big\}.
% \]
%Then for $\beta\in (0,\frac{1}{2})$ we have
Then 
\[ P\big(A(N,\eta)\big) \leq   P\big( A(N,\eta) \cap \Omega_{M(N)} \big) + P \big( (\Omega_{M(N)})^c\big),\]
%\[ P\big(A(N,\eta)\big) =  P\big( A(N,\eta) \cap \Omega_1(N)\cap \Omega_2(N) \big)+ P\big(\Omega_1(N)^c\big) 
%+ P\big(\Omega_1(N)\cap \Omega_2(N)^c\big) \]
%Note that $\Omega_1(N) \cap \Omega_2(N) = \Omega_{M(N)}$, 
where $\Omega_{M(N)}= \Omega_{M(N)}(N)$ is defined in Proposition \ref{th_loc_cv}. 
The inequality \eqref{loc_cv}  implies 
\begin{align*}
 P\big( &A(N,\eta) \cap  \Omega_{M(N)} \big) \\ % \Omega_1(N)\cap \Omega_2(N) \big) \\
 &    \leq N^{\eta}  \;  \EE\Big( 1_{\Omega_{M(N)}}
 \Big[   \max_{1\leq J\leq N} \big[ \|e_J\|_{V^0}^2 + \|\ee_J\|_{H^0}^2 \big] + \frac{T}{N} \sum_{j=1}^N \big[ \|A^{\frac{1}{2}} e_j\|_{V^0}^2
+ \| \AA^{\frac{1}{2}} \ee_j \|_{H^0}^2 \big]  \Big]\Big) \\
&\leq N^{\eta} \; C \big[1+\ln(\ln N) \big] e^{T \tilde{C}  \ln(\ln N)} \Big( \frac{T}{N}\Big)^{\tilde{\eta}  } \\
&\leq C \big[1+\ln(\ln N) \big] \big(  \ln N\big)^{\tilde{C} T} N^{-\tilde{\eta}+\eta} \to 0 \quad {\rm as}\; N\to \infty.
\end{align*} 
The inequalities  \eqref{mom_u_V1}, \eqref{mom_t_L2} and \eqref{mom_t_H1}  imply
%\begin{align*}
%P\big(\Omega_1(N)^c\big) \leq & \frac{1}{M(N)} \EE \Big( \sup_{t\in [0,T]} \|u(t)\|_{V^1}^2\Big) \to 0 \quad {\rm as}\; N\to \infty.
%\end{align*} 
%Finally, the inequality \eqref{mom_t_H1} implies for $\beta\in (1,2)$ 
%\begin{align*}
%P(\big( \Omega_1(N)& \cap \Omega_2(N)^c \big) \leq \frac{1}{M(N)^4} 
%\EE\Big( 1_{\Omega_1(M(N))} \sup_{t\in [0,T]} \|\AA^{\frac{1}{2}} \theta(t)\|_{H^0}^8\Big) \\
%\leq & \; \frac{1}{M(N)^2}\,  C\, \big( 1+M(N)^{\beta}\big)  \leq C M(N)^{-(2-\beta)}  \to 0  \quad {\rm as}\; N\to \infty.
%\end{align*} 
 \begin{align*}
P\big(( \Omega_{M(N)})^c\big) \leq \frac{1}{M(N)} \EE \Big( \sup_{t\in [0,T]} \|u(t)\|_{V^1}^2 + \sup_{t\in [0,T]} \|\theta(t)\|_{H^1}^2 \Big) 
\to 0 \quad {\rm as}\; N\to \infty.
\end{align*}
The two above convergence results  complete the proof of \eqref{speed_proba}. 
%\end{proof}
\hfill $\Box$

\subsection{Rate of convergence in $L^2(\Omega)$} 
We finally prove the strong rate of convergence, which is also a consequence of   Propositions \ref{prop_u_V1}, \ref{lem_mom_t_H1},
\ref{prop_mom_scheme}  and \ref{th_loc_cv}. 	

 \noindent{\it Proof of Theorem \ref{th_strong_rate}}    %
% {\color{red} We finally prove  that
%that in case of  deterministic (or Gaussian) initial conditions $u_0$ and
%$\theta_0$, 
%the strong rate of convergence  of the implicit time Euler scheme is some negative exponent of $\ln N$.
%Note that if the initial conditions $u_0$ and $\theta_0$ are deterministic, or if their $V^1$ and $H^1$-norms have moments of all orders (for examples
%il they are Gaussian), therstrong rate of convergence is any   negative exponent of $\ln N$. }
%%%% More precisely we have the following
%result.
%\begin{theorem}		\label{th_strong_rate}
%Let $u_0\in L^{2^q+\epsilon}(\Omega;V^1)$ and $\theta_0\in L^{2^q+\epsilon}(\Omega;H^1)$ for $q\in [3,\infty)$ and some $\epsilon >0$. 
%Then for $\beta \in (1,+\infty)$  and $N$ large enough
%\begin{align}	\label{strong_rate}
%\EE\Big( \max_{1\leq J\leq N}& \big[ \|e_J\|_{V^0}^2 + \| \ee_J\|_{H^0}^2 \big] + \frac{T}{N} \sum_{j=1}^N \big[ \|A^{\frac{1}{2}} e_j\|_{V^0}^2
%+ \| \AA^{\frac{1}{2}} \ee_j\|_{H^0}^2
%%
%\Big) 		
% \leq C  \big( \ln (N) \big) ^{ - (2^{q-1}+1)}. 
%\end{align}
%\end{theorem}
%\begin{proof}
For any integer $N\geq 1$ and $M\in [1,+\infty)$,  
 let $\Omega_M=\Omega_M(N)$ be defined by \eqref{def-A(M)}.
%set 
%\[ \Omega_N^M= \{ \sup_{t\in [0,T]} \|A^{\frac{1}{2}} u(t)\|_{V^0}^2 \leq M\},\quad  {\rm and} \quad 
%\tilde{\Omega}_N^M = \{ \sup_{t\in [0,T]} \|\AA^{\frac{1}{2}}  \theta(t)\|_{H^0}^2 \leq M\}.\] 
Let $p$ be the conjugate exponent of $2^q$. H\"older's inequality implies 
\begin{align}	\label{strong-1}
 \EE\Big( & %1_{(\Omega_N^M \cap \tilde{\Omega}_N^M)^c} 
 1_{(\Omega_M)^c} \max_{1\leq J\leq N} \big[ \|e_J\|_{V^0}^2 + \|\ee_J\|_{H^0}^2 \big] \Big)  
 \leq  \Big\{ P\big(
 %\nonumber \\
 %\leq &\, \Big\{ P\big( %(\Omega_N^M)^c\big) + P\big( \Omega_N^M \cap (\tilde{\Omega}_N^M)^c
 (\Omega_M)^c \big) \Big\}^{\frac{1}{p}}     \nonumber \\
 &\quad \times \Big\{ \EE\Big( \sup_{s\in [0,T]} \|u(s)\|_{V^0}^{2^q} +  \sup_{s\in [0,T]} \|\theta(s)\|_{H^0}^{2^q} 		
+ \max_{1\leq j\leq N} \|u^j\|_{V^0}^{2^q} +  \max_{1\leq j\leq N} \|\theta^j\|_{H^0}^{2^q} \Big) \Big\}^{2^{-q}}		\nonumber \\
 \leq &\;  C \Big\{ P\big( (\Omega_M)^c \big)  \Big\}^{\frac{1}{p}} ,
 %(\Omega_N^M)^c\big) + P\big( \Omega_N^M \cap (\tilde{\Omega}_N^M)^c\big) \Big\}^{\frac{1}{p}} ,
\end{align}
where the last inequality is a consequence of \eqref{mom_u_L2}, \eqref{mom_t_L2} and \eqref{mom_scheme_0}.

Using  \eqref{mom_u_V1}%, \eqref{mom_t_L2} 	
and \eqref{mom_t_H1}  we deduce
 \begin{equation}		\label{strong-2}
P\big( (\Omega_M)^c\big) \leq M^{-2^{q-1}} \EE\big( \sup_{s\in [0,T]} \|u(s)\|_{V^1}^{2^q} +\sup_{s\in [0,T]} \|\theta(s)\|_{H^1}^{2^q} \Big) = C  M^{-2^{q-1}}.
\end{equation}
%while 
 %\eqref{mom_t_H1} implies
%\begin{equation} 		\label{strong-3}
%P\big( \Omega_N^M \cap (\tilde{\Omega} _N^M) ^c\big) \leq M^{-2^{q-2}} \EE\Big( 1_{\Omega_N^M} 
%\sup_{t\in [0,T]} \|\AA^{\frac{1}{2}} \theta(s)\|_{H^0}^{2^{q-1}} \Big)  \leq C M^\beta M^{-2^{q-2}}
%\end{equation} 
%for some $\beta \in (1,+\infty)$. 
Using \eqref{loc_cv}, we choose $M(N)\to \infty$ as $N\to \infty$ such that for $\eta \in (0,1)$ and  $\gamma>0$% and $\beta \in (1,+\infty)$ 
\[ N^{-\eta} \exp\Big[ \frac{9(1+\gamma) \bar{C}_4^2 T }{8}  \Big( \frac{5}{\nu} \vee \frac{1}{\kappa}\Big) M(N) \Big] M(N) \asymp M(N)^{-2^{q-1}},
 %M(N)^{-2^{q-2}+\beta} ,
\]  
which, taking logarithms, yields
\[ -\eta \ln(N) + \frac{9(1+\gamma) \bar{C}_4^2 T}{8}  \big( \frac{5}{\nu} \vee \frac{1}{\kappa}\big) M(N)  \asymp   -  2^{q-1}   \ln(M(N)).
\]
Set 
\begin{align*} M(N)= &\,  \frac{8}{9(1+\gamma) \bar{C}_4^2 \big( \frac{5}{\nu} \vee \frac{1}{\kappa}\big) T} 
\big[ \eta \ln(N) -  \big( 2^{q-1} +1\big) %+1-\beta) 
\ln\big( \ln(N)\big) \big] \\
\asymp &\, \frac{8}{9(1+\gamma) \bar{C}_4^2 \big( \frac{5}{\nu} \vee \frac{1}{\kappa}\big) T} \eta \ln(N). 
\end{align*} 
Then 
\begin{align*}
-\eta \ln(N) + \frac{9(1+\gamma) \bar{C}_4^2 T}{8}  \big( \frac{5}{\nu} \vee \frac{1}{\kappa}\big)  M(N) + \ln (M(N)) &\asymp  - \big( 2^{q-1} +1\big) 
%2^{q-2}+1-\beta\big) 
\ln\big( \ln(N)\big)
+ 0(1), \\
-%\big(2^{q-2}+1-\beta\big) 
\big( 2^{q-1} +1\big)  \ln\big( M(N)\big)& \asymp - %\big(2^{q-2}+1-\beta\big) 
\big( 2^{q-1} +1\big) \ln(N) + 0(1).
\end{align*} 
This implies 
\[ \EE\Big( \max_{1\leq J\leq N} \big[ \|e_J\|_{V^0}^2 + \|\ee_J\|_{H^0}^2 \big] \Big) \leq C \big( \ln(N)\big)^{-\big( 2^{q-1} +1)}. 
%{-(2^{q-2}+1-\beta)}.
\]
The inequalities \eqref{mom_u_V1}--\eqref{mom_t_H1} for $p=1$ and \eqref{mom_scheme_1} for $K=1$ imply 
 \[ \sup_{N\geq 1} \EE \Big( \frac{T}{N} \sum_{j=1}^N \big[ \| A^{\frac{1}{2}} u(t_j) \|_{V^0}^2 + \|A^{\frac{1}{2}} u^j\|_{V^0}^2 
 + \| \AA^{\frac{1}{2}} \theta(t_j)\|_{H^0}^2  + \| \AA^{\frac{1}{2}} \theta^j\|_{H^0}^2 \big] \Big) <\infty.
 \]
  Hence a similar argument 		%\end{proof} \end{document} 
 implies 
 \[\EE\Big( \frac{T}{N} \sum_{j=1}^N \big[ \|A^{\frac{1}{2}} e_j\|_{V^0}^2 + \|\AA^{\frac{1}{2}} \ee_j\|_{H^0}^2 \big] \Big) \leq 
  \ C \big( \ln(N)\big)^{ -( 2^{q-1} +1)}
  %-(2^{q-2}+1-\beta)}
  .\] 
This yields \eqref{strong_rate} and completes the proof. 
%\end{proof} 
\hfill $\Box$
\section{Conclusions} \label{sconclusion}

This paper provides the first result about the rate of convergence of a time discretization of the Navier-Stokes equations coupled with a transport equation  
for the temperature, driven by a random perturbation; this is the so-called Boussinesq/B\'enard  model. The perturbation may depend on both the velocity and  temperature of the a fluid. The rates of convergence in probability and in $L^2(\Omega)$ are similar to those obtained for the stochastic Navier-Stokes equations.  The Boussinesq equations model a variety of phenomena in environmental, geophysical, and climate systems (see e.g. \cite{Dij}  and \cite{DGS}).  Even if the outline of proof is similar to that used for the Navier-Stokes equations, 
 the interplay between the velocity and the temperature is more delicate to deal with in many places.
 This interplay, which appears in B\'enard systems,
 is crucial to describe more general hydrodynamical models.  The presence of the velocity in the bilinear term 
describing the  dynamics of the temperature makes more difficult
to prove bounds of moments for the $H^1$-norm of the temperature uniformly in time and requires higher moments of the initial condition.
 Such bounds are crucial to deduce rates
 of convergence (in probability and in $L^2(\Omega)$) from the localized one.

 This localized version of the convergence 
 is the usual first step in a  non linear (non Lipschitz and not monotonous) setting. 
 Numerical simulations, which are the ultimate aim of this study since there is no other way to "produce" trajectories of the solution,  
would require a space discretization, such as finite elements. This is not dealt with in this paper and will be done in a forthcoming work.
This new study is likely to provide results similar to those obtained for the 2D Navier-Stokes equations.
  
  Also note that another natural continuation of this work would be to consider a more general stochastic 2D magnetic B\'enard model (as discussed in 
  \cite{ChuMil}), which  describes the time evolution of the velocity, temperature and magnetic field of an incompressible fluid. 
  
  It would also be interesting to study the variance of the $L^2(D)$-norm of the error term, in both additive and  multiplicative settings, 
  for the Navier-Stokes equations and more general B\'enard systems.  This would give some information about the accuracy of the approximation.
   Proving  a.s. convergence of the scheme for B\'enard models is also a challenging question.

 \bigskip
  
 \noindent {\bf Acknowledgements.}   
The authors thank anonymous referees for valuable remarks. \\
 Annie~Millet's research has been conducted within the FP2M federation (CNRS FR 2036). 
\smallskip

\noindent{\bf Declarations.}

\noindent{\bf Funding.} Hakima Bessaih was partially supported by Simons Foundation grant: 582264 and NSF grant DMS: 2147189.

\noindent {\bf  Conflicts of interest.} The authors have no conflicts of interest to declare that are
 relevant to the content of this article.

\noindent{\bf  Availability of data and material.} Data sharing not applicable to this article as
no datasets were generated or analysed during the current study.

\end{document}